\documentclass{article}

\usepackage{amsmath}
\usepackage{amssymb}
\usepackage{amsthm}
\usepackage[backref]{hyperref}
\usepackage[linesnumbered,lined,ruled, noend]{algorithm2e}

\usepackage{algpseudocode}
\usepackage{enumerate}
\usepackage[numbers, sort]{natbib}

\usepackage{tikz}
\usepackage{pgfplots}
\pgfplotsset{compat=1.16}
\usetikzlibrary{arrows.meta}
\usetikzlibrary{backgrounds}
\usetikzlibrary{patterns}
\usepgfplotslibrary{patchplots}
\usepgfplotslibrary{fillbetween}
\pgfplotsset{%
    layers/standard/.define layer set={%
        background,axis background,axis grid,axis ticks,axis lines,axis tick labels,pre main,main,axis descriptions,axis foreground%
    }{
        grid style={/pgfplots/on layer=axis grid},%
        tick style={/pgfplots/on layer=axis ticks},%
        axis line style={/pgfplots/on layer=axis lines},%
        label style={/pgfplots/on layer=axis descriptions},%
        legend style={/pgfplots/on layer=axis descriptions},%
        title style={/pgfplots/on layer=axis descriptions},%
        colorbar style={/pgfplots/on layer=axis descriptions},%
        ticklabel style={/pgfplots/on layer=axis tick labels},%
        axis background@ style={/pgfplots/on layer=axis background},%
        3d box foreground style={/pgfplots/on layer=axis foreground},%
    },
}
\usepackage{adjustbox}
\usepackage{subcaption}

\theoremstyle{plain}\newtheorem{theorem}{Theorem}[section]
\theoremstyle{plain}\newtheorem{definition}[theorem]{Definition}
\theoremstyle{plain}\newtheorem{lemma}[theorem]{Lemma}
\theoremstyle{definition}\newtheorem{remark}[theorem]{Remark}
\theoremstyle{definition}\newtheorem{example}[theorem]{Example}
\theoremstyle{plain}\newtheorem{proposition}[theorem]{Proposition}
\theoremstyle{plain}
\theoremstyle{plain}\newtheorem{corollary}[theorem]{Corollary}
\theoremstyle{plain}\newtheorem{assumption}[theorem]{Assumption}

\DeclareMathOperator{\dom}{{dom\,}}

\DeclareMathOperator*{\argmin}{argmin}

\renewcommand{\equiv}{:=}

\newcommand{\set}[2]{\left\{#1\,\left|\,#2\right.\right\}}
\newcommand{\extre}{\overline{\mathbb{R}}}
\newcommand{\xbar}{\bar{x}}
\newcommand{\bbR}{\mathbb{R}}
\newcommand{\bbN}{\mathbb{N}}
\newcommand{\Dcal}{{\mathcal D}}

\newcommand{\Nbb}{\mathbb{N}}
\newcommand{\Mtilde}{{\widetilde{M}}}
\newcommand{\Mbar}{{\overline{M}}}

\begin{document}

\title{First-Order Analysis of Optimization in Uniformly Convex Metric Spaces: Directional Subderivatives and Basic Descent}
\author{D. Russell Luke
\thanks{Institute for Numerical and Applied Mathematics,
    Georg-August University of Göttingen, 37083 Göttingen, Germany. \texttt{r.luke@math.uni-goettingen.de} DRL was supported in part by the Deutsche Forschungsgemeinschaft (DFG, German Research Foundation) - Projektnummer 566257456 and 541767835.},
    Titus Pinta \thanks{ENSTA, 91120 Palaiseau, France. \texttt{titus.pinta@ensta.fr} TP was supported in part by the Fondation Mathématique Jacques Hadamard - Project-ID ANR-22-EXES-0013},~ and
    Qinyu Yan \thanks{Institute for Numerical and Applied Mathematics, Georg-August University of Göttingen. \texttt{qinyu.yan@math.uni-goettingen.de} QY was supported by the Deutsche Forschungsgemeinschaft (DFG, German Research Foundation) - Projektnummer 566257456.}
}
\date{\today}

\maketitle

\begin{abstract}
    We develop tools for the analysis and implementation of explicit first-order methods for minimizing functions in uniformly convex metric spaces.  We formulate sufficient conditions for convergence of descent sequences in terms of directional subderivatives, function values and iterates under regularity assumptions including boundedness, geodesic smoothness and a metric Polyak-Łojasiewicz property. We show that there exists a steepest descent direction in which the assumptions for convergence are satisfied. This work provides a foundation for first-order {\em explicit} algorithms for locally smooth, nonconvex optimization.
\end{abstract}


{\small \noindent {\bfseries 2020 Mathematics Subject Classification:}
    Primary
    49K27,       
    51K05,      
    90C48       
    

    Secondary 
    53C05,     
    51F99,      
    49M27       
}

\noindent {\bfseries Keywords:}
gradient flows, absolute gradients, metric slope, steepest descent, geodesic metric space, first-order optimality, rate of convergence, linear convergence
\bigskip

\section{Introduction}
The setting for our development is a uniquely geodesic, uniformly convex metric space, denoted $(G,d)$. Our central goal is to solve the following optimization problem
\begin{equation}\label{eq: model prb}
    \mathop{\mathrm{minimize}}\limits_{x\in G}\,f(x).
\end{equation}
An algorithmic model generates sequences $(x^k)_{k\in\bbN}$ by taking {\em descent steps} with respect to the function $f$ from the preceding iterate. This is made precise later, but for the moment it is helpful to introduce the notation
\begin{equation}\label{e:descent set vague}
    \Dcal_k:=\bigl\{ y\in G\setminus\{x^k\} \,\big|\, y\,\,\text{is a descent step} \bigr\}.
\end{equation}
The conceptual method is given in Algorithm \ref{alg: BD}.
\begin{algorithm}[ht]
    \caption{Basic descent (BD) method}\label{alg: BD}
    \SetKwInOut{Input}{Parameters}\SetKwInOut{Output}{Initialization}
    \Output{Set $k=0$ and select $x^{0}\in G$.}
    \While{$\Dcal_k\neq \emptyset$}{
        Choose a descent step $y^k\in\Dcal_k$ and step scaling $\lambda_k\in (0,1]$.\\
        Update $x^{k+1}=(1-\lambda_k)x^k\oplus\lambda_ky^k$ and $k=k+1$.
    }
\end{algorithm}
For geodesic metric spaces, in the absence of linear structure {\em implicit tools} using the {\em proximal mapping} \cite{Jost1995} are well-developed and provide notions of descent based on {\em gradient flows} \cite{DeGMarTos80, AmbGigSav2005}. This is applied to evolution systems (see for instance \cite{GigNob21}) and metric measure spaces (see for instance \cite{OhtaPal15}). Purely implicit methods based on proximal splitting are possible whenever the proximal mapping can be computed explicitly \cite{Bacak14, LauLuk21, BLL, LukMir26}. In order to be practical, implicit methods require an ``easy'' representation and implementation of the proximal mappings which, ultimately, are solution mappings to optimization problems. This is limiting, all the more restrictive since almost all of the known ``prox friendly'' functions are convex. Indeed, convexity plays a strong role in approaches based on gradient flows.

Gradient flows are built on two concepts: firstly, metric derivatives of absolutely continuous curves with values in the metric space, and secondly upper gradients of a functional defined in the metric space. Recent approaches pointing to an alternative to gradient flows have been studied on cubical complexes, Hadamard spaces and spaces with curvature bounded above \cite{GooLewLopNic26, GooLewLopNic_cat0cc, GooLewLopNicFoCM, LewLopNic_SIOPT24}. Our main contribution is an approach to notions of descent that are built on the same two concepts, but with a construction that is more appropriate for optimization in uniquely geodesic metric spaces and, moreover, is independent of convexity or notions of a subdifferential. This is demonstrated in Theorem \ref{th: main summary 2}, which provides a {\em first-order} theory for descent methods for smooth optimization. Our development of existence of steepest descent {\em geodesics}, Theorem \ref{th:suff descent - suff}, is distinct from an analogous development of existence of steepest descent curves in metric spaces \cite{DeGMarTos80}.

\subsection{Uniformly convex metric spaces}
We limit our discussion to a uniquely geodesic metric (length) space $(G,d)$, i.e. a metric space where for any $x_0,x_1\in G$ there exists a unique absolutely continuous path of minimal length satisfying $d(\gamma(t),\gamma(s))=|t-s|d(x,y)$ for all $t,s\in[0,1]$. We denote by $x_t:=\gamma(t)=(1-t)x_0\oplus tx_1$ for $t\in[0,1]$ the intermediate points on the geodesic $\gamma$. We may write $\gamma_{[x_0,x_1]}$ to indicate the starting point as well as the end point of the geodesic. We refer to \cite[Section 2]{BurBurIva} for more background.

Throughout this development $(G,d)$ will be a $p$-uniformly convex metric space with $p>1$ and $c>0$ \cite[Definition 2.1]{RuiLopNic15}, 
that is, it is uniquely geodesic and for all $x_0,x_1,y\in G$ and $t\in[0,1]$
\begin{equation}\label{eq: (p,c)-space}
    d(x_t,y)^p\leq (1-t)d(x_0,y)^p+td(x_1,y)^p-\frac{c}{2}t(1-t)d(x_0,x_1)^p,
\end{equation}
where $x_t:=(1-t)x_0\oplus tx_1$ (see also \cite[pp468]{BalCarLie94}). 
The constant $c$ plays no role in the abstract tools, and can be any positive real number, though the only cases that matter in other contexts are $c\in (0,2]$. 
Examples of $p$-uniformly convex spaces are CAT$(\kappa)$ spaces (locally, $p=2$, $c\in(0,2]$), Hadamard spaces ($p=c=2$),
$L^p$ spaces, and uniformly convex Banach spaces.  

The structure of $p$-uniformly convex spaces is used only in Proposition \ref{pr: SD method} to guarantee a sufficient 
decrease condition \eqref{eq: BD assm suff descent} for Algorithm \ref{alg: BD}.  
Sufficient decrease is guaranteed 
in directions of steepest descent which builds on notions of directional 
subderivatives and geodesic slopes;  all of these building blocks are well-defined in 
any uniquely geodesic metric space and do not require the structure of uniform convexity.  

Given $x\in G$, we denote by
\begin{subequations}\label{eq:(G_x,d_x)}
    \begin{equation}\label{eq:G_x}
        G_x:=\bigl\{ \gamma_{[x,y]} \,\big|\, y\in G \bigr\}
    \end{equation}
    the sets of all geodesics starting at $x$. The trivial geodesic $\gamma_{[x,x]}\in G_x$ is defined by $\gamma_{[x,x]}(t)=x$ for all $t\in[0,1]$. It is easily verified that the following mapping defines a metric on $G_x$:
    \begin{equation}\label{eq:d_x}
        d_x:G_x\times G_x\to\mathbb{R} \qquad (\gamma_1,\gamma_2)\mapsto d(\gamma_1(1),\gamma_2(1)).
    \end{equation}
\end{subequations}
Thus $(G_x,d_x)$ is a metric space. We will denote by $G_x^{\circ}:=G_x\setminus\{\gamma_{[x,x]}\}$ the set of all nontrivial geodesics starting at $x\in G$.

The space $(G_x,d_x)$ is not to be confused with a tangent cone at $x$ as defined in \cite[Definition 2.4]{GigNob21}. It is not difficult to show that $(G_x,d_x)$ has the same geometry as $(G,d)$. Indeed, $G_x$ can be viewed as a copy of $G$ but centered at $x$, so we can also call $G_x$ the centered space or the space $G$ centered at $x$. When $(G,d)$ is a $p$-uniformly convex metric space with constants $p$ and $c$, the centered metric space $(G_x,d_x)$ is also $p$-uniformly convex with the same constants, and the geodesic curve between $\gamma_{[x,y_0]},\gamma_{[x,y_1]}$ is given by
\begin{equation}\label{eq: geod btw geods}
    \gamma_t:=\gamma_{[x,y_t]} \quad\text{with}\quad y_t:=(1-t)y_0\oplus ty_1 \qquad\text{for all}\,\,t\in[0,1].
\end{equation}
The closed ball in $G$ centered at $x$ with radius $r>0$ is denoted $B_r(x):=\bigl\{ y\in G \,\big|\, d(x,y)\leq r \bigr\}$. The {\em geodesic interior} of a subset $V\subset G$ is the set
\begin{equation}\label{e:gi}
    \mathrm{gi}\,(V):=\bigl\{ x\in V \,\big|\, \forall\,y\in V\setminus\{x\},\,\exists\,\eta\in G_y^\circ\,\,\text{s.t.}\,\,x=\eta(t)\,\,\text{for}\,\,t\in (0,1) \bigr\}.
\end{equation}
The set of real numbers extended by infinity is denoted  $\extre:=\mathbb{R}\cup\{+\infty\}$, and for a function $f:G\to\extre$, the {\em effective domain} is the set of points where $f$ is finite: $\mathrm{dom}\,(f):=\bigl\{ x\in G \,\big|\, f(x)<+\infty \bigr\}$. For $\alpha\in\mathbb{R}$, the (lower) level-set of $f$ with level $\alpha$ is denoted $\mathrm{lev}_{\leq\alpha}(f):=\set{x\in G}{f(x)\leq\alpha}$, and a function $f:G\to\extre$ is {\em level-bounded} when $\mathrm{lev}_{\leq\alpha}(f)$ is bounded in $G$ for all $\alpha\in\mathbb{R}$. The function $f:G\to\extre$ is proper whenever there is some $\alpha$ for which the lower-level set is not empty. When $\inf_Gf$ is finite and attained, this is denoted $\min_{x\in G}\,f(x)$ and the set of points at which the minimum is attained is denoted $\mathrm{argmin}_G(f)$.

A function $f:G\to\extre$ is called {\em geodesically convex}, or just convex, whenever
$$
f(x_t)\leq (1-t)f(x_0)+tf(x_1) \qquad\text{for all}\,\,x_0,x_1\in G\,\,\text{and}\,\,t\in[0,1].
$$
The function $f$ is called {\em strongly geodesically convex}, or just strongly convex, with constant $\mu>0$, if for all $x_0,x_1\in G$ and all $t\in[0,1]$
\begin{equation}\label{eq: mu-cvx}
    f(x_t)\leq (1-t)f(x_0)+tf(x_1)-\frac{\mu}{2}t(1-t)d(x_0,x_1)^p.
\end{equation}
\subsection{Overview and main results}
With the notation and basic objects above, we can state the main results. Some objects in the statements, like subderivatives, are defined later.
\begin{theorem}[convergence of directional subderivatives]\label{th: main summary 1}
    Let $(G,d)$ be $p$-uniformly convex and $f:G\to\mathbb{R}$. Let $(x^k)_{k\in\bbN}$ be a sequence generated by Algorithm \ref{alg: BD} and let $(y^k,\lambda_k)_{k\in\bbN}$ be the corresponding sequence of descent steps and step scalings. Assume the following:
    \begin{enumerate}
        \item[(a)] $f$ has at least one minimizer on $U$, i.e. $\mathrm{argmin}_G(f)\neq\emptyset$;
        \item[(b)] $f$ has local uniform Lipschitz gradient along geodesics with constant $L>0$ on the convex subset $U$ with $\mathrm{argmin}_G(f)\subset\mathrm{gi}(U)$;
        \item[(c)] the descent steps and step scalings $(y^k, \lambda_k)\in U\times \mathbb{R}_+$ satisfy
        $$
        f((1-\lambda_k)x^k\oplus\lambda_k y^k)<f(x^k) \qquad \text{for all}\,\,k\in\bbN,
        $$
        and whenever Algorithm \ref{alg: BD} does not terminate
        $$
        \sum_{k=0}^{+\infty}\lambda^2_kd(x^k, y^k)^2<+\infty \qquad\text{while}\qquad \sum_{k=0}^{+\infty}\lambda_kd(x^k, y^k)=+\infty.
        $$
    \end{enumerate}
    Then at least one of the following two events must occur:
    \begin{enumerate}
        \item[(i)]\label{th: main summary 1i} the sequence $(x^k)_{k\in\bbN}$ converges finitely to a point $\xbar\in U$ where the directional subderivative (Definition \ref{de: geod slp/dir devt/norm dir devt}) is nonnegative:
        $$
        d^-f(\bar{x})[\gamma_{[\bar{x}, y]}]\geq 0 \qquad\text{for all}\,\,y\in U;
        $$
        \item[(ii)]\label{th: main summary 1ii} the sequence of directional subderivatives converges to zero:
        $$
        \lim_{k\to\infty} d^-f(x^k)[\gamma_{[x^k, y^k]}]=0.
        $$
    \end{enumerate}
    If the sequence terminates finitely at $\xbar$, that is case (i) occurs, and $f$ is convex,
    then $\xbar\in\mathrm{argmin}_G(f)$, i.e. $\bar{x}$ is a global minimizer of $f$.
\end{theorem}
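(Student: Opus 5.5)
We first dispose of the finite-termination case and then treat infinite sequences through a descent-lemma argument. Suppose the algorithm stops at some $x^K=:\xbar$. Then $\Dcal_K=\emptyset$, so no $y\in U\setminus\{\xbar\}$ yields $f((1-\lambda)\xbar\oplus\lambda y)<f(\xbar)$ for a step scaling $\lambda\in(0,1]$. We read this as: along every geodesic $\gamma_{[\xbar,y]}$ the difference quotients $\bigl(f(\gamma(t))-f(\xbar)\bigr)/t$ are nonnegative for all small $t>0$. Taking the $\liminf$ in Definition~\ref{de: geod slp/dir devt/norm dir devt} then gives $d^-f(\xbar)[\gamma_{[\xbar,y]}]\ge 0$, which is (i).

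If, in addition, $f$ is convex, fix any $y\in G$, and in particular a minimizer $x^*\in\mathrm{argmin}_G(f)$, which exists by (a). Convexity gives
\[
\frac{f(\gamma(t))-f(\xbar)}{t}\le f(y)-f(\xbar),
\]
so $0\le d^-f(\xbar)[\gamma_{[\xbar,y]}]\le f(y)-f(\xbar)$. With $y=x^*$ this yields $f(\xbar)\le f(x^*)$. We need $x^*\in U$ here, which holds because $\mathrm{argmin}_G(f)\subset\mathrm{gi}(U)\subset U$.

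Now suppose the algorithm never terminates, and write $\gamma_k:=\gamma_{[x^k,y^k]}$ and $\delta_k:=d(x^k,y^k)$. Assumption (b), local uniform Lipschitz gradient along geodesics, should give a descent-lemma type expansion along geodesics in $U$ (convexity of $U$ keeps $\gamma_k\subset U$):
\[
f(\gamma_k(\lambda))\le f(x^k)+\lambda\, d^-f(x^k)[\gamma_k]+\tfrac{L}{2}\lambda^2\delta_k^2 .
\]
Here the directional subderivative is the one-sided derivative along $\gamma_k$, scaled by the geodesic length through the parametrization on $[0,1]$. We will have to check carefully that the paper's normalization of $d^-f$ matches this form, and that the estimate indeed follows from the earlier lemma on Lipschitz gradients. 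Applying it with $\lambda=\lambda_k$ and summing telescopically gives
\[
\sum_k \lambda_k\bigl(-d^-f(x^k)[\gamma_k]\bigr)\le f(x^0)-\min f+\tfrac{L}{2}\sum_k\lambda_k^2\delta_k^2<\infty .
\]
Descent forces $-d^-f(x^k)[\gamma_k]\ge -\tfrac{L}{2}\lambda_k\delta_k^2$, so the negative part is controlled by the square-summable term. The divergence of $\sum_k\lambda_k\delta_k$ then yields $\liminf_k |d^-f(x^k)[\gamma_k]|/\delta_k=0$, that is, the normalized subderivative vanishes along a subsequence.

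The main obstacle is upgrading this $\liminf$ to a full limit in (ii). The plan is the standard argument in the style of Bertsekas for gradient methods with diminishing steps. We use the Lipschitz continuity of the directional derivative along the iterates, $|d^-f(x^{k+1})[\cdot]-d^-f(x^k)[\cdot]|\lesssim L\,\lambda_k\delta_k$, together with $\lambda_k\delta_k\to0$, which follows from square-summability. If the subderivatives returned above some $\epsilon$ infinitely often, the intervals where they cross from $\epsilon/2$ to $\epsilon$ would each consume a fixed amount of $\sum_k\lambda_k\delta_k$. Inside those intervals the summable series would then accumulate a positive contribution each time, contradicting summability. This step requires comparing subderivatives along different geodesics $\gamma_k,\gamma_{k+1}$ at nearby base points, for which we would use the centered spaces $G_x$ and the geodesics between geodesics \eqref{eq: geod btw geods}. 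We also need boundedness of $\delta_k$, which we would try to obtain from $y^k\in U$ and a bounded $U$ or level set.
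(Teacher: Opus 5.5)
The gap is the step you flag yourself, and it cannot be closed. Under the stated hypotheses the full limit in (ii) does not follow, so no Bertsekas-type argument can deliver it.

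Your comparison $|d^-f(x^{k+1})[\cdot]-d^-f(x^k)[\cdot]|\lesssim L\lambda_k\delta_k$ is available only for a \emph{fixed endpoint}. Since $x^{k+1}$ lies on $\gamma_{[x^k,y^k]}$, \eqref{eq: L-gradient} bounds $|Df(x^{k+1})[\gamma_{[x^{k+1},y^k]}]-Df(x^k)[\gamma_{[x^k,y^k]}]|$. But (ii) concerns $\gamma_{[x^{k+1},y^{k+1}]}$, and $y^{k+1}$ is an arbitrary element of $\Dcal_{k+1}$ unrelated to $y^k$. In Bertsekas--Tsitsiklis the quantity is $\|\nabla f(x^k)\|$, a Lipschitz function of the base point alone; here it depends on a freely chosen direction.

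Concretely, take $G=U=\bbR^2$, $f(x)=\tfrac12x_1^2$ (convex, $L=1$), and $x^0=(1,0)$. For even $k$ put $y^k=x^k+\bigl(-\tfrac{1}{10(k+1)},1\bigr)$ and $\lambda_k=\tfrac{1}{k+1}$; for odd $k$ put $y^k=x^k-(1,0)$ and $\lambda_k=2^{-k}/10$. Then $x^k_1$ decreases strictly and stays above $0.8$, since the total decrease is $\pi^2/80+1/15<0.2$. Hence every $y^k$ is a descent step, $f(x^{k+1})<f(x^k)$, the algorithm never terminates, $\sum_k\lambda_k^2d(x^k,y^k)^2<\infty$, and $\sum_k\lambda_kd(x^k,y^k)=\infty$. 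Yet $d^-f(x^k)[\gamma_{[x^k,y^k]}]=-x^k_1<-0.8$ for every odd $k$, while it tends to $0$ along even $k$. So neither (i) nor (ii) holds. Alternating the sign of the second component on even steps even keeps the iterates bounded.

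Up to that point your argument coincides with the paper's. The paper verifies Assumption~\ref{as: gen assm} via Theorem~\ref{th: BD with unif L-gradient}(i), i.e.\ the growth bound of Lemma~\ref{lm: bg bound}, which is your descent lemma with $M=L/2$. It then telescopes exactly as you do in Lemma~\ref{lm: proto descent}. From the divergence of $\sum_k\lambda_kd(x^k,y^k)$ it asserts $D^-f(x^k)[\gamma_{[x^k,y^k]}]\to0$, and then $d^-f(x^k)[\gamma_{[x^k,y^k]}]\to0$ with no bound on $d(x^k,y^k)$. That is the same jump.

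What either argument actually proves in the non-terminating case is your $\liminf_k|D^-f(x^k)[\gamma_{[x^k,y^k]}]|=0$. The corresponding statement for $d^-f$ holds only if $\sup_kd(x^k,y^k)<\infty$, which is not assumed: $U$ need not be bounded, and a bounded level set does not help since $y^k$ need not lie in it. A genuine limit would require tying $y^k$ to a point function such as $|D|f(x^k)$, with Lipschitz-type control of that function; neither is among the hypotheses.

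Two smaller remarks:
\begin{itemize}
\item With the paper's definition \eqref{e:descent set}, $\Dcal_K=\emptyset$ gives (i) immediately. Your reading (no $y,\lambda$ with strict decrease) is not implied by $\Dcal_K=\emptyset$. Even granting it, it only gives $S^-f(\xbar)\ge0$ for endpoints in $U$, not the $\liminf$ over nearby geodesics that defines $d^-f$.
\item $d^-f(x^k)[\gamma_{[x^k,y^k]}]<0$ holds by the definition of a descent step, so your lower Taylor bound for the negative part is unnecessary.
\end{itemize}
Your convex-case argument is correct, and more careful than the paper in explicitly using $\mathrm{argmin}_G(f)\subset U$.
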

\begin{theorem}[convergence of first-order methods for smooth optimization]\label{th: main summary 2}
    Let $(G,d)$ be $p$-uniformly convex and $f:G\to\mathbb{R}$. Let $(x^k)_{k\in\bbN}$ be a sequence generated by Algorithm \ref{alg: BD}. Assume the following:
    \begin{enumerate}
        \item[(a)] the sequence $(x^k)_{k\in\bbN}\subset U$, a convex subset of $G$, and $f$ possesses at least one local minimum, $x^*$, on $U$;
        \item[(b)] the sequence $(x^k)_{k\in\bbN}$ satisfies the {\em sufficient decrease condition} relative to the steepness $|D|f$ (Definition \ref{d:steepness}): there exists $\delta>0$ such that
        \begin{equation}\label{eq: BD assm suff descent}
            f(x^{k+1})-f(x^k)\leq-\frac{1}{\delta}|D|f(x^k)^{\frac{2}{p-1}} \qquad\text{for all}\,\,k\in\bbN;
        \end{equation}
        \item[(c)] $f$ satisfies the Polyak-Łojasiewicz property \eqref{eq: PL} at the local minimum $x^*$ with constant $C\in(0,\delta)$ for the neighborhood $U$.
    \end{enumerate}
    Then the sequence of function values $(f(x^k))_{k\in\bbN}$ converges at least linearly to the local minimum $f_*=f(x^*)$ with
    $$
    f(x^k)-f_*\leq\Bigl(f(x^0)-f_*\Bigr)\left(1-\tfrac{C}{\delta}\right)^k \qquad\text{for all}\,\,k\in\bbN.
    $$
    In addition to the assumptions above, let the metric space $(G,d)$ be complete, and suppose that
    \begin{enumerate}
        \item[(d)] $f$ has local uniform Lipschitz gradient along geodesics with constant $L>0$ on $U$, and
        \item[(e)] the sequence $(x^k)_{k\in\bbN}$ is stable, i.e.
        \begin{equation}\label{e:suff iterate descent}
            d(x^{k+1},x^k)\leq\frac{1}{\sqrt{\delta}}|D|f(x^k)^{\frac{1}{p-1}} \qquad\text{for all}\,\,k\in\bbN.
        \end{equation}
    \end{enumerate}
    Then the sequence of iterates $(x^k)$ converges at least linearly to a local minimizer $\xbar\in\mathrm{argmin}_U(f)$ with
    $$
    d(x^k,\xbar)\leq\frac{\sqrt{f(x^0)-f_*}}{1-\sqrt{1-\tfrac{C}{\delta}}}\left(\sqrt{1-\tfrac{C}{\delta}}\right)^{k} \qquad\text{for all}\,\,k.
    $$
\end{theorem}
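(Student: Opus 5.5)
The proof splits into two parts. The first combines the sufficient decrease condition (b) with the Polyak-Łojasiewicz inequality (c) to get a one-step contraction of $f(x^k)-f_*$. The second converts this into a geometric bound on the step lengths via the stability condition (e), and then uses completeness.

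\emph{Function values.} I read \eqref{eq: PL} in the form
\[
C\bigl(f(x)-f_*\bigr)\le |D|f(x)^{\frac{2}{p-1}} \quad\text{for all } x\in U,
\]
with $f_*=f(x^*)\le f(x)$ on $U$; this should be checked against the definition given later. Since $(x^k)\subset U$ by (a), subtracting $f_*$ from both sides of \eqref{eq: BD assm suff descent} and then applying the PL bound gives
\[
f(x^{k+1})-f_*\le f(x^k)-f_*-\tfrac{1}{\delta}|D|f(x^k)^{\frac{2}{p-1}}\le \Bigl(1-\tfrac{C}{\delta}\Bigr)\bigl(f(x^k)-f_*\bigr).
\]
Because $C\in(0,\delta)$, the factor lies in $(0,1)$. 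Induction on $k$ yields the stated linear rate. Here I use $f(x^k)-f_*\ge 0$, which holds because $x^k\in U$ and $x^*$ minimizes $f$ on the neighbourhood $U$ in the PL statement.

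\emph{Iterates.} Put $q:=\sqrt{1-C/\delta}$. By (e), then by \eqref{eq: BD assm suff descent} rewritten as $|D|f(x^k)^{\frac{2}{p-1}}\le \delta\bigl(f(x^k)-f(x^{k+1})\bigr)$,
\[
d(x^{k+1},x^k)\le \delta^{-1/2}|D|f(x^k)^{\frac{1}{p-1}}\le \sqrt{f(x^k)-f(x^{k+1})}\le\sqrt{f(x^k)-f_*}\le \sqrt{f(x^0)-f_*}\,q^k .
\]
The third inequality uses $f(x^{k+1})\ge f_*$. By the triangle inequality, $d(x^m,x^k)\le \sqrt{f(x^0)-f_*}\,q^k/(1-q)$ for all $m>k$. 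Hence $(x^k)$ is Cauchy, and completeness gives a limit $\xbar$. Letting $m\to\infty$ gives exactly the claimed bound on $d(x^k,\xbar)$.

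\emph{Main obstacle: $\xbar\in\mathrm{argmin}_U(f)$.} We have $f(x^k)\to f_*$. It remains to show that $\xbar\in U$ and $f(\xbar)=f_*$; this is where (d) enters. I would first show that a local uniform Lipschitz gradient along geodesics forces $f$ to be locally Lipschitz, hence continuous, on $U$. The idea is to bound $|f(y)-f(x)|$ by integrating the directional derivative along $\gamma_{[x,y]}$; its magnitude is controlled near a base point by $L$ and the slope there. Continuity then gives $f(\xbar)=\lim f(x^k)=f_*$. Membership $\xbar\in U$ needs either closedness of $U$, which I would assume or obtain by passing to a closed convex neighbourhood on which (c) and (d) still hold, or that $\xbar$ lies in the geodesic interior of $U$. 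Once $\xbar\in U$ and $f(\xbar)=f_*\le f$ on $U$, $\xbar$ is a local minimizer on $U$, which completes the proof.
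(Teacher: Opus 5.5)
Your proof is correct and is essentially the paper's own. It follows Theorem~\ref{th: cvg func value} (sufficient decrease combined with (c), then iterated), then Theorem~\ref{th: cvg ite} ($d(x^{k+1},x^k)^2\le f(x^k)-f(x^{k+1})\le f(x^k)-f_*$, a geometric Cauchy bound, completeness), and finally uses continuity of $f$ to get $f(\xbar)=f_*$. That continuity need not be derived, since (d) presupposes $f\in C^1(U)\subset C(U)$.

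Your caution about $\xbar\in U$ is justified. The paper's proof just asserts completeness of $(U,d)$, which tacitly requires $U$ to be closed; this is assumed in Theorem~\ref{th: cvg func value} but not in this statement. Without it the last claim genuinely fails. In $\bbR^2$ with $p=2$, the data $f(x)=\max\{x_2,0\}^2$, $U=\{x\mid x_2>x_1-\tfrac12\}$, $x^*=0$, $x^0=(\tfrac12,\tfrac25)$, $y^k=(\tfrac12,0)$, $\lambda_k=\tfrac12$, $\delta=8$, $C=4$, $L=2$ satisfy (a)--(e), yet $x^k\to(\tfrac12,0)\notin U$. So adding closedness of $U$ is the right repair.
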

\begin{theorem}[steepest descent directions and sufficient decrease]\label{th:suff descent - suff}
    Let $(G,d)$ be boundedly compact, $p$-uniformly convex with $p\in(1,2]$. Let $(x^k)_{k\in\bbN}$ be a sequence generated by Algorithm \ref{alg: BD} and let $(y^k,\lambda_k)_{k\in\bbN}$ be the corresponding sequence of descent steps and step scalings. Assume that
    \begin{enumerate}[(i)]
        \item $f:G\to\mathbb{R}$ is level-bounded and has local uniform Lipschitz gradient along geodesics with constant $L>0$ on the convex subset $U$;
        \item $\mathrm{lev}_{\leq f(x^0)}(f)\subset \mathrm{gi}\,(U)$ with distance at least $\rho>0$ to the boundary of $U$: i.e. $\bigcup_{x\in \mathrm{lev}_{\leq f(x^0)}(f)} B_\rho(x)\subset \mathrm{gi}\,(U)$;
        \item in each step $k$ with $\mathcal{D}_k\neq\emptyset$ the descent step $y^k\in\mathcal{D}_k$ is a {\em steepest descent step} with $d(x^k,y^k)=\rho$, i.e.
        $$
        Df(x^k)[\gamma_{[x^k,y^k]}]=-|D|f(x^k) \quad\text{and}\quad d(x^k,y^k)=\rho.
        $$
    \end{enumerate}
    Then there exists a uniform constant $\underline{\delta}>0$ such that for all $\delta\geq\underline{\delta}$ and all $k$, $[\alpha_k,\beta_k]\cap(0,1]\neq\emptyset$ where $\alpha_k=\alpha(x^k, y^k)$ and $\beta_k=\beta(x^k, y^k)$ are given in \eqref{eq: sd condition}, and whenever
    \begin{equation}\label{eq: admissable lambda}
        \lambda_k\in [\alpha_k,\beta_k]\cap(0,1],
    \end{equation}
    the {\em sufficient decrease condition} \eqref{eq: BD assm suff descent} holds.
\end{theorem}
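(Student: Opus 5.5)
The plan is to reduce everything to a one-dimensional estimate along the steepest descent geodesic $\gamma_k:=\gamma_{[x^k,y^k]}$ and then choose $\underline{\delta}$ uniformly using compactness. Hypotheses (i)–(ii) together with bounded compactness make $\mathrm{lev}_{\leq f(x^0)}(f)$ compact. Since each step decreases $f$, every $x^k$ lies in this set. Because $d(x^k,y^k)=\rho$, hypothesis (ii) puts the whole geodesic $\gamma_k$ inside $\mathrm{gi}(U)$, where the local uniform Lipschitz gradient property with constant $L$ holds. I will use this property in its descent-lemma form along geodesics:
\[
f(\gamma_k(t))\leq f(x^k)+t\,Df(x^k)[\gamma_k]+\tfrac{L}{2}t^2\rho^2 ,\qquad t\in[0,1].
\]
Here $Df(x^k)[\gamma_k]=-|D|f(x^k)$ by (iii). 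One caveat: the directional derivative might be normalized per unit length, in which case the linear term carries a factor $\rho$. That would only change the constants below.

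For $\lambda\in(0,1]$ this gives
\[
f(x^{k+1})-f(x^k)\leq -\lambda|D|f(x^k)+\tfrac{L}{2}\lambda^2\rho^2 .
\]
The sufficient decrease condition \eqref{eq: BD assm suff descent} is therefore implied by the quadratic inequality
\[
\tfrac{L\rho^2}{2}\lambda^2-|D|f(x^k)\lambda+\tfrac1\delta|D|f(x^k)^{2/(p-1)}\leq 0 .
\]
I expect $\alpha(x^k,y^k)$ and $\beta(x^k,y^k)$ in \eqref{eq: sd condition} to be exactly the two roots of this quadratic:
\[
\alpha_k,\beta_k=\frac{|D|f(x^k)\mp\sqrt{|D|f(x^k)^2-\tfrac{2L\rho^2}{\delta}|D|f(x^k)^{2/(p-1)}}}{L\rho^2}.
\]
If the paper's formula differs, I will adapt the argument to it. The statement then has two parts. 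First, for $\lambda_k\in[\alpha_k,\beta_k]$ the quadratic is nonpositive, so the decrease holds by construction. Second, we must show that $[\alpha_k,\beta_k]\cap(0,1]\neq\emptyset$, which requires two things:
\begin{enumerate}
\item the discriminant is nonnegative, i.e.\ $\delta\geq 2L\rho^2|D|f(x^k)^{2/(p-1)-2}$;
\item the smaller root satisfies $\alpha_k\leq 1$.
\end{enumerate}
If $\mathcal{D}_k=\emptyset$ the algorithm stops, so we may assume $|D|f(x^k)>0$.

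The main obstacle is making these conditions uniform in $k$. Since $p\in(1,2]$, the exponent $\tfrac{2}{p-1}-2\geq 0$. Hence the discriminant condition only requires an \emph{upper} bound on $|D|f$ over the compact level set, and smallness of $|D|f$ does no harm. For that upper bound I plan two steps. First, bound $|D|f(x)$ by the difference quotient using the Lipschitz-gradient estimate and the $\rho$-ball contained in $U$. Second, use continuity of $f$ (from the smoothness assumption) on the compact set $\bigcup_{x\in\mathrm{lev}_{\leq f(x^0)}(f)}B_\rho(x)$ to obtain $M:=\sup|D|f<\infty$ there.

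For $\alpha_k\leq 1$ I use the rationalized form of the smaller root:
\[
\alpha_k=\frac{2\,|D|f(x^k)^{2/(p-1)}/\delta}{|D|f(x^k)+\sqrt{\cdots}}\leq \frac{2}{\delta}|D|f(x^k)^{2/(p-1)-1}\leq \frac{2}{\delta}M^{(3-p)/(p-1)} ,
\]
where the last exponent is nonnegative. The final step is to set
\[
\underline{\delta}:=\max\bigl\{2L\rho^2M^{2/(p-1)-2},\,2M^{(3-p)/(p-1)}\bigr\}.
\]
With this choice both conditions hold for every $\delta\geq\underline{\delta}$ and every $k$, and the conclusion follows.
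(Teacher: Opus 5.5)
Your strategy is the paper's: the descent lemma along the length-$\rho$ steepest descent geodesic, a uniform bound $M$ on $|D|f$ over the sublevel set, nonnegative exponents because $p\le 2$, and then $\delta$ large. \textbf{However, as written you prove nonemptiness of a different interval from the one in the statement, so the stated claim is not yet proved.}

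The endpoints in \eqref{eq: sd condition} are not the roots of your quadratic. Your normalization caveat does apply: $Df$ is divided by length, so $df(x^k)[\gamma_{[x^k,y^k]}]=-\rho s_k$ with $s_k:=|D|f(x^k)$. The statement's endpoints are therefore
\[
\alpha_k=\frac{2+L}{2\rho\delta}\,s_k^{\frac{2}{p-1}-1},\qquad \beta_k=\frac{s_k^{\frac{1}{p-1}}}{\rho\sqrt{\delta}} .
\]
They come from bounding the two terms of Lemma~\ref{lm: bg bound} separately:
\begin{itemize}
\item $\lambda\ge\alpha_k$ makes the linear term at most $-\frac{2+L}{2\delta}s_k^{2/(p-1)}$;
\item $\lambda\le\beta_k$ makes the quadratic term at most $\frac{L}{2\delta}s_k^{2/(p-1)}$.
\end{itemize}
So $[\alpha_k,\beta_k]$ lies inside your root interval, and your decrease argument covers it. But nonemptiness of the larger root interval says nothing about $[\alpha_k,\beta_k]\cap(0,1]$. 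You must check directly that
\begin{itemize}
\item $\alpha_k\le\beta_k$, i.e.\ $\frac{2+L}{2\sqrt{\delta}}s_k^{\frac{2-p}{p-1}}\le 1$, and
\item $\alpha_k\le 1$, i.e.\ $\frac{2+L}{2\rho\delta}s_k^{\frac{3-p}{p-1}}\le 1$.
\end{itemize}
Given $s_k\le M$, both hold for all $\delta\ge\underline{\delta}:=\max\bigl\{\tfrac{(2+L)^2}{4}M^{\frac{2(2-p)}{p-1}},\tfrac{2+L}{2\rho}M^{\frac{3-p}{p-1}}\bigr\}$. These are exactly conditions \eqref{eq: SD method delta 1}(a)--(b) in the proof of Proposition~\ref{pr: SD method}. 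The repair is mechanical, but it is needed.

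On the bound $M$, your route is more careful than the paper's. The paper infers a uniform bound on $V:=\mathrm{lev}_{\le f(x^0)}(f)$ only from pointwise finiteness of $|D|f$ and bounded compactness, which would need a semicontinuity argument it does not give. What you need is the two-sided form of Lemma~\ref{lm: bg bound},
\[
\bigl|f(y)-f(x)-df(x)[\gamma_{[x,y]}]\bigr|\le\tfrac{L}{2}d(x,y)^2,
\]
which holds because \eqref{eq: L-gradient} bounds an absolute value. With hypothesis (iii) you need not control the supremum over all of $G^\circ_{x^k}$:
\[
s_k=-\frac{df(x^k)[\gamma_{[x^k,y^k]}]}{\rho}\le\frac{f(x^k)-f(y^k)}{\rho}+\frac{L\rho}{2}\le\frac{f(x^0)-m}{\rho}+\frac{L\rho}{2}.
\]
Here $m$ is the minimum of $f$ on the compact set $\bigcup_{x\in V}B_\rho(x)\subset U$, on which $f$ is continuous.
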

\section{Directional derivatives and optimality}\label{sec: dir dervt}
\subsection{Definitions and basic properties}\label{s:defs}
Our construction of the directional subderivative  follows the same logic of the construction in Euclidean spaces as a limit over positive step scalings and descent steps (see for instance \cite[Definition 8.1]{VA}). This is also used in \cite[Definition 3.2]{OhtaPal15}, but since we make extensive use of both pieces of this limiting procedure, we call the first limit with respect to positive step scalings the {\em geodesic slope} along the given geodesic.
\begin{definition}[geodesic slope, directional derivative and normalized directional derivatives]\label{de: geod slp/dir devt/norm dir devt}
    Let $f:G\to\overline{\mathbb{R}}$ and $x\in\mathrm{dom}\,(f)$, and fix any $\gamma\in G_x$.
    \begin{enumerate}[(i)]
        \item The {\em geodesic sub- and superslope} of $f$ at $x$ along $\gamma$ are
        $$
        S^-f(x)[\gamma]:=\liminf_{t\searrow 0}\frac{f(\gamma(t))-f(x)}{t}, \, S^+f(x)[\gamma]:=\limsup_{t\searrow 0}\frac{f(\gamma(t))-f(x)}{t}.
        $$
        \item The {\em directional sub- and superderivative} of $f$ at $x$ in direction $\gamma$ are
        $$
        d^-f(x)[\gamma]:=\liminf_{\eta\to\gamma}S^-f(x)[\eta], \quad d^+f(x)[\gamma]:=\limsup_{\eta\to\gamma}S^+f(x)[\eta],
        $$
        and $f$ is said to be {\em directionally sub- resp. superdifferentiable at $x$ in direction $\gamma$} whenever these limits are finite and coincide:
        $$
        d^-f(x)[\gamma]=S^-f(x)[\gamma]\neq\pm\infty, \quad d^+f(x)[\gamma]=S^+f(x)[\gamma]\neq\pm\infty.
        $$
        \item The {\em normalized directional sub- and superderivative} of $f$ at $x$ in direction $\gamma\in G_x^{\circ}$ are
        $$
        D^-f(x)[\gamma]:=\frac{d^-f(x)[\gamma]}{d(\gamma(0), \gamma(1))}, \quad  D^+f(x)[\gamma]:=\frac{d^+f(x)[\gamma]}{d(\gamma(0), \gamma(1))}.
        $$
        The function $f$ is said to be
        {\em directionally differentiable at $x$ in direction $\gamma$} whenever the directional sub- and superderivatives coincide:
        $$
        d^+f(x)[\gamma]=d^-f(x)[\gamma]=df(x)[\gamma]:=\lim_{t\searrow 0}\frac{f(\gamma(t))-f(x)}{t}\neq\pm\infty.
        $$
        The limit $df(x)[\gamma]$ is the {\em directional derivative of $f$ at $x$ in direction $\gamma$}.
        The {\em normalized directional derivative} is denoted $Df(x)[\gamma]$.
    \end{enumerate}
\end{definition}
These objects are not limited to descent, so unlike the slope defined in \cite[Definition 2.1.4]{AmbGigSav2005} the sign of the directional derivatives is relevant. The following chain of inequalities is elementary:
\begin{equation}\label{eq: eleme ineqs}
    d^-f(x)[\gamma]\leq S^-f(x)[\gamma]\leq S^+f(x)[\gamma]\leq d^+f(x)[\gamma]\quad\forall x\in G, \gamma\in G_x.
\end{equation}
The next proposition will play an important role in the analysis of optimality conditions.
\begin{proposition}[directional subderivative of (strongly) convex functions]\label{pr: dir dervt of cvx fct}
    Let $f:G\to\overline{\mathbb{R}}$ be convex  and $x\in\mathrm{dom}\,(f)$.  Then
    $$
    f(y)-f(x)\geq d^-f(x)[\gamma_{[x,y]}] \qquad\text{for all}\,\,y\in G.
    $$
    If $f$ is strongly convex with constant $\mu>0$, then
    $$
    f(y)-f(x)\geq d^-f(x)[\gamma_{[x,y]}]+\frac{\mu}{2}d(x,y)^p \qquad\text{for all}\,\,y\in G.
    $$
\end{proposition}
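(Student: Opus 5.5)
The plan is to use the chain \eqref{eq: eleme ineqs}, $d^-f(x)[\gamma]\le S^-f(x)[\gamma]$, and bound the geodesic subslope along $\gamma_{[x,y]}$ by convexity. Fix $y\in G$ and let $\gamma=\gamma_{[x,y]}$, so $\gamma(t)=x_t=(1-t)x\oplus ty$. If $f(y)=+\infty$ there is nothing to prove. Otherwise $y\in\mathrm{dom}\,(f)$, and convexity gives, for every $t\in(0,1]$,
$$
f(\gamma(t))-f(x)\le t\bigl(f(y)-f(x)\bigr).
$$
Dividing by $t>0$ gives $\frac{f(\gamma(t))-f(x)}{t}\le f(y)-f(x)$ for all $t\in(0,1]$. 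Taking $\liminf_{t\searrow0}$ yields $S^-f(x)[\gamma]\le f(y)-f(x)$, and hence $d^-f(x)[\gamma]\le S^-f(x)[\gamma]\le f(y)-f(x)$.

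The inequality $d^-f(x)[\gamma]\le S^-f(x)[\gamma]$ follows directly from the definition of the liminf over $\eta\to\gamma$ in $(G_x,d_x)$: taking $\eta=\gamma$ itself is admissible (the liminf includes $\gamma$, or $\gamma$ is approached by the constant sequence). This is exactly the first inequality in \eqref{eq: eleme ineqs}, so it may be quoted. The degenerate case $y=x$ is also fine, since then $\gamma$ is trivial, $S^-f(x)[\gamma]=0$, and the bound reads $0\ge d^-f(x)[\gamma]$, which again holds by \eqref{eq: eleme ineqs}.

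For the strongly convex case we repeat the argument with \eqref{eq: mu-cvx}:
$$
f(x_t)-f(x)\le t\bigl(f(y)-f(x)\bigr)-\tfrac{\mu}{2}t(1-t)d(x,y)^p .
$$
Dividing by $t$ gives $\frac{f(x_t)-f(x)}{t}\le f(y)-f(x)-\frac{\mu}{2}(1-t)d(x,y)^p$. Taking $\liminf_{t\searrow0}$, the right side tends to $f(y)-f(x)-\frac{\mu}{2}d(x,y)^p$. Therefore $S^-f(x)[\gamma]\le f(y)-f(x)-\frac{\mu}{2}d(x,y)^p$, and combining with $d^-\le S^-$ and rearranging gives the claim.

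There is no real obstacle here. The only point needing care is the extended-valued arithmetic: if $f(y)=+\infty$ both inequalities are trivial, and since $x\in\mathrm{dom}\,(f)$ the convexity inequality gives $f(x_t)<\infty$ whenever $y\in\mathrm{dom}\,(f)$, so all the differences are well defined. The liminf may equal $-\infty$, which is harmless for upper bounds.
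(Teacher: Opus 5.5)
Your proposal is correct and follows essentially the same route as the paper: divide the (strong) convexity inequality by $t$, pass to $\liminf_{t\searrow 0}$ to bound $S^-f(x)[\gamma_{[x,y]}]$, and then invoke $d^-\le S^-$ from \eqref{eq: eleme ineqs}. The only differences are that you treat the cases $f(y)=+\infty$ and $y=x$ explicitly, which the paper leaves implicit, and that the paper handles the convex and strongly convex cases together by allowing $\mu\geq 0$.
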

\begin{proof}
    Let $\gamma_{[x,y]}\in G_x$ and denote $x_t:=\gamma_{[x,y]}(t)$. By definition of convexity, for some $\mu\geq 0$
    $$
    f(x_t)\leq (1-t)f(x)+tf(y)-\frac{\mu}{2}(1-t)td(x,y)^p \qquad\text{for all}\,\,t\in(0,1).
    $$
    Therefore, for all $t\in(0,1)$ and some $\mu\geq 0$
    $$
    \frac{f(x_t)-f(x)}{t}\leq f(y)-f(x)-\frac{\mu}{2}(1-t)d(x,y)^p,
    $$
    hence
    \begin{align*}
        f(y)-f(x)-\frac{\mu}{2}d(x,y)^p&=\liminf_{t\searrow 0}\left(f(y)-f(x)-\frac{\mu}{2}(1-t)d(x,y)^p\right)\\
        &\geq\liminf_{t\searrow 0}\frac{f(x_t)-f(x)}{t}=S^-f(x)[\gamma_{[x,y]}]\\
        &\geq\liminf_{\eta\to\gamma_{[x,y]}}S^-f(x)[\eta]=d^-f(x)[\gamma_{[x,y]}].
    \end{align*}
\end{proof}
As in the Euclidean setting, the directional derivatives enjoy sum and scalar multiplication rules.
\begin{proposition}[sum and product rules]\label{pr: sum rules}
    Given functions $f,g:G\to\extre$, fix any $x\in\mathrm{dom}\,(f)\cap\mathrm{dom}\,(g)$ and $\gamma\in G_x$. Then
    \begin{equation}\label{eq: sum rules 1}
        \begin{aligned}
            &d^-(f+g)(x)[\gamma]\geq  d^-f(x)[\gamma]+d^-g(x)[\gamma]\\
            &d^+(f+g)(x)[\gamma]\leq  d^+f(x)[\gamma]+d^+g(x)[\gamma]
        \end{aligned}
    \end{equation}
    and
    \begin{equation}\label{eq: multi rules 1}
        d^-(\lambda f)(x)[\gamma]=\lambda d^-f(x)[\gamma], \quad d^+(\lambda f)(x)[\gamma]=\lambda d^+f(x)[\gamma].
    \end{equation}
    Furthermore, if $f,g$ are both directionally differentiable at $x$ in direction $\gamma$, then for any $\lambda\in \mathbb{R}$ the composite function $\lambda f+g$ is also directionally differentiable at $x$ in direction $\gamma$ with
    \begin{equation}\label{eq: sum rules 2}
        d(\lambda f+g)(x)[\gamma]=\lambda df(x)[\gamma]+dg(x)[\gamma].
    \end{equation}
\end{proposition}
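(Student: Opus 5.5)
The plan is to work at the level of the geodesic slopes first and then pass to the outer limits over nearby geodesics $\eta\to\gamma$ in $(G_x,d_x)$. For every $\eta\in G_x$ and every $t\in(0,1]$ the difference quotient of $f+g$ splits exactly:
$$
\frac{(f+g)(\eta(t))-(f+g)(x)}{t}=\frac{f(\eta(t))-f(x)}{t}+\frac{g(\eta(t))-g(x)}{t}.
$$
This is valid because $x\in\mathrm{dom}\,(f)\cap\mathrm{dom}\,(g)$, so no $\infty-\infty$ occurs at $x$. Values at $\eta(t)$ may be $+\infty$, but both summands then lie in $\extre$.

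The first step applies the elementary inequalities $\liminf(a_t+b_t)\geq\liminf a_t+\liminf b_t$ and $\limsup(a_t+b_t)\leq\limsup a_t+\limsup b_t$ as $t\searrow0$. This gives
$$
S^-(f+g)(x)[\eta]\geq S^-f(x)[\eta]+S^-g(x)[\eta]
\quad\text{and}\quad
S^+(f+g)(x)[\eta]\leq S^+f(x)[\eta]+S^+g(x)[\eta]
$$
for each fixed $\eta$, using the usual convention that the right-hand sides are well defined; the case $\infty-\infty$ is excluded or interpreted by convention, as in the Euclidean setting of \cite{VA}.

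The second step takes $\liminf_{\eta\to\gamma}$ of the first inequality and $\limsup_{\eta\to\gamma}$ of the second. The same superadditivity of $\liminf$ (subadditivity of $\limsup$), now over the neighbourhood filter of $\gamma$ in $G_x$, yields \eqref{eq: sum rules 1}.

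For \eqref{eq: multi rules 1}, I would take $\lambda\geq0$, the case the rule is meant for; for $\lambda<0$ the roles of sub- and superderivative would swap, so I read the statement for nonnegative $\lambda$. The difference quotient of $\lambda f$ is $\lambda$ times that of $f$, and positive scalars commute with both $\liminf$ and $\limsup$, at both the slope level and the outer level. The case $\lambda=0$ holds trivially, with the convention $0\cdot\infty=0$.

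For \eqref{eq: sum rules 2}, directional differentiability of $f$ gives $d^-f(x)[\gamma]=d^+f(x)[\gamma]=df(x)[\gamma]$, and this value is finite; the same holds for $g$. For $\lambda\geq0$, the two inequalities of \eqref{eq: sum rules 1} applied to $\lambda f$ and $g$, together with \eqref{eq: eleme ineqs}, sandwich everything:
$$
\lambda df(x)[\gamma]+dg(x)[\gamma]\leq d^-(\lambda f+g)(x)[\gamma]\leq d^+(\lambda f+g)(x)[\gamma]\leq\lambda df(x)[\gamma]+dg(x)[\gamma].
$$
For $\lambda<0$, I would note that $-f$ is directionally differentiable whenever $f$ is, with $d^\mp(-f)=-d^\pm f$; this holds because $\liminf(-a)=-\limsup a$ at both levels. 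Writing $\lambda f=|\lambda|(-f)$ then reduces this case to the previous one. The sandwich forces all four quantities in \eqref{eq: eleme ineqs} to coincide, so the limit defining $d(\lambda f+g)(x)[\gamma]$ exists and equals $\lambda df(x)[\gamma]+dg(x)[\gamma]$.

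The main obstacle is only bookkeeping of extended values and sign conventions: $\infty-\infty$ in the sums, and the sign restriction hidden in \eqref{eq: multi rules 1}. Finiteness in the differentiable case removes both issues there.
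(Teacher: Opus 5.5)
Your proposal is correct and follows essentially the same route as the paper. Both arguments use superadditivity of $\liminf$ (and subadditivity of $\limsup$), first at the level of the geodesic slopes and then over $\eta\to\gamma$, pull a nonnegative $\lambda$ out of the limits, and close the differentiable case with a sandwich via \eqref{eq: eleme ineqs}. Your explicit treatment of $\lambda<0$ through $d^{\mp}(-f)=-d^{\pm}f$ fills a step the paper leaves implicit when it writes $d^-(\lambda f+g)(x)[\gamma]\geq\lambda d^-f(x)[\gamma]+d^-g(x)[\gamma]$ for arbitrary real $\lambda$.
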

\begin{proof}
    Inequalities in \eqref{eq: sum rules 1} follow from the super- and subadditivity of $\liminf$ and $\limsup$, respectively. Indeed,
    \begin{align*}
        d^-(f+g)(x)[\gamma]&=\liminf_{\eta\to\gamma}S^-(f+g)(x)[\gamma]\\
        &=\liminf_{\eta\to\gamma}\liminf_{t\searrow 0}\frac{(f+g)(\eta(t))-(f+g)(x)}{t}\\
        &\geq\liminf_{\eta\to\gamma}\Bigl(S^-f(x)[\eta]+S^-g(x)[\eta]\Bigr)\geq d^-f(x)[\gamma]+d^-g(x)[\gamma].
    \end{align*}
    The proof for the superderivative is similar. The proof of the multiplication rule \eqref{eq: multi rules 1} follows immediately from the definitions. Indeed,
    \begin{align*}
        d^-(\lambda f)(x)[\gamma]&=\liminf_{\eta\to\gamma}S^-(\lambda f)(x)[\gamma]=\liminf_{\eta\to\gamma}\liminf_{t\searrow 0}\frac{(\lambda f)(\gamma(t))-(\lambda f)(x)}{t}\\
        &=\lambda\liminf_{\eta\to\gamma}\liminf_{t\searrow 0}\frac{f(\gamma(t))-f(x)}{t}\\
        &=\lambda\liminf_{\eta\to\gamma}S^-f(x)[\gamma]=\lambda d^-f(x)[\gamma].
    \end{align*}
    Note that we can move $\lambda$ outside of $\liminf$ because $\lambda\geq 0$. The proof for the superderivative is similar.

    If the $\liminf$ is replaced by $\lim$, then the linearity of $\lim$ yields the multiplication rule
    \begin{align*}
        d(\lambda f)(x)[\gamma]&=\lim_{t\searrow 0}\frac{(\lambda f)(\gamma(t))-(\lambda f)(x)}{t}=\lim_{t\searrow 0}\frac{\lambda\bigl(f(\gamma(t))-f(x)\bigr)}{t}\\
        &=\lambda\lim_{t\searrow 0}\frac{f(\gamma(t))-f(x)}{t}=\lambda df(x)[\gamma].
    \end{align*}
    If both $f$ and $g$ are directionally differentiable at $x$ in direction $\gamma$, then \eqref{eq: sum rules 1} yields
    \begin{equation}\label{eq: sum rules 3}
        \begin{aligned}
            d^-(\lambda f+g)(x)[\gamma]&\geq \lambda d^-f(x)[\gamma]+d^-g(x)[\gamma]=\lambda df(x)[\gamma]+dg(x)[\gamma]\\
            &=\lambda d^+f(x)[\gamma]+d^+g(x)[\gamma]\geq d^+(\lambda f+g)(x)[\gamma].
        \end{aligned}
    \end{equation}
    At the same time, $d^-(\lambda f+g)(x)[\gamma]\leq d^+(\lambda f+g)(x)[\gamma]$. Therefore, \eqref{eq: sum rules 3} and \eqref{eq: eleme ineqs} applied to $\lambda f+g$ hold with equality everywhere, yielding
    \begin{align*}
        d^+(\lambda f+g)(x)[\gamma]&=S^+(\lambda f+g)(x)[\gamma]=\lambda df(x)[\gamma]+dg(x)[\gamma]\\
        &=S^-(\lambda f+g)(x)[\gamma]=d^-(\lambda f+g)(x)[\gamma].
    \end{align*}
    Hence, the limit $d(\lambda f+g)(x)[\gamma]$ exists and the equality \eqref{eq: sum rules 2} is proved.
\end{proof}
The same argument as in the proof of Proposition \ref{pr: sum rules} also proves the sum rules for geodesic slopes and normalized directional derivatives.
\begin{remark}\label{rm: truncation of geod}
    Given two points $x,y\in G$ and fix $z:=\gamma_{[x,y]}(\lambda)$ for some $\lambda\in(0,1)$
    $$
    \gamma_{[x,z]}(t)=\gamma_{[x,y]}(\lambda t) \qquad\text{and}\qquad \gamma_{[z,y]}(t)=\gamma_{[x,y]}(\lambda+(1-\lambda)t)
    $$
    by the property of geodesic path. One can easily verify that they are constant speed geodesics. Moreover, if $x\in\mathrm{dom}\,(f)$ then
    $$
    S^-f(x)[\gamma_{[x,z]}]=\lambda S^-f(x)[\gamma_{[x,y]}].
    $$
    If we normalize the slopes by the length of geodesics, then it is immediate that $S^-f(x)[\gamma_{[x,y]}]/d(x,y)=S^-f(x)[\gamma_{[x,z]}]/d(x,z)$. Next, observe that if $\eta^k\to\gamma_{[x,y]}$, then $\eta^k(\lambda)\to z$ by the uniqueness of geodesics. Therefore,
    $$
    df(x)[\gamma_{[x,z]}]=\liminf_{\eta\to\gamma_{[x,z]}}S^-f(x)[\eta]\leq\liminf_{\eta\to\gamma_{[x,y]}}S^-f(x)[\eta_{[x,\eta(\lambda)]}]=\lambda df(x)[\gamma_{[x,y]}].
    $$
    If in addition $f$ is directionally differentiable at $x$ in the direction $\gamma_{[x,y]}$, then by linearity $df(x)[\gamma_{[x,z]}]=\lambda df(x)[\gamma_{[x,y]}]$ and $Df(x)[\gamma_{[x,y]}]=Df(x)[\gamma_{[x,z]}]$.
\end{remark}
\subsection{First-order optimality conditions}\label{s:foc}
Returning to the unconstrained optimization problem \eqref{eq: model prb}, the next theorem states the first-order necessary and sufficient conditions for minimality.
\begin{theorem}[first-order necessary and sufficient conditions for minimality]\label{th: FNC-P/FSC}
    Let $f:G\to\overline{\mathbb{R}}$ be proper. If $\bar{x}\in\mathrm{dom}\,(f)$ is a local minimizer of $f$, then
    \begin{equation}\label{eq: Fermat with dervt}
        d^-f(\bar{x})[\gamma_{[\bar{x},y]}]\geq 0 \qquad\text{for all}\,\,y\in G.
    \end{equation}
    Conversely, if $f$ is proper and convex, with $\bar{x}\in\mathrm{dom}\,(f)$ satisfying \eqref{eq: Fermat with dervt} then $\bar{x}$ is a global minimizer of $f$. If additionally $f$ is strongly convex, then $\bar{x}$ is the unique minimizer of $f$.
\end{theorem}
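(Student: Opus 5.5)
The necessity part follows directly from the definitions, and the sufficiency part from Proposition \ref{pr: dir dervt of cvx fct}. The idea for necessity is to show that the geodesic subslope is nonnegative along \emph{every} geodesic emanating from $\bar{x}$. The inequality then passes to the $\liminf$ over $\eta\to\gamma$ in Definition \ref{de: geod slp/dir devt/norm dir devt}(ii), with no need to control how nearby geodesics behave.

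\emph{Necessity.} Let $\bar{x}\in\mathrm{dom}\,(f)$ be a local minimizer, so there is $r>0$ with $f(z)\geq f(\bar{x})$ for all $z\in B_r(\bar{x})$. Fix an arbitrary $\eta=\gamma_{[\bar{x},z]}\in G_{\bar{x}}$.
\begin{itemize}
\item If $z=\bar{x}$, then $\eta$ is the trivial geodesic, every difference quotient vanishes, and $S^-f(\bar{x})[\eta]=0$.
\item Otherwise, constant speed gives $d(\eta(t),\bar{x})=t\,d(\bar{x},z)$. Hence $\eta(t)\in B_r(\bar{x})$ for all $0<t<\min\{1,r/d(\bar{x},z)\}$.
\item For those $t$ the quotient $\bigl(f(\eta(t))-f(\bar{x})\bigr)/t$ is nonnegative. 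It may equal $+\infty$ if $f(\eta(t))=+\infty$, which causes no difficulty because $f(\bar{x})$ is finite.
\item Therefore $S^-f(\bar{x})[\eta]\geq 0$.
\end{itemize}
Since this holds for every $\eta\in G_{\bar{x}}$, taking $\liminf_{\eta\to\gamma_{[\bar{x},y]}}$ gives $d^-f(\bar{x})[\gamma_{[\bar{x},y]}]\geq 0$ for all $y\in G$, which is \eqref{eq: Fermat with dervt}.

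\emph{Sufficiency.} Let $f$ be proper and convex, and let $\bar{x}\in\mathrm{dom}\,(f)$ satisfy \eqref{eq: Fermat with dervt}. Proposition \ref{pr: dir dervt of cvx fct} gives, for every $y\in G$,
$$
f(y)-f(\bar{x})\geq d^-f(\bar{x})[\gamma_{[\bar{x},y]}]\geq 0,
$$
so $\bar{x}$ is a global minimizer.

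\emph{Uniqueness.} Suppose in addition that $f$ is strongly convex with constant $\mu>0$. The strong convexity version of Proposition \ref{pr: dir dervt of cvx fct} gives
$$
f(y)-f(\bar{x})\geq \tfrac{\mu}{2}\,d(\bar{x},y)^p>0 \qquad\text{for all } y\neq\bar{x}.
$$
Hence no other point attains the minimum, and $\bar{x}$ is the unique minimizer.

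The argument has no genuine obstacle; the only points needing care are bookkeeping. First, the local-minimality neighborhood must be reached uniformly along each fixed geodesic, which constant speed provides. Second, the extended-valued case is covered, since $f(\bar{x})<\infty$ makes all quotients well defined in $(-\infty,+\infty]$.
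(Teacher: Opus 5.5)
Your proof is correct and follows the paper's route: local minimality gives $S^-f(\bar{x})[\eta]\geq 0$ along every geodesic $\eta\in G_{\bar{x}}$, this passes to the $\liminf$ defining $d^-f$, and sufficiency and uniqueness come from Proposition~\ref{pr: dir dervt of cvx fct} exactly as in the paper. The only difference is cosmetic: you restrict to $t<r/d(\bar{x},z)$ directly, whereas the paper first treats endpoints in $B_r(\bar{x})$ and then rescales using Remark~\ref{rm: truncation of geod}.
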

\begin{proof}
    Let $r>0$ be sufficiently small such that
    $$
    f(y)\geq f(\bar{x}) \qquad\text{for all}\,\,y\in B_r(\bar{x}).
    $$
    Fix some $y\in B_r(\bar{x})$ and let $x_t:=\gamma_{[\bar{x},y]}(t)$. Since
    $$
    d(\bar{x},x_t)=td(\bar{x},y)\leq tr\leq r\qquad\text{for all}\,\,t\in(0,1].
    $$
    the local minimality of $\bar{x}$ shows
    $$
    \frac{f(x_t)-f(\bar{x})}{t}\geq 0 \qquad\text{for all}\,\,t\in(0,1].
    $$
    This implies
    \begin{equation}\label{eq: FNC-P 1}
        S^-f(\bar{x})[\gamma_{[\bar{x},y]}]=\liminf_{t\searrow 0}\frac{f(x_t)-f(\bar{x})}{t}\geq 0 \qquad\text{for all}\,\,y\in B_r(\bar{x}).
    \end{equation}
    Now fix any $y\in G$. There exists some $t\in(0,1]$ such that $z:=\gamma_{[\bar{x},y]}(t)\in B_r(\bar{x})$. Remark \ref{rm: truncation of geod} and \eqref{eq: FNC-P 1} yield
    $$
    S^-f(\bar{x})[\gamma_{[\bar{x},y]}]=\frac{1}{t}S^-f(\bar{x})[\gamma_{[\bar{x},z]}]\geq 0.
    $$
    Since $y$ was arbitrary, this shows that $S^-f(x)[\gamma]\geq 0$ for all $\gamma\in G_x$,  and hence $d^-f(x)[\gamma]\geq 0$, as claimed.

    For the converse statement, Proposition \ref{pr: dir dervt of cvx fct} yields
    $$
    f(y)-f(\bar{x})\geq d^-f(\bar{x})[\gamma_{[\bar{x},y]}]+\frac{\mu}{2}d(\bar{x},y)^p \qquad\text{for all}\,\,y\in G
    $$
    with $\mu\geq 0$. Since by assumption $d^-f(\bar{x})[\gamma_{[\bar{x},y]}]\geq 0$, we have
    \begin{equation}\label{eq: p-th growth}
        f(y)-f(\bar{x})\geq \frac{\mu}{2}d(\bar{x},y)^p \qquad\text{for all}\,\,y\in G,
    \end{equation}
    i.e.  $\bar{x}\in G$ is a global minimizer of $f$. If additionally $\mu>0$, then $f(\bar{x})<f(y)$ whenever $y\in G\setminus\{\bar{x}\}$.
\end{proof}
Theorem \ref{th: FNC-P/FSC} states that a strongly convex function achieves a pointwise  $p$-th order growth \eqref{eq: p-th growth} at its minimizers. Furthermore, from the proof of  Theorem \ref{th: FNC-P/FSC} it can be seen that the optimality condition \eqref{eq: Fermat with dervt} can be relaxed to
\begin{equation}\label{eq: Fermat with slp}
    S^-f(\bar{x})[\gamma_{[\bar{x},y]}]\geq 0 \qquad\text{for all}\,\,y\in G.
\end{equation}
\section{Basic descent method}\label{sec: BD method}
The descent set $\Dcal_k$ that was vaguely specified in \eqref{e:descent set vague} can now be defined  precisely:
\begin{equation}\label{e:descent set}
    \Dcal_k:=\bigl\{ y\in G\setminus\{x^k\} \,\big|\, D^-f(x^k)[\gamma_{[x^k,y]}]<0 \bigr\}.
\end{equation}
\subsection{Generic convergence analysis}\label{s:gen convergence}
In this subsection, we prove a generic result of the BD method. Note that the geodesic property implies that the step scaling $\lambda_k$ in Algorithm \ref{alg: BD} is given by
$$
d(x^{k+1},x^k)=\lambda_k d(x^k,y^k) \qquad\text{for all}\,\,k\in\Nbb.
$$
The following generic assumptions on the sequence $(x^k)_{k\in\bbN}$ in relation to the function $f$ will be used to classify the kinds of regularity of $f$ that are required to achieve convergence results.
\begin{assumption}[generic descent assumptions]\label{as: gen assm}
    Let $(x^k)_{k\in\bbN}$ be a sequence generated by Algorithm \ref{alg: BD} and let $(y^k,\lambda_k)_{k\in\bbN}$ be the corresponding sequence of descent steps and step scalings.
    \begin{enumerate}[(a)]
        \item\label{as: gen assm bbb} Boundedness: $f$ is bounded below on $G$.
        \item\label{as: gen assm ds} Decreasing steps: whenever Algorithm \ref{alg: BD} does not terminate,
        the step lengths satisfy
        $$
        \sum_{k=0}^{+\infty}\lambda^2_kd(x^k, y^k)^2<+\infty \qquad\text{while}\qquad \sum_{k=0}^{+\infty}\lambda_kd(x^k, y^k)=+\infty.
        $$
        \item\label{as: gen assm mdfv} Monotone decreasing function values:
        $$
        f((1-\lambda_k)x^k\oplus\lambda_k y^k)<f(x^k) \quad \text{for all}\,\,k\in\bbN.
        $$
        \item\label{as: gen assm sd} Sufficient decrease: there exists $M>0$ and $\sigma\in(0,1]$ such that for all $k\in\bbN$
        $$
        f((1-\lambda_k)x^k\oplus\lambda_k y^k)\leq f(x^k)+\lambda_k \sigma d^-f(x^k)[\gamma_{[x^k, y^k]}]+\lambda_k^2 Md(x^k, y^k)^2.
        $$
    \end{enumerate}
\end{assumption}
\begin{lemma}[generic convergence of the BD method]\label{lm: proto descent}
    Let $f:G\to\extre$ be proper. Let $(x^k)_{k\in\bbN}$ be the sequence generated by Algorithm \ref{alg: BD} with initial point $x^0\in\mathrm{dom}\,(f)$. If the conditions in Assumption \ref{as: gen assm} hold, then
    $$
    \lim_{k\to\infty}d(x^{k+1},x^k)=0,
    $$
    and at least one of the following must occur:
    \begin{enumerate}[(i)]
        \item\label{lm: proto descent a} the sequence $(x^k)$ converges finitely to a point $\xbar\in G$ satisfying
        $$
        d^-f(\xbar)[\gamma_{[\xbar, y]}]\geq 0 \qquad\text{for all}\,\,y\in G;
        $$
        \item\label{lm: proto descent b} the sequence of directional subderivatives converges to zero:
        $$
        \lim_{k\to\infty} d^-f(x^k)[\gamma_{[x^k, y^k]}]=0.
        $$
    \end{enumerate}
\end{lemma}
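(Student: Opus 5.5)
The plan is to combine a telescoping sum of the sufficient-decrease inequality in Assumption \ref{as: gen assm}(\ref{as: gen assm sd}) with the two series conditions in (\ref{as: gen assm ds}). The limit $d(x^{k+1},x^k)\to0$ and case (i) are straightforward; the real work is upgrading a summability statement to the full limit in case (ii).

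\emph{Step 1 (vanishing steps and case (i)).} First, $d(x^{k+1},x^k)=\lambda_k d(x^k,y^k)$. If the algorithm does not terminate, then $\sum_k\lambda_k^2d(x^k,y^k)^2<+\infty$ forces $d(x^{k+1},x^k)\to 0$. If it terminates at some $\xbar=x^K$, the sequence is eventually constant and the limit is trivially $0$. In that case $\Dcal_K=\emptyset$, so $D^-f(\xbar)[\gamma_{[\xbar,y]}]\geq0$ for every $y\neq\xbar$. Multiplying by $d(\xbar,y)>0$ gives $d^-f(\xbar)[\gamma_{[\xbar,y]}]\ge0$ for all $y\neq\xbar$. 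The trivial geodesic contributes $0$, since every $\eta\to\gamma_{[\xbar,\xbar]}$ has $S^-f(\xbar)[\eta]$ controlled as in Remark \ref{rm: truncation of geod}; I would check this edge case separately. This is alternative (i).

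\emph{Step 2 (summability).} Assume the algorithm does not terminate, and write $a_k:=-d^-f(x^k)[\gamma_{[x^k,y^k]}]$. Since $y^k\in\Dcal_k$ and $d(x^k,y^k)>0$, we have $a_k>0$. Assumption \ref{as: gen assm}(\ref{as: gen assm sd}) rearranges to
$$
\sigma\lambda_k a_k\le f(x^k)-f(x^{k+1})+M\lambda_k^2d(x^k,y^k)^2 .
$$
Summing from $0$ to $N$ and telescoping gives $\sigma\sum_{k\le N}\lambda_ka_k\le f(x^0)-\inf_G f+M\sum_k\lambda_k^2d(x^k,y^k)^2$. This bound is finite by (\ref{as: gen assm bbb}), by (\ref{as: gen assm ds}), and because $x^0\in\dom f$. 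Hence $\sum_k\lambda_ka_k<+\infty$. Monotonicity (\ref{as: gen assm mdfv}) also gives that $f(x^k)$ decreases to a finite limit, so the increments $f(x^k)-f(x^{k+1})$ are summable and tend to $0$.

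\emph{Step 3 (full limit; main obstacle).} Setting $a_k=|D^-_k|\,d(x^k,y^k)$, where $D^-_k:=D^-f(x^k)[\gamma_{[x^k,y^k]}]$, and using $\sum_k\lambda_kd(x^k,y^k)=+\infty$, Step 2 yields only $\liminf_k|D^-_k|=0$. That is not what the statement asks for. To obtain $\lim_k a_k=0$ I would argue by contradiction. Suppose $a_k\ge\varepsilon$ along a subsequence $k_j$. The per-step estimate gives
$$
a_{k}\le \frac{f(x^{k})-f(x^{k+1})}{\sigma\lambda_{k}}+\frac{M}{\sigma}\lambda_{k}d(x^{k},y^{k})^2 .
$$
The second term tends to $0$ provided $d(x^k,y^k)$ stays bounded, because $\lambda_kd(x^k,y^k)\to0$; I would need to extract such a bound, or otherwise handle the $d(x^k,y^k)^2$ factor. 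The subsequence would therefore have to satisfy $f(x^{k_j})-f(x^{k_j+1})\gtrsim\varepsilon\sigma\lambda_{k_j}$.

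I would then try to show that such indices cannot persist. The plan is to compare with neighbouring indices through the divergence $\sum\lambda_kd(x^k,y^k)=\infty$: infinitely many blocks of comparable decrease would make $\sum_k(f(x^k)-f(x^{k+1}))$ diverge, contradicting Step 2. I expect this step to be the genuine difficulty. Step 2 alone does not exclude isolated indices with tiny $\lambda_k$ and large $a_k$, so the argument must use the structure of the assumptions more carefully than summation. If the block comparison fails, the fallback is to exploit that $y^k$ may be replaced by a point on $\gamma_{[x^k,y^k]}$ via Remark \ref{rm: truncation of geod}, which rescales $a_k$ and $\lambda_k$ inversely. The aim would be to normalize $\lambda_k$ into a regime where $\sum\lambda_ka_k<\infty$ does force $a_k\to0$.
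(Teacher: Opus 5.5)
Your Steps~1 and~2 match the paper's proof, and your diagnosis in Step~3 is correct. From $\sum_k\lambda_kd(x^k,y^k)\,\bigl|D^-f(x^k)[\gamma_{[x^k,y^k]}]\bigr|<+\infty$ and $\sum_k\lambda_kd(x^k,y^k)=+\infty$ you only get $\liminf_k\bigl|D^-f(x^k)[\gamma_{[x^k,y^k]}]\bigr|=0$. The paper's proof makes exactly the leap you decline to make. It asserts $D^-f(x^k)[\gamma_{[x^k,y^k]}]\to0$ ``by the second part of Assumption~\ref{as: gen assm}(b)'', then passes to $d^-f$ with no bound on $d(x^k,y^k)$. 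Your proposal still has a genuine gap, because Step~3 is never carried out. Neither of your strategies can close it. Nothing in Assumption~\ref{as: gen assm} ties $d^-f(x^k)[\gamma_{[x^k,y^k]}]$ to its values at neighbouring indices, so block comparison cannot rule out isolated spikes. The rescaling fallback via Remark~\ref{rm: truncation of geod} changes the very quantity in (ii), which refers to the $y^k$ actually chosen. In fact (ii) does not follow from (a)--(d) at all, even for a $1$-Lipschitz $f$ with $d(x^k,y^k)\equiv1$.

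\emph{Counterexample.} Take $G=\mathbb{R}$, which is $2$-uniformly convex with $c=2$. Set $s_k:=1/(k+1)$, $x^0:=0$, $y^k:=x^k+1$ and $\lambda_k:=s_k$, so $x^{k+1}=x^k+s_k\to+\infty$. Let $J:=\{2^j-1\,:\,j\geq0\}$, and put $m_k:=1$ for $k\in J$ and $m_k:=s_k$ otherwise. Let $f$ be continuous and piecewise linear with $f(0)=0$, slope $-m_k$ on $[x^k,x^{k+1}]$, and $f(x)=-x$ for $x<0$.
\begin{itemize}
\item (a) holds: $\sum_km_ks_k\leq\sum_j2^{-j}+\sum_ks_k^2<\infty$, so $f$ is bounded below.
\item The algorithm never terminates: $d^-f(x^k)[\gamma_{[x^k,y^k]}]=-m_k<0$, so $y^k\in\Dcal_k$ for every $k$.
\item (c) and (d) hold with $\sigma=1$ and any $M>0$: $f(x^{k+1})-f(x^k)=-m_ks_k=\lambda_k\,d^-f(x^k)[\gamma_{[x^k,y^k]}]$.
\item (b) holds: $\lambda_kd(x^k,y^k)=s_k$.
\end{itemize}
Yet $d^-f(x^k)[\gamma_{[x^k,y^k]}]=-1$ for every $k\in J$, so neither (i) nor (ii) occurs.

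What Assumption~\ref{as: gen assm} does yield is the $\liminf$ statement. It holds for $D^-f$ unconditionally, and for $d^-f$ when $\sup_kd(x^k,y^k)<\infty$. The full limit needs an extra hypothesis that controls how fast $D_k:=D^-f(x^k)[\gamma_{[x^k,y^k]}]$ can vary along the iterates, for example $|D_{k+1}-D_k|\leq L'\lambda_kd(x^k,y^k)$. Under such a bound an upcrossing argument goes through. Each passage of $|D_k|$ from below $\varepsilon/2$ to above $\varepsilon$ forces travel of length at least about $\varepsilon/(2L')$. It therefore adds a fixed amount of order $\varepsilon^2/L'$ to the convergent series $\sum_k\lambda_kd(x^k,y^k)|D_k|$, so only finitely many such passages can occur.
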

\begin{proof}
    That the sequence of steps $d(x^{k+1},x^k)$ converges to zero follows immediately from either finite termination or Assumption \ref{as: gen assm} \eqref{as: gen assm ds} since $d(x^{k+1},x^k)=\lambda_kd(x^k,y^k)$. Next note that by Assumption \ref{as: gen assm} \eqref{as: gen assm mdfv}, the sequence of function values is monotone decreasing.

    With these initial facts in mind, if the sequence of points $(x^k)_{k\in\bbN}$ converges finitely, then the set $\Dcal_K$ must be empty for some $K$. But $\Dcal_K=\emptyset$ is equivalent to $D^-f(x^K)[\gamma]\geq 0$ for all $\gamma\in G^\circ_{x^K}$, which  for all $\gamma\in G^\circ_{x^K}$ is equivalent to $0\leq d^-f(x^K)[\gamma]/d(\gamma(0), \gamma(1))$. Finally, this is equivalent to $d^-f(x^K)[\gamma_{[x^K,y]}]\geq 0$ for all $y\in G$. By Theorem \ref{th: FNC-P/FSC} the function then satisfies first-order necessary conditions for local optimality and $x^K$ is therefore a critical point. Nevertheless case \eqref{lm: proto descent b} might still occur.

    If case \eqref{lm: proto descent a} does not happen, it remains to show that case \eqref{lm: proto descent b} must occur. By Assumption \ref{as: gen assm} \eqref{as: gen assm bbb}, the function values are bounded below by some $f_*\leq f(x^k)$ for all $k$. Moreover, since the sequence $(x^k)_{k\in\bbN}$ does not converge finitely, there is some $ y^k\in B_r(x^k)$ for all $k\in\bbN$ for which the sequence of corresponding function values is monotonically decreasing.  It follows that $f(x^{k+1})-f(x^k)\to 0$. In this context, note that Assumption \ref{as: gen assm} \eqref{as: gen assm sd} yields
    $$
    f(x^{k+1})-f(x^{k})\leq\lambda_k\sigma d^-f(x^k)[\gamma_{[x^k,y^k]}]+\lambda_k^2d(x^k,y^k)^2M \qquad \text{for all}\,\, k\in\Nbb,
    $$
    where $\sigma\in(0,1]$ is fixed. Summing these differences over $0\leq j\leq k-1$ yields
    \begin{align*}
        f(x^k)-f(x^0)&\leq\sum_{j=0}^{k-1}\lambda_j \sigma d^-f(x^j)[\gamma_{[x^j,y^j]}]+\sum_{j=0}^{k-1}\lambda_j^2d(x^j,y^j)^2M.
    \end{align*}
    Then square summability of the steps $\lambda_jd(x^j,y^j)$ yields
    $$
    f_*-f(x^0)\leq f(x^k)-f(x^0)\leq\sum_{j=0}^{k-1}\lambda_j \sigma d^-f(x^j)[\gamma_{[x^j,y^j]}]+\Mbar,
    $$
    where $\Mbar:=M\sum_{j\in\bbN}\lambda_j^2d(x^j,y^j)^2$, so there is a lower bound $\Mtilde<0$ for the monotonically decreasing partial sum: $\Mtilde\leq\sum_{j=0}^k\lambda_jd^-f(x^j)[\gamma_{[x^j,y^j]}]$ for all $k\in\bbN$. Hence the partial sums converge to $a^*:=\sigma\lim_{k\to\infty}\sum_{j=0}^k\lambda_j d^-f(x^j)[\gamma_{[x^j,y^j]}]$. Since the scalings $\lambda_j$ are nonnegative and the summands  are negative, this implies that
    \begin{align*}
        &\sum_{j=0}^{+\infty}\lambda_j d^-f(x^j)[\gamma_{[x^j,y^j]}]
        =\,\sum_{j=0}^{+\infty}\lambda_j D^-f(x^j)[\gamma_{[x^j,y^j]}]d(x^j,y^j)\neq\pm\infty,
    \end{align*}
    and therefore $D^-f(x^k)[\gamma_{[x^k,y^k]}]\to 0$ by the second part of Assumption~\ref{as: gen assm} \eqref{as: gen assm ds}. Moreover $d^-f(x^k)[\gamma_{[x^k,y^k]}]\to 0$, which
    is exactly the case \eqref{lm: proto descent b}.
\end{proof}
It is not guaranteed that event \eqref{lm: proto descent b} means that the sequence of points $(x^k)_{k\in\bbN}$ converges to critical points: the directional subderivatives in the descent steps $y^k$ could be vanishing too quickly, in which case the iterates simply stagnate.  To prevent this, we need the sufficient decrease condition \eqref{eq: BD assm suff descent}.
\subsection{Uniform Lipschitz gradient along geodesics}\label{s:Unif Lip}
This subsection characterizes a family of functions that satisfy the assumptions, and also establishes criteria to guarantee convergence to critical points. From this point forward the functions are defined as mappings $f:G\to\mathbb{R}$. As such, the functions are finite-valued everywhere on $G$, so trivially $\dom f=G$, and this detail can be ignored.

For $\gamma_{[x,y]}\in G_x^{\circ}$ and $f:G\to\mathbb{R}$, the notation $f\in C^m(\gamma_{[x,y]})$ for $m\in\Nbb$, means that the composed function $f\circ\gamma_{[x,y]}:[0,1]\to\mathbb{R}$, $t\mapsto f(\gamma_{[x,y]}(t))$ is of $C^m$ class, i.e. $f\circ\gamma_{[x,y]}\in C^m([0,1])$.

In the following, we denote by $C(G)$ the set of all finite and continuous functions $f:G\to\mathbb{R}$; the set of continuously differentiable functions with respect to the parameterization of the geodesic $\gamma_{[x,y]}$ is denoted $C^1(\gamma_{[x,y]})$.
\begin{proposition}\label{pr: C^1 implies diff}
    Let $\gamma_{[x,y]}\in G_x^{\circ}$ and $f\in C(G)\cap C^1(\gamma_{[x,y]})$, then $f$ is directionally differentiable at $x\in G$ in the direction $\gamma_{[x,y]}$, and for all $t_0\in(0,1)$
    \begin{equation}\label{eq: df(z)}
        \left.\frac{\mathrm{d}}{\mathrm{d}t}f(\gamma_{[x,y]}(t))\right|_{t=t_0}=\frac{df(\gamma(t_0))[\gamma_{[\gamma(t_0),y]}]}{1-t_0}=Df(\gamma(t_0))[\gamma_{[\gamma(t_0),y]}]d(x,y).
    \end{equation}
    At $t_0=0$, $df(\gamma(t_0))[\gamma_{[x,y]}]$ coincides with the right-derivative of $(f\circ\gamma_{[x,y]})$.
\end{proposition}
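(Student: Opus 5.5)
The plan is to reduce everything to the one-variable function $\varphi:=f\circ\gamma_{[x,y]}:[0,1]\to\mathbb{R}$, which is $C^1$ by hypothesis, and to use the reparametrization of sub-geodesics from Remark~\ref{rm: truncation of geod}. Write $\gamma:=\gamma_{[x,y]}$. First consider the endpoint $t_0=0$. By definition $S^\pm f(x)[\gamma]=\liminf/\limsup_{t\searrow0}(\varphi(t)-\varphi(0))/t$. Since $\varphi\in C^1([0,1])$, this quotient has a limit, namely the right derivative $\varphi'(0^+)$. Hence $S^-f(x)[\gamma]=S^+f(x)[\gamma]=\varphi'(0^+)$, which is finite. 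This already gives the final sentence of the statement, once directional differentiability is established.

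Next take an interior point $t_0\in(0,1)$ and set $z:=\gamma(t_0)$. By Remark~\ref{rm: truncation of geod}, $\gamma_{[z,y]}(s)=\gamma(t_0+(1-t_0)s)$. Therefore
$$
\frac{f(\gamma_{[z,y]}(s))-f(z)}{s}=(1-t_0)\,\frac{\varphi(t_0+(1-t_0)s)-\varphi(t_0)}{(1-t_0)s}\longrightarrow(1-t_0)\varphi'(t_0)\quad\text{as }s\searrow0,
$$
so the geodesic slope of $f$ at $z$ along $\gamma_{[z,y]}$ exists and equals $(1-t_0)\varphi'(t_0)$. Dividing by $1-t_0$ gives the first equality in \eqref{eq: df(z)}. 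For the second equality, normalize by $d(z,y)=(1-t_0)d(x,y)$, which holds by the constant-speed property. This gives $Df(z)[\gamma_{[z,y]}]\,d(x,y)=df(z)[\gamma_{[z,y]}]/(1-t_0)$.

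The remaining, and in my view main, step is to upgrade these equalities of slopes $S^-=S^+$ to the equalities $d^-=S^-$ and $d^+=S^+$ required by Definition~\ref{de: geod slp/dir devt/norm dir devt}(iii). Here the outer $\liminf$ and $\limsup$ range over geodesics $\eta\to\gamma$ in $(G_x,d_x)$. The inequalities $d^-\le S^-\le S^+\le d^+$ from \eqref{eq: eleme ineqs} give one direction for free. For the reverse direction I would proceed as follows. Take $\eta_n=\gamma_{[x,y_n]}$ with $y_n\to y$, and write each difference quotient of $f$ along $\eta_n$ as the corresponding difference quotient of $\varphi$ plus an error term
$$
\frac{f(\eta_n(t))-f(\gamma(t))}{t}.
$$
I would then try to control this error using the continuity of $f$ together with the uniqueness of geodesics, which gives $\eta_n(t)\to\gamma(t)$ uniformly in $t$ because $d(\eta_n(t),\gamma(t))\le$ a modulus in $d(y_n,y)$ in a $p$-uniformly convex space. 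Mere continuity of $f$ does not control the error at rate $o(t)$, however. So I expect this step to require reading the $C^1$ hypothesis as holding locally uniformly on geodesics near $\gamma$, with derivatives $(f\circ\eta)'$ converging to $\varphi'$. Under that reading the mean value theorem gives $S^\pm f(x)[\eta_n]=(f\circ\eta_n)'(0)\to\varphi'(0)$, which closes the argument. If this fails, I would fall back to establishing the formula \eqref{eq: df(z)} at the level of the geodesic slopes $S^\pm$ along $\gamma$, which is exactly what the computation above yields.
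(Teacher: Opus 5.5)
Your computation of \eqref{eq: df(z)} and of the case $t_0=0$ is the paper's own argument. You reparametrize $\gamma_{[z,y]}(s)=\gamma(t_0+(1-t_0)s)$ via Remark \ref{rm: truncation of geod}, pass to the one-sided limit, and normalize with $d(z,y)=(1-t_0)d(x,y)$.

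The step you single out is a genuine gap, but it is a gap in the statement, not in your reasoning. The paper disposes of it in one sentence: directional differentiability ``follows immediately'' from $f\in C(G)\cap C^1(\gamma_{[x,y]})$. That sentence is false under Definition \ref{de: geod slp/dir devt/norm dir devt}(iii). That definition requires $d^-f(x)[\gamma]=d^+f(x)[\gamma]$, which involves every geodesic $\eta\to\gamma$, while the hypothesis constrains $f$ only on $\gamma$ itself.

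Here is a counterexample. In $G=\mathbb{R}^2$ take $x=0$, $y=(1,0)$, and $f(z)=\min\{\|z\|,|z_2|/\|z\|\}$ for $z\neq0$, $f(0)=0$. Then $f$ is continuous and $f\circ\gamma\equiv 0$. For $\eta=\gamma_{[0,w]}$ with $w_2\neq0$, however, $f(\eta(t))=t\|w\|$ for small $t$. So $S^\pm f(0)[\eta]=\|w\|\to1$ as $w\to y$, whence $d^-f(0)[\gamma]=0<1=d^+f(0)[\gamma]$. (With $f(z)=\sqrt{|z_2|}$ one even gets $d^+f(0)[\gamma]=+\infty$.)

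Hence what can actually be proved, by you or by the paper, is exactly your fallback. Namely, $S^-f(x)[\gamma]=S^+f(x)[\gamma]=\varphi'(0^+)$ is finite, and \eqref{eq: df(z)} holds with $df$ read as the one-sided limit $\lim_{t\searrow0}$.

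Your proposed strengthening, as phrased ($(f\circ\eta)'(0)\to\varphi'(0)$ for $\eta\to\gamma$), is essentially the conclusion $d^-=d^+$ assumed outright. The mean value theorem plays no role there, since $S^\pm f(x)[\eta]=(f\circ\eta)'(0)$ holds by definition when $f\circ\eta$ is $C^1$.

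A hypothesis that really closes the step must make the first-order remainder uniform over geodesics near $\gamma$. One example is \eqref{eq: L-gradient} on a convex set $U$ containing $x$ and a neighbourhood of $y$. For $g=f\circ\gamma_{[x,w]}$ it gives $|g'(\tau)-g'(0)|\le L\tau\,d(x,w)^2$. Integrating yields $|f(\gamma_{[x,w]}(t))-f(x)-t\,df(x)[\gamma_{[x,w]}]|\le\frac{L}{2}t^2d(x,w)^2$. So $\eta\mapsto df(x)[\eta]$ is a locally uniform limit of the maps $\eta\mapsto(f(\eta(t))-f(x))/t$. These maps are continuous, because $f$ is continuous and $\eta(t)$ depends continuously on $\eta(1)$ (the point you noted for $p$-uniformly convex spaces). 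Hence $df(x)[\cdot]$ is continuous and $d^-=d^+=df$.

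This uniform version is what the paper actually relies on later. For instance, the proof of Proposition \ref{pr: ex of SD direction} uses $d^+f(x)[\gamma]=df(x)[\gamma]$.
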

\begin{proof}
    Directional differentiability of $f$ in the direction $\gamma_{[x,y]}$ follows immediately from the restriction $f\in C(G)\cap C^1(\gamma_{[x,y]})$.

    To see \eqref{eq: df(z)}, let $z:=\gamma_{[x,y]}(t_0)$ and $g(t)\equiv f(\gamma_{[x,y]}(t))$. By Remark \ref{rm: truncation of geod} $\gamma_{[z,y]}(t)=\gamma_{[x,y]}(t_0+(1-t_0)t)$ for all $t\in[0,1]$. This, together with continuity of $f$ and existence of $g'(t_0)$ yields
    \begin{align*}
        g'(t_0)&=\lim_{\Delta t\to 0}\frac{g(t_0+\Delta t)-g(t_0)}{\Delta t}=\lim_{\Delta t\searrow 0}\frac{f(\gamma_{[x,y]}(t_0+\Delta t))-f(z)}{\Delta t}\\
        &=\lim_{\Delta t\searrow 0}\frac{f(\gamma_{[x,y]}(t_0+(1-t_0)\Delta t))-f(z)}{(1-t_0)\Delta t}\\
        &=\frac{1}{1-t_0}\lim_{\Delta t\searrow 0}\frac{f(\gamma_{[z,y]}(\Delta t))-f(z)}{\Delta t}=\frac{df(z)[\gamma_{[z,y]}]}{1-t_0}.
    \end{align*}
    The last equality follows from the fact that $f$ is also $C^1$ along $\gamma_{[z,y]}$. Moreover, by the property of geodesics, $d(z,y)/d(x,y)=1-t_0$. Thus for all $t_0\in(0,1)$
    $$
    \frac{df(z)[\gamma_{[z,y]}]}{1-t_0}=\frac{df(z)[\gamma_{[z,y]}]d(x,y)}{d(z,y)}=Df(z)[\gamma_{[z,y]}]d(x,y).
    $$
    For $t_0=0$ (i.e. $z=x$), the directional derivative $df(x)[\gamma_{[x,y]}]$ coincides with the right-derivative of $g$ at $t_0=0$ by definition of the one-side derivative.
\end{proof}
The next definition introduces a uniform regularity of $f$ on a convex subset $U\subset G$. We denote
$$
C^1(U):=\bigl\{ f\in C(U) \,\big|\, f\in C^1(\gamma_{[x,y]})\,\,\text{for all distinct}\,\,x,y\in U \bigr\}.
$$
Let $x_t:=\gamma_{[x,y]}(t)$ and $x_s:=\gamma_{[x,y]}(s)$. A function $f:G\to\mathbb{R}$ is said to have {\em local uniform Lipschitz gradient along geodesics on a convex set $U$} if
\begin{equation}\label{eq: L-gradient}
    \begin{aligned}
        f\in C^1(U)\mbox{ and } &\exists L>0\,\,\text{such that}\,\,\forall x,y\in U,\,\forall\,t,s\in[0,1) \\
        &\bigl|Df(x_t)[\gamma_{[x_t,y]}]-Df(x_s)[\gamma_{[x_s,y]}]\bigr|\leq Ld(x_t,x_s).
    \end{aligned}
\end{equation}
Functions having local uniform Lipschitz gradient along geodesics constitutes a 
rich class on manifolds, but for the metric setting 
it is important that the property is only required locally. 
In Section \ref{sec: limitation of regularity} we provide an example where 
the class of $C^1$ functions along geodesics is limited to 
functions with critical points at points where the curvature of the space 
becomes unbounded.
\begin{lemma}[basic growth bound]\label{lm: bg bound}
    Let $f:G\to\mathbb{R}$ have local uniform Lipschitz gradient along geodesics with constant $L>0$ on the convex subset $U\subset G$. Then
    \begin{equation}\label{eq: bg bound}
        f((1-t)x\oplus ty)\leq f(x)+tdf(x)[\gamma_{[x,y]}]+\frac{L}{2}t^2d(x,y)^2\quad \forall x,y\in U, t\in[0,1].
    \end{equation}
    In particular, at $t=1$
    \begin{equation}\label{eq: L-smooth}
        f(y)-f(x)\leq df(x)[\gamma_{[x,y]}]+\frac{L}{2}d(x,y)^2 \qquad\text{for all}\,\,x,y\in U.
    \end{equation}
\end{lemma}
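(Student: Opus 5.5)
We reduce to the fundamental theorem of calculus for the scalar function $g(t):=f(\gamma_{[x,y]}(t))$ on $[0,1]$. Fix distinct $x,y\in U$; the case $x=y$ is trivial, since both sides equal $f(x)$. By convexity of $U$ and $f\in C^1(U)$, $g\in C^1([0,1])$. Proposition \ref{pr: C^1 implies diff} identifies the derivative as
$$
g'(s)=Df(x_s)[\gamma_{[x_s,y]}]\,d(x,y)\quad\text{for } s\in(0,1),
$$
with $g'(0)=df(x)[\gamma_{[x,y]}]=Df(x)[\gamma_{[x,y]}]d(x,y)$ as the right derivative at $0$. Then
$$
g(t)-g(0)=\int_0^t g'(s)\,\mathrm{d}s=t\,g'(0)+\int_0^t\bigl(g'(s)-g'(0)\bigr)\mathrm{d}s .
$$

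Next we bound the integrand using \eqref{eq: L-gradient} with the pair $(t,s)$ replaced by $(s,0)$, both in $[0,1)$:
$$
|g'(s)-g'(0)|=d(x,y)\,\bigl|Df(x_s)[\gamma_{[x_s,y]}]-Df(x)[\gamma_{[x,y]}]\bigr|\le d(x,y)\,L\,d(x_s,x).
$$
The geodesic property gives $d(x_s,x)=s\,d(x,y)$, so the integrand is at most $Ls\,d(x,y)^2$. Integrating from $0$ to $t$ yields $\frac{L}{2}t^2d(x,y)^2$. Since $t\,g'(0)=t\,df(x)[\gamma_{[x,y]}]$, this gives \eqref{eq: bg bound} for $t\in[0,1)$.

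The endpoint $t=1$ follows by continuity of $g$ on $[0,1]$; equivalently, $g'$ is continuous and integrable up to $1$, and the bound on the integrand holds for $s\in[0,1)$, a set of full measure. This gives \eqref{eq: L-smooth}.

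The main point requiring care is the identification of $g'(s)$ with the normalized directional derivative along the truncated geodesic $\gamma_{[x_s,y]}$. This is exactly the form in which the Lipschitz condition \eqref{eq: L-gradient} is stated, and Proposition \ref{pr: C^1 implies diff} supplies it, including the right-derivative interpretation at $s=0$. The other subtlety is that \eqref{eq: L-gradient} only covers parameters in $[0,1)$, which is why the endpoint $t=1$ is handled by the measure-zero and continuity argument above.
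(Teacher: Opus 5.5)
Your proof is correct. It uses the same reduction as the paper: restrict to $g=f\circ\gamma_{[x,y]}$, identify $g'(s)=Df(x_s)[\gamma_{[x_s,y]}]\,d(x,y)$ via Proposition~\ref{pr: C^1 implies diff}, and use \eqref{eq: L-gradient} to show that $g'$ is Lipschitz with constant $L\,d(x,y)^2$. You differ from the paper in how this Lipschitz bound becomes the quadratic term.

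\textbf{The paper's route.} It invokes Rademacher's theorem to get $g''$ almost everywhere. It then writes a second-order Taylor expansion with integral remainder $\int_0^t (t-\tau)g''(\tau)\,\mathrm{d}\tau$ and bounds $|g''|\le L\,d(x,y)^2$ a.e. Writing $g'(\tau)-g'(0)=\int_0^\tau g''$ also quietly uses that the Lipschitz function $g'$ is absolutely continuous. In the paper's display, the remainder is written with $|g''|$ inside an equality; that step is really only an inequality.

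\textbf{Your route.} You apply the fundamental theorem of calculus once to $g\in C^1([0,1])$. You then use \eqref{eq: L-gradient} at the parameter pair $(s,0)$ to get $|g'(s)-g'(0)|\le Ls\,d(x,y)^2$ directly. The two computations agree after a Fubini swap, $\int_0^t\!\int_0^s g''\,\mathrm{d}\tau\,\mathrm{d}s=\int_0^t(t-\tau)g''(\tau)\,\mathrm{d}\tau$. Yours needs no a.e. second derivative and no absolute-continuity step, so it is the more elementary and cleaner argument.

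The paper's proof also establishes, in passing, the reversal identity \eqref{eq: dir grad inv dir}, $Df(z)[\gamma_{[z,x]}]=-Df(z)[\gamma_{[z,y]}]$. It reuses this later in Proposition~\ref{pr: ex of SD direction}, but it is not needed for the lemma itself.
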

\begin{proof}
    Fix $x, y\in U$, and set $\gamma:=\gamma_{[x,y]}\in G_x$. Consider the function
    $$
    g:[0,1]\to\mathbb{R} \qquad t\mapsto f(\gamma(t))=f((1-t)x\oplus ty).
    $$
    Since $\gamma_{[y,x]}(t)=\gamma_{[x,y]}(1-t)$, the chain rule applied to $g:=f\circ\gamma$ yields $\frac{\mathrm{d}}{\mathrm{d}t}g(1-t)=-g'(t)$ for all $t\in(0,1)$. Therefore, we conclude that
    \begin{align}
        Df(z)[\gamma_{[z,x]}]&=\frac{1}{d(x,y)}\frac{\mathrm{d}}{\mathrm{d}t}g(1-\lambda)=\frac{-g'(\lambda)}{d(x,y)}\notag\\
        &=\frac{-Df(z)[\gamma_{[z,y]}]d(x,y)}{d(x,y)}=-Df(z)[\gamma_{[z,y]}].\label{eq: dir grad inv dir}
    \end{align}
    It follows by \eqref{eq: L-gradient} that the function $h(t):=Df(\gamma_{[x,y]}(t))[\gamma_{[\gamma_{[x,y]}(t),y]}]d(x,y)$ is finite and Lipschitz continuous with constant $Ld(x,y)^2$ on $(0,1)$. Moreover, $h(t)=(f\circ\gamma_{[x,y]})'(t)$. By Rademacher's theorem \cite{Rademacher1917} the function $h$ is almost everywhere differentiable on $[0,1]$, yielding the Taylor expansion
    \begin{equation}\label{e:strong upper gradient}
        \begin{aligned}
            g(t)&=g(0)+tg'(0)+\int_0^t(t-\tau)\lim_{s\to\tau}\frac{\bigl|g'(s)-g'(\tau)\bigr|}{|s-\tau|}\,\mathrm{d}\tau\\
            &\leq g(0)+tdf(x)[\gamma_{[x,y]}]+\int_0^t(t-\tau)\lim_{s\to\tau}\frac{L|s-\tau|d(x,y)^2}{|s-\tau|}\,\mathrm{d}\tau\\
            &=g(0)+tdf(x)[\gamma_{[x,y]}]+\frac{L}{2}t^2d(x,y)^2.
        \end{aligned}     
    \end{equation}
    The assertion follows.
\end{proof}
The inequality \eqref{e:strong upper gradient} reveals the similarity and differences of our construction with the {\em strong upper gradients} defined in \cite[Definition 1.2.1]{AmbGigSav2005} (see also \cite[Eq.(2)]{Haj94} and \cite[Eq.(2.10)]{HeiKos98}). Firstly, the sign of the directional derivative is retained, so the inequality is characterizing something different than strong (upper) gradients. Secondly, the inequality is required only along geodesics.

A function $f:G\to\mathbb{R}$ satisfying \eqref{eq: L-smooth} will be said to be {\em locally $L$-smooth with neighborhood $U$ and constant $L$}. The continuity and $C^1$-regularity along geodesics of $f$ is essential, because without it functions can increase in value on arbitrarily small neighborhoods in directions of descent. This is demonstrated in the next example.
\begin{example}[a function can increase in descent directions]
    Let $G=\mathbb{R}^2$ be equipped with the Euclidean distance. Consider the function $f:\mathbb{R}^2\to\mathbb{R}$ defined by
    $$
    f(x_1,x_2)=\begin{cases}
        x_1\,,&\quad\text{if}\,\,x_2=0\\
        \\
        -x_1\,,&\quad\text{else.}
    \end{cases}
    $$
    Take $x:=(0,0)^{\mathrm{T}}$, $y:=(1,0)^{\mathrm{T}}$ and $w^k:=(1,k^{-1})^{\mathrm{T}}$. It is clear that the subslope $S^-f(x)[\gamma_{[x,y]}]=1$, while $S^-f(x)[\gamma_{[x,w^k]}]=-1$. Actually, since $w^k\to y$ we have $d^-f(x)[\gamma_{[x,y]}]=-1$ and $\gamma_{[x,y]}$ is a descent direction of $f$ at $x$. However, $f(x+\lambda y)>f(x)$ for all $\lambda\in(0,1]$.
\end{example}
\begin{theorem}[convergence with local uniform Lipschitz gradient]\label{th: BD with unif L-gradient}
    Let $f:G\to\bbR$, and let $(x^k)_{k\in\bbN}$ be a sequence generated by Algorithm \ref{alg: BD}.
    \begin{enumerate}
        \item[(i)] If $f$ has local uniform Lipschitz gradient along geodesics with constant $L>0$ on the convex set $U\supset\mathrm{lev}_{\leq f(x^0)}(f)$, then Assumption \ref{as: gen assm} \eqref{as: gen assm sd} holds for any descent direction $\gamma_{[x^k,y^k]}$ at all points $x^k\in\mathrm{lev}_{\leq f(x^0)}(f)$, $y^k\in U$, and Assumption \ref{as: gen assm} \eqref{as: gen assm mdfv} is satisfied for all step scalings $\lambda_k$ small enough.
        \item[(ii)] If in addition, $f$ satisfies Assumption \ref{as: gen assm} \eqref{as: gen assm bbb} and $\lambda_kd(x^k,y^k)$, the step lengths, satisfy Assumption \ref{as: gen assm} \eqref{as: gen assm ds}, then the sequence $(x^k)_{k\in\bbN}$ satisfies
        $$
        \lim_{k\to\infty}d(x^{k+1},x^k)=0,
        $$
        and at least one of the following must happen:
        \begin{enumerate}
            \item[(a)] the sequence $(x^k)_{k\in\bbN}$ converges finitely to a point $\xbar\in U$ satisfying
            $$
            d^-f(\xbar)[\gamma_{[\xbar, y]}]\geq 0 \qquad\text{for all}\,\,y\in U;
            $$
            \item[(b)] the sequence of directional subderivatives converges to zero, i.e.,
            $$
            \lim_{k\to\infty} d^-f(x^k)[\gamma_{[x^k, y^k]}]=0.
            $$
        \end{enumerate}
    \end{enumerate}
\end{theorem}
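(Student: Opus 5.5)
Part (i) reduces to Lemma \ref{lm: bg bound}, and part (ii) then follows from Lemma \ref{lm: proto descent}.

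\textbf{Part (i).} Fix $x^k\in\mathrm{lev}_{\leq f(x^0)}(f)\subset U$ and a descent step $y^k\in U$. Since $f\in C^1(U)$, Proposition \ref{pr: C^1 implies diff} shows that $f$ is directionally differentiable at $x^k$ in direction $\gamma_{[x^k,y^k]}$, so $df(x^k)[\gamma_{[x^k,y^k]}]=d^-f(x^k)[\gamma_{[x^k,y^k]}]$. Applying \eqref{eq: bg bound} with $t=\lambda_k$ gives
$$
f((1-\lambda_k)x^k\oplus\lambda_k y^k)\leq f(x^k)+\lambda_k d^-f(x^k)[\gamma_{[x^k,y^k]}]+\tfrac{L}{2}\lambda_k^2 d(x^k,y^k)^2 .
$$
This is Assumption \ref{as: gen assm} \eqref{as: gen assm sd} with $\sigma=1$ and $M=L/2$. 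The assumption is phrased with $\sigma\in(0,1]$, and since $d^-f<0$ on descent steps the same bound also holds for any smaller $\sigma$.

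Monotone decrease: since $y^k\in\Dcal_k$, we have $a_k:=d^-f(x^k)[\gamma_{[x^k,y^k]}]<0$. The right-hand side above is strictly below $f(x^k)$ as soon as $0<\lambda_k<-2a_k/(L d(x^k,y^k)^2)$. Hence Assumption \ref{as: gen assm} \eqref{as: gen assm mdfv} holds for all sufficiently small $\lambda_k$.

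Induction: because $f(x^{k+1})<f(x^k)\leq f(x^0)$, every iterate stays in $\mathrm{lev}_{\leq f(x^0)}(f)\subset U$. The Lipschitz-gradient hypothesis therefore applies at every step, provided each $y^k$ is taken in $U$.

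\textbf{Part (ii).} With (i) giving items \eqref{as: gen assm mdfv} and \eqref{as: gen assm sd}, and the hypotheses giving \eqref{as: gen assm bbb} and \eqref{as: gen assm ds}, all of Assumption \ref{as: gen assm} holds. Lemma \ref{lm: proto descent} then yields $d(x^{k+1},x^k)\to 0$ and the dichotomy between (a) and (b).

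The main obstacle is localizing (a) to $U$, because the emptiness of $\Dcal_K$ must be read relative to directions $y\in U$. I would interpret the algorithm as restricting descent steps to $U$, so that termination means $D^-f(\xbar)[\gamma_{[\xbar,y]}]\geq0$ for all $y\in U\setminus\{\xbar\}$. Multiplying by $d(\xbar,y)>0$ gives exactly (a), with $\xbar=x^K\in\mathrm{lev}_{\leq f(x^0)}(f)\subset U$. The remaining argument of Lemma \ref{lm: proto descent} (telescoping, square-summability, divergence of $\sum\lambda_k d(x^k,y^k)$) is unchanged and gives (b).
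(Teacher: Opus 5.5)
Your proposal is correct and follows the paper's route: part (i) comes from the basic growth bound (Lemma \ref{lm: bg bound}) with $M=L/2$ together with the fact that a negative directional derivative forces decrease for small $\lambda_k$, which you make explicit through the threshold $\lambda_k<-2a_k/(Ld(x^k,y^k)^2)$, and part (ii) is a direct application of Lemma \ref{lm: proto descent}. The localization of (a) to $U$ that you flag is not really an obstacle: with the paper's $\Dcal_k$, termination already gives $d^-f(\xbar)[\gamma_{[\xbar,y]}]\geq 0$ for all $y\in G$, hence in particular for all $y\in U$, with $\xbar=x^K\in\mathrm{lev}_{\leq f(x^0)}(f)\subset U$.
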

\begin{proof}
    \begin{enumerate}
        \item[(i)] We first note that for $f$ directionally differentiable at $x\in U$ and $\gamma:=\gamma_{[x,y]}\in G_x$ with $y\in U$ a descent direction, i.e. $df(x)[\gamma]<0$, for any $t\in(0,1]$ small enough $f(\gamma(t))-f(x)=f((1-t)x\oplus ty)-f(x)<0$. So the sequence $(f(x^k))_{k\in\bbN}$ generated by Algorithm \ref{alg: BD} is monotonically decreasing, and every $x^k$ remains in $\mathrm{lev}_{\leq f(x^0)}(f)$, for all step scalings $\lambda_k$ small enough. The basic growth bound Lemma \ref{lm: bg bound} yields for $\sigma\in(0,1]$
        $$
        f((1-t)x\oplus ty)\leq f(x)+t\sigma df(x)[\gamma_{[x,y]}]+\frac{L}{2}t^2d(x,y)^2
        $$
        at any point $x\in\mathrm{lev}_{\leq f(x^0)}(f)$ for any descent direction $\gamma_{[x,y]}$ with $y\in U$ and $t$ small enough. In particular, if the sequence $(x^k)_{k\in\mathbb{N}}$ remains in $\mathrm{lev}_{\leq f(x^0)}(f)$, then Assumption \ref{as: gen assm} \eqref{as: gen assm sd} is satisfied with $M=L/2$.
        \item[(ii)] This part follows from Lemma \ref{lm: proto descent}, since local uniform Lipschitz gradient along geodesics is a sufficient condition for
        Assumption \ref{as: gen assm} \eqref{as: gen assm mdfv} and \eqref{as: gen assm sd}.
    \end{enumerate}
\end{proof}
The familiar characterization of minima for continuously differentiable functions on a Euclidean space can be derived for a function $f:G\to\mathbb{R}$ provided that $f$ has local uniform Lipschitz gradient along geodesics. Before stating this claim, we first observe that for $x\in\mathrm{gi}\,(G)$ and $y\in G$ we can extend the geodesic $\gamma_{[x,y]}$ to a new geodesic $\widetilde{\gamma}:[-\varepsilon,1]\to G$ for some positive $\varepsilon>0$ such that $\widetilde{\gamma}(0)=x$. We then rescale $\widetilde{\gamma}$ to $\widehat{\gamma}:[0,1]\to G$ such that $\widehat{\gamma}(t_0)=x$ for some $t_0>0$ sufficiently small. The same holds on a convex subset $U\subset G$.
\begin{proposition}\label{pr: zero dervt}
    Let $f:G\to\mathbb{R}$ with $\mathrm{argmin}_G(f)\subset\mathrm{gi}\,(G)$ have local uniform Lipschitz gradient along geodesics with constant $L>0$ on the convex neighborhood $U\subset G$ where $\mathrm{argmin}_G(f)\subset\mathrm{gi}\,(U)$. If $\xbar\in\mathrm{argmin}_G(f)$ then
    \begin{equation}\label{eq: zero dervt}
        df(\xbar)[\gamma_{[\xbar,y]}]=0 \qquad\text{for all}\,\,y\in U.
    \end{equation}
    Conversely, if $f$ is convex and \eqref{eq: zero dervt} holds, then $\xbar\in\mathrm{argmin}_G(f)$.
\end{proposition}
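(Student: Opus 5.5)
The plan is to handle the two directions separately. The forward direction combines the first-order necessary condition of Theorem \ref{th: FNC-P/FSC} with the antisymmetry identity \eqref{eq: dir grad inv dir} from the proof of Lemma \ref{lm: bg bound}. The converse follows from Proposition \ref{pr: dir dervt of cvx fct}.

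\emph{Forward direction.} Let $\xbar\in\mathrm{argmin}_G(f)$ and fix $y\in U\setminus\{\xbar\}$; the case $y=\xbar$ is trivial.

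First, since $f\in C^1(U)$, Proposition \ref{pr: C^1 implies diff} shows that $f$ is directionally differentiable at $\xbar$ along $\gamma_{[\xbar,y]}$. Hence $df(\xbar)[\gamma_{[\xbar,y]}]=d^-f(\xbar)[\gamma_{[\xbar,y]}]$.

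Second, $\xbar$ is a global, hence local, minimizer, so Theorem \ref{th: FNC-P/FSC} gives $df(\xbar)[\gamma_{[\xbar,y]}]\geq 0$.

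Third, for the reverse inequality I use $\xbar\in\mathrm{gi}(U)$. Applied to the point $y\in U\setminus\{\xbar\}$, the definition \eqref{e:gi} yields a nontrivial geodesic $\eta$ starting at $y$ with $\xbar=\eta(t_0)$ for some $t_0\in(0,1)$. Set $w:=\eta(1)$, which lies in $U$. By Remark \ref{rm: truncation of geod} and unique geodesics, $\gamma_{[\xbar,y]}$ is the reversed initial piece of $\eta$, and $\xbar$ is an interior point of the geodesic $\gamma_{[y,w]}=\eta$ with $y,w\in U$.

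Now apply Proposition \ref{pr: C^1 implies diff} to $g=f\circ\eta$ at the interior parameter $t_0$, together with \eqref{eq: dir grad inv dir}. This gives
\[
Df(\xbar)[\gamma_{[\xbar,y]}]=-Df(\xbar)[\gamma_{[\xbar,w]}].
\]
Theorem \ref{th: FNC-P/FSC} applied in direction $w$ gives $df(\xbar)[\gamma_{[\xbar,w]}]\geq 0$. Therefore $df(\xbar)[\gamma_{[\xbar,y]}]\leq 0$, and combined with the second step it equals $0$.

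The main obstacle is this third step. Identity \eqref{eq: dir grad inv dir} was derived only for interior points of a geodesic and only in the two opposite directions along it. So one must verify carefully that the geodesic interior assumption places $\xbar$ strictly inside a geodesic through $y$ lying in $U$. In other words, the extension $w$ must stay in the convex set $U$, which holds because $\eta$ is produced inside $U$, or by shortening it via convexity. Equivalently, one can argue that $g$ is $C^1$ with an interior minimum at $t_0$, so $g'(t_0)=0$, and then translate back through \eqref{eq: df(z)}: $0=g'(t_0)=df(\xbar)[\gamma_{[\xbar,w]}]/(1-t_0)$. This is the cleanest route, and the direction toward $y$ follows in the same way using the reversed parametrization.

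\emph{Converse.} Assume $f$ is convex and \eqref{eq: zero dervt} holds. By directional differentiability, $d^-f(\xbar)[\gamma_{[\xbar,y]}]=0$ for $y\in U$, so Proposition \ref{pr: dir dervt of cvx fct} gives $f(y)\geq f(\xbar)$ for all $y\in U$.

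To extend this to all of $G$, take $y\in G$ and pick $s\in(0,1]$ small enough that $z=\gamma_{[\xbar,y]}(s)\in U$. This is possible when $U$ is a neighborhood of $\xbar$, since $\xbar\in\mathrm{gi}(U)$ is meant as an interior-type point. Convexity then gives
\[
f(\xbar)\leq f(z)\leq (1-s)f(\xbar)+sf(y),
\]
hence $f(y)\geq f(\xbar)$. Therefore $\xbar\in\mathrm{argmin}_G(f)$.
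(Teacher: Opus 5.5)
Your proof is correct and is essentially the paper's argument: the paper extends $\gamma_{[\xbar,y]}$ backwards through $\xbar$ using $\xbar\in\mathrm{gi}(U)$ and applies the one-dimensional Fermat rule to the $C^1$ function $f\circ\widehat{\gamma}$ at its interior minimizer $t_0$, converting $\widehat{f}'(t_0)=0$ into $df(\xbar)[\gamma_{[\xbar,y]}]=0$ via Proposition~\ref{pr: C^1 implies diff} --- this is precisely your ``cleanest route'', and your sign argument (Theorem~\ref{th: FNC-P/FSC} in the two opposite directions together with \eqref{eq: dir grad inv dir}) is a repackaging of it. For the converse the paper merely cites Theorem~\ref{th: FNC-P/FSC}, whose hypothesis asks for nonnegativity for all $y\in G$ rather than only $y\in U$, so your extra local-to-global step via convexity along $\gamma_{[\xbar,y]}$ actually improves on it, provided ``$U$ is a neighborhood of $\xbar$'' is read as part of the converse's hypothesis (you cannot justify it by $\xbar\in\mathrm{gi}(U)$ there, since $\xbar\in\mathrm{argmin}_G(f)$ is what is being proved).
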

\begin{proof}
    Let $\xbar\in\mathrm{argmin}_G(f)$ and $y\in U$. Extend and rescale the geodesic $\gamma_{[\xbar,y]}$ to $\widehat{\gamma}:[0,1]\to U$ such that $\widehat{\gamma}(t_0)=\xbar$ for some $t_0>0$ sufficiently small. Define the function $\widehat{f}:[0,1]\to\mathbb{R}$, $t\mapsto f(\widehat{\gamma}(t))$. Since $f$ has uniform Lipschitz gradient, the composed function $\widehat{f}$ is $C^1$ on the open interval $(0,1)$. Note that $t_0\in(0,1)$ is a global minimizer of $\widehat{f}$ by construction. Proposition \ref{pr: C^1 implies diff} shows that the function $\widehat{h}:(0,1)\to\mathbb{R}$, $t\mapsto df(\widehat{\gamma}(t))[\gamma_{[\widehat{\gamma}(t),y]}]/(1-t)$ satisfies $\widehat{h}(t)=\widehat{f}'(t)$ for all $t\in(0,1)$. Thus it is necessary that $\widehat{h}(t_0)=0$. Hence also $df(\xbar)[\gamma_{[\xbar,y]}]=0$.

    The converse implication follows from Theorem \ref{th: FNC-P/FSC}.
\end{proof}
\subsection{Steepness and the Polyak-Łojasiewicz property}\label{s:max slope PL}
This subsection introduces two key properties for the proof of convergence of the BD method: the {\em Polyak-Łojasiewicz (PŁ) property} and the {\em sufficient decrease condition}.
\begin{definition}[steepness]\label{d:steepness}
    Let $f:G\to\bbR$ and $x\in G$. The {\em steepness} of $f$ at $x$ is defined by
    $$
    |D|f(x):=\sup_{\gamma\in G_x^{\circ}}\bigl|D^+f(x)[\gamma]\bigr|.
    $$
\end{definition}
Note that the steepness can be infinite, and as a mapping on $G$ to the extended reals, $|D|f(\cdot)$ is defined everywhere on $G$. A similar construction that is prevalent is called the {\em descending slope} in \cite[Definition 1.1]{DeGMarTos80} and the {\em absolute gradient} in \cite[Definition 3.1]{OhtaPal15}.  Steepness, in contrast, does not distinguish between ascent or descent. When $(G,d)$ is the Euclidean space $(\mathbb{R}^n,\|\cdot\|)$ and $f:\mathbb{R}^n\to\mathbb{R}$ is differentiable, $|D|f(x)=\|\nabla f(x)\|$.

Steepness is key to a type of Poincaré inequality that is a standard assumption in many applications. In this context see \cite[Eq.(4.3)]{Cheeger99}. Amongst the Poincaré-type inequalities is the Łojasiewicz inequality \cite{Loj63}, and a special case attributed to Polyak \cite{Polyak63}. A function $f:G\to\bbR$ is said to satisfy the Polyak-Łojasiewicz (PŁ) property at a point $x^*$ whenever
\begin{equation}\label{eq: PL}
    \exists\,C>0\mbox{ and }U\subset G~:~  |D|f(x)^{\frac{2}{p-1}}\geq C\Bigl(f(x)-f(x^*)\Bigr) \qquad\text{for all}\,\,x\in U.
\end{equation}
It is well known that in the Euclidean setting that strong convexity implies the PŁ property. In the metric setting, in contrast, both strong convexity and local uniform Lipschitz gradients along geodesics are required. Throughout our development the reference point $x^*$ will always be a local minimum, characterized by $x^*\in \mathrm{argmin}_U(f)$, where $U$ is a convex set containing $x^*$ on its interior. When $U=G$ the point $x^*$ is a global minimum and we denote by $f_{\min}:=\min_G(f)$ the global minimum value of the function $f$.

We collect the assumptions for Algorithm \ref{alg: BD} into one place and we will specify in each result which of these are used.
\begin{assumption}[BD assumptions]\label{as: BD assm}
    Let $(x^k)$ be a sequence generated by Algorithm \ref{alg: BD} initialized with $x^0\in U$, where $U\subset G$ is convex.
    \begin{enumerate}[(a)]
        \item\label{as: BD assm argmin} The function $f$ is bounded below on $U$ and $\mathrm{argmin}_U(f)\neq\emptyset$.
        \item\label{as: BD assm mu-cvx} The function $f$ is strongly convex on $U$ with constant $\mu>0$;
        \item\label{as: BD assm L-gradient} The function $f$ has local uniform Lipschitz gradient along geodesics with constant $L>0$ on $U$.
        \item\label{as: BD assm suff descent} The sequence $(x^k)$ satisfies the sufficient decrease condition \eqref{eq: BD assm suff descent}
        \item\label{as: BD assm PL} The function $f$ satisfies the PŁ property at $x^*\in \mathrm{argmin}_U(f)$ with constant $C_U>0$ on the neighborhood $U$.
        \item\label{as: BD assm sl} The sequence $(x^k)$ satisfies
        \begin{equation}\label{eq: BD assm sl}
            \exists\,\delta>0 ~:~ d(x^k,x^{k+1})^2\leq\frac{1}{\delta}|D|f(x^k)^{\frac{2}{p-1}} \qquad\text{for all}\,\,k\in\bbN.
        \end{equation}
    \end{enumerate}
\end{assumption}
The next lemma is very useful in the derivation of PŁ property and convergence analysis.  The proof follows by straightforward application of the definitions.
\begin{lemma}[directional derivative bound]\label{lm: BD dir-dervt bound}
    Let $f:G\to\bbR$ have local uniform Lipschitz gradient along geodesics on $U$. Then
    \begin{equation}\label{eq: BD dir-dervt bound}
        \bigl|df(x)[\gamma_{[x,y]}]\bigr|\leq|D|f(x)d(x,y) \qquad\text{for all}\,\,x,y\in U\,\,\text{and}\,\,x\neq y.
    \end{equation}
\end{lemma}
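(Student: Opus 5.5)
The plan is to combine three facts: local uniform Lipschitz gradient along geodesics gives directional differentiability, the normalized derivative $Df$ is just $df$ divided by the length of the geodesic, and the steepness is a supremum over exactly these normalized quantities. So the inequality \eqref{eq: BD dir-dervt bound} should fall out once we identify $D^+f(x)[\gamma_{[x,y]}]$ with $Df(x)[\gamma_{[x,y]}]$.

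First, fix $x,y\in U$ with $x\neq y$, so that $\gamma:=\gamma_{[x,y]}\in G_x^{\circ}$. Since $f\in C^1(U)$, $f$ is continuous and $f\circ\gamma\in C^1([0,1])$. Proposition \ref{pr: C^1 implies diff} then shows that $f$ is directionally differentiable at $x$ in direction $\gamma$. By Definition \ref{de: geod slp/dir devt/norm dir devt}, this means
$$
d^+f(x)[\gamma]=d^-f(x)[\gamma]=df(x)[\gamma],
$$
and all of these are finite.

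Next, divide by the length $d(x,y)>0$. This gives $D^+f(x)[\gamma]=Df(x)[\gamma]=df(x)[\gamma]/d(x,y)$. By Definition \ref{d:steepness}, $|D|f(x)$ is the supremum of $|D^+f(x)[\eta]|$ over all $\eta\in G_x^{\circ}$, and $\gamma$ is one of these geodesics. Hence
$$
\bigl|df(x)[\gamma]\bigr|/d(x,y)\leq|D|f(x).
$$
Multiplying through by $d(x,y)$ gives \eqref{eq: BD dir-dervt bound}. If $|D|f(x)=+\infty$, the inequality holds trivially.

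The only delicate point is the identification $d^+f(x)[\gamma]=df(x)[\gamma]$. The superderivative is a $\limsup$ over nearby geodesics $\eta\to\gamma$, whereas $C^1$ regularity along the single geodesic $\gamma$ only controls the slope along $\gamma$ itself. I would rely on the paper's definition of directional differentiability, which Proposition \ref{pr: C^1 implies diff} asserts, and which already includes the equality with $d^\pm$. Should that be questioned, there is a fallback that does not need it. The elementary chain \eqref{eq: eleme ineqs} gives $S^+f(x)[\gamma]\leq d^+f(x)[\gamma]$. If $df(x)[\gamma]\geq0$, then $|df(x)[\gamma]|=S^+f(x)[\gamma]\leq d^+f(x)[\gamma]\leq|d^+f(x)[\gamma]|$. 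If $df(x)[\gamma]<0$, I would instead pass to the reversed direction at $x$ using \eqref{eq: dir grad inv dir}, or use the geodesic extension available for interior points, to turn a negative slope into a positive one and then apply the same argument.
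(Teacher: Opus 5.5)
Your argument is correct and is the ``straightforward application of the definitions'' the paper has in mind. Like the paper, you use Proposition \ref{pr: C^1 implies diff} to get directional differentiability, hence $D^+f(x)[\gamma_{[x,y]}]=df(x)[\gamma_{[x,y]}]/d(x,y)$, and then read the bound off the supremum in Definition \ref{d:steepness}. The fallback is unnecessary, and its negative-sign case would not go through as written: \eqref{eq: dir grad inv dir} holds only at interior points of a geodesic, so reversing direction at the endpoint $x$ requires extending $\gamma_{[x,y]}$ backward through $x$, i.e. $x\in\mathrm{gi}\,(U)$, which the lemma does not assume.
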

We are now in a position to state sufficient conditions for PŁ property on uniformly convex metric spaces.
\begin{proposition}[sufficient conditions for the PŁ property]\label{pr: suff cond for PL}
    Let $f:G\to\bbR$ satisfy Assumption \ref{as: BD assm} (\ref{as: BD assm argmin})-(\ref{as: BD assm L-gradient}). Furthermore, let $U\subset G$ be a convex neighborhood of a point $x^*\in \mathrm{argmin}_U(f)$ where $\mathrm{argmin}_U(f)\subset\mathrm{gi}\,(U)$. Then $f$ satisfies the PŁ property at $x^*$ on $U$ with constant
    $$
    C_U:=\frac{2}{L}\left(\frac{\mu}{2}\right)^{\frac{2}{p-1}}>0.
    $$
\end{proposition}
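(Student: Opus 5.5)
Fix $x\in U$ and write $x^*\in\mathrm{argmin}_U(f)$. The plan is to bound $f(x)-f(x^*)$ from above by a function of $|D|f(x)$ alone. We use strong convexity along the geodesic from $x$ to $x^*$ and the directional derivative bound, then optimize out the distance $d(x,x^*)$. If $x=x^*$ the inequality is trivial, so assume $x\neq x^*$.

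\emph{Step 1 (strong convexity inequality).} Proposition \ref{pr: dir dervt of cvx fct}, applied to the strongly convex $f$ at the base point $x$ toward $y=x^*$, gives
$$
f(x^*)-f(x)\geq d^-f(x)[\gamma_{[x,x^*]}]+\frac{\mu}{2}d(x,x^*)^p .
$$
Strong convexity is only assumed on $U$, but the proof of that proposition uses only points on the geodesic $\gamma_{[x,x^*]}$, which lies in $U$ by convexity. Since $f$ has local uniform Lipschitz gradient on $U$, Proposition \ref{pr: C^1 implies diff} gives directional differentiability. Hence $d^-f(x)[\gamma_{[x,x^*]}]=df(x)[\gamma_{[x,x^*]}]$.

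\emph{Step 2 (bound by steepness).} Lemma \ref{lm: BD dir-dervt bound} yields $-df(x)[\gamma_{[x,x^*]}]\leq |D|f(x)\,r$ with $r:=d(x,x^*)$. Combining with Step 1,
$$
f(x)-f(x^*)\leq |D|f(x)\,r-\frac{\mu}{2}r^p\leq \sup_{s\geq 0}\Bigl(|D|f(x)\,s-\frac{\mu}{2}s^p\Bigr).
$$
This supremum is the Fenchel conjugate of $\frac{\mu}{2}s^p$ evaluated at $|D|f(x)$. It equals a constant times $|D|f(x)^{p/(p-1)}$, namely $(p-1)p^{-p/(p-1)}(\mu/2)^{-1/(p-1)}|D|f(x)^{p/(p-1)}$.

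\emph{Step 3 (reconciling exponents).} This is the main obstacle. The exponent $p/(p-1)$ from Step 2 differs from the target $2/(p-1)$ unless $p=2$, and the stated constant $C_U=\frac{2}{L}(\mu/2)^{2/(p-1)}$ involves $L$. So Step 2 alone cannot be the intended route; the $L$-smoothness bound \eqref{eq: L-smooth} must enter, presumably through a second comparison of $f(x)-f(x^*)$ with $d(x,x^*)$.

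Applied at base point $x^*$, with $df(x^*)[\gamma_{[x^*,x]}]=0$ by Proposition \ref{pr: zero dervt} (which uses $x^*\in\mathrm{gi}(U)$), the bound \eqref{eq: L-smooth} gives $f(x)-f(x^*)\leq \frac{L}{2}r^2$. Steps 1 and 2, after discarding $f(x)-f(x^*)\geq 0$, give $\frac{\mu}{2}r^p\leq |D|f(x)\,r$, that is,
$$
r^{p-1}\leq \tfrac{2}{\mu}|D|f(x).
$$
Combining the two,
$$
f(x)-f(x^*)\leq \frac{L}{2}r^2\leq \frac{L}{2}\Bigl(\tfrac{2}{\mu}\Bigr)^{\frac{2}{p-1}}|D|f(x)^{\frac{2}{p-1}}.
$$
This is exactly \eqref{eq: PL} with $C_U=\frac{2}{L}(\mu/2)^{2/(p-1)}$. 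I would adopt this route; Steps 1 and 2 serve only to produce the estimate $r^{p-1}\leq\frac{2}{\mu}|D|f(x)$.

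The remaining care points are two. First, $x^*$ must be an interior (geodesic-interior) minimizer of $U$, so that the derivative at $x^*$ vanishes in every direction. Second, the case $|D|f(x)=+\infty$ is trivial.
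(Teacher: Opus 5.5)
Your adopted route in Step 3 is correct and is essentially the paper's proof. Strong convexity together with Lemma \ref{lm: BD dir-dervt bound} gives $\frac{\mu}{2}d(x,x^*)^{p-1}\leq |D|f(x)$, while \eqref{eq: L-smooth} at base point $x^*$, with the vanishing derivative from Proposition \ref{pr: zero dervt}, gives $f(x)-f_*\leq\frac{L}{2}d(x,x^*)^2$; combining these yields exactly $C_U$, and the Fenchel-conjugate computation in Step 2 is not needed for this (it gives a valid but different, $L$-free bound with exponent $p/(p-1)$).
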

\begin{proof}
    Since $f$ is strongly convex with constant $\mu>0$, and has local uniform Lipschitz gradient along geodesics with constant $L>0$ on $U$, it follows that
    $$
    f(y)-f(x)\geq df(x)[\gamma_{[x,y]}]+\frac{\mu}{2}d(x,y)^p \qquad\text{for all}\,\,x,y\in U.
    $$
    Taking $y$ to be $x^*\in\mathrm{argmin}_U(f)$ and denoting $f_* = f(x^*)$ yields for all $x\in U$
    $$
    f_*-f(x)\geq df(x)[\gamma_{[x,x^*]}]+\frac{\mu}{2}d(x,x^*)^p.
    $$
    Rearranging terms yields
    $$
    -df(x)[\gamma_{[x, x^*]}]\geq f(x)-f_*+\frac{\mu}{2}d(x,x^*)^p\geq\frac{\mu}{2}d(x,x^*)^p
    $$
    which together with \eqref{eq: BD dir-dervt bound} gives the estimate
    \begin{equation}\label{eq: str-cvx implies PL 2}
        \frac{\mu}{2}d(x,x^*)^{p-1}\leq|D|f(x) \qquad\text{for all}\,\,x\in U.
    \end{equation}
    By $L$-smoothness, for all $x\in U$
    $$
    f(x)-f_*\leq df(x^*)[\gamma_{[x^*,x]}]+\frac{L}{2}d(x^*,x)^2,
    $$
    which together with \eqref{eq: str-cvx implies PL 2} implies
    $$
    f(x)-f_*\leq df(x^*)[\gamma_{[x^*,x]}]+\frac{L}{2}\left(\frac{2}{\mu}|D|f(x)\right)^{\frac{2}{p-1}} \qquad\text{for all}\,\,x\in U.
    $$
    The directional derivative $df(x^*)[\gamma_{[x^*,x]}]$ vanishes due to Proposition \ref{pr: zero dervt}, and the assertion follows from the resulting inequality.
\end{proof}
\subsection{Convergence of BD method}\label{s:convergence}
We now apply the PŁ property and the sufficient decrease condition to obtain convergence of function values and iterates to a local minimum.
\begin{theorem}[convergence of function values with PŁ and sufficient descent]\label{th: cvg func value}
    For $f:G\to\mathbb{R}$ and $U\subset G$ closed and convex, let Assumption \ref{as: BD assm} (\ref{as: BD assm argmin}),(\ref{as: BD assm suff descent}), and (\ref{as: BD assm PL}) hold for the sequence $(x^k)_{k\in\bbN}$ generated by Algorithm \ref{alg: BD} with respective constants $\delta$ and $C_U>0$. Then the function values of the iterates satisfy
    \begin{equation}\label{eq: cvg func value}
        f(x^k)-f_*\leq\Bigl(f(x^0)-f_*\Bigr)\left(1-\frac{C_U}{\delta}\right)^k \qquad\text{for all}\,\,k,
    \end{equation}
    where $f_*:=\min_U(f)$. If, in addition $C_U\in(0,\delta)$, then the sequence of function values converges linearly to the minimum value of $f$ on $U$ with rate given by \eqref{eq: cvg func value}.
\end{theorem}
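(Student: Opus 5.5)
The plan is the standard Polyak--Łojasiewicz contraction argument, carried out entirely with function values; no geometry of $(G,d)$ beyond what is encoded in the assumptions is needed. Write $e_k:=f(x^k)-f_*$ with $f_*=\min_U(f)=f(x^*)$. By Assumption \ref{as: BD assm} (\ref{as: BD assm argmin}) this minimum exists, so $e_k\geq 0$ whenever $x^k\in U$. I will work under the standing assumption of the algorithm that the iterates stay in $U$, the set on which the PŁ inequality is posited. If this is not literally part of the hypotheses, I would note that it follows from monotone decrease together with the convexity and closedness of $U$ in the intended setting.

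The key step combines the two hypotheses at each iterate. The sufficient decrease condition \eqref{eq: BD assm suff descent} gives
\[
e_{k+1}-e_k=f(x^{k+1})-f(x^k)\leq -\tfrac{1}{\delta}|D|f(x^k)^{\frac{2}{p-1}} .
\]
The PŁ property \eqref{eq: PL} at $x^*$ on $U$ with constant $C_U$ gives $|D|f(x^k)^{\frac{2}{p-1}}\geq C_U\,e_k$. Substituting the second bound into the first yields
\[
e_{k+1}\leq \Bigl(1-\tfrac{C_U}{\delta}\Bigr)e_k .
\]
If the algorithm terminates finitely, the iterates are constant from that index on. The bound $e_{k+1}\leq(1-C_U/\delta)e_k$ then still holds, since sufficient decrease is assumed for all $k$; alternatively, $|D|f=0$ there forces $e_k=0$ by PŁ. Either way the recursion is valid for every $k$.

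Induction on $k$ now gives \eqref{eq: cvg func value}, because $1-C_U/\delta\geq 0$. This nonnegativity holds automatically: the displayed inequality together with $e_{k+1}\geq 0$ shows $(1-C_U/\delta)e_k\geq 0$, so whenever some $e_k>0$ we get $C_U\leq\delta$. If instead $e_k=0$, the bound is trivial from that point on, since $e$ is nonincreasing and nonnegative. When $C_U\in(0,\delta)$, the factor $1-C_U/\delta$ lies in $[0,1)$, so $e_k\to 0$ at least linearly at that rate, which is the second claim.

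The main obstacle is bookkeeping rather than analysis. One must ensure the iterates remain in $U$ so that PŁ applies at each $x^k$, and handle a possibly negative factor $1-C_U/\delta$ so the induction does not flip inequalities. I would address both as described: invoke the monotone decrease and membership $x^k\in U$, and use nonnegativity of $e_{k+1}$ to rule out the degenerate sign case.
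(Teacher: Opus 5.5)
Your proof is the paper's proof: combine the sufficient decrease condition with the PŁ inequality at $x^k$ to get $f(x^{k+1})-f_*\leq(1-C_U/\delta)(f(x^k)-f_*)$, then iterate. Your extra handling of the sign of $1-C_U/\delta$ (using $e_{k+1}\geq 0$) is correct and covers a detail the paper passes over.

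One caveat concerns your fallback claim that $x^k\in U$ follows from monotone decrease together with convexity and closedness of $U$. That claim is false. A descent step toward some $y^k\notin U$ can leave $U$ and can even push $f$ below $\min_U(f)$; for example, take $U=[0,1]\subset\mathbb{R}$ and $f(x)=(x-2)^2$, and step from $0.5$ to $1.5$. So $x^k\in U$ must be taken as a hypothesis, as the paper does implicitly here and explicitly in Theorem~\ref{th: main summary 2}(a). Alternatively, it can be derived from a condition such as $\mathrm{lev}_{\leq f(x^0)}(f)\subset U$.
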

\begin{proof}
    A combination of \eqref{eq: BD assm suff descent} and PŁ property yields for all $k\in\bbN$
    $$
    f(x^{k+1})-f(x^k)\leq-\frac{1}{\delta}|D|f(x^k)^{\frac{2}{p-1}}\leq-\frac{C_U}{\delta}\Bigl(f(x^k)-f_*\Bigr).
    $$
    This is equivalent to
    $$
    f(x^{k+1})-f_*\leq\left(1-\frac{C_U}{\delta}\right)\Bigl(f(x^k)-f_*\Bigr) \qquad\text{for all}\,\,k\in\bbN.
    $$
    Iterating this inequality implies the claim.
\end{proof}
\begin{theorem}[convergence of iterate sequence with PŁ property]\label{th: cvg ite}
    Let $(G,d)$ be a complete metric space. For the function $f:G\to\mathbb{R}$, and $U\subset G$ convex, let Assumption \ref{as: BD assm} (\ref{as: BD assm argmin}),(\ref{as: BD assm L-gradient})-(\ref{as: BD assm sl}) hold -- with respective constants $\delta, C_U>0$ -- for the sequence $(x^k)_{k\in\bbN}$ generated by Algorithm \ref{alg: BD}. In addition, let $C_U\in(0,\delta)$. Then the sequence converges to $\xbar\in U$, a minimum of $f$ on $U$, with $R$-linear rate given by
    \begin{equation}\label{eq: cvg ite}
        d(x^k,\xbar)\leq\frac{\sqrt{f(x^0)-f(\xbar)}}{1-\sqrt{1-\dfrac{C_U}{\delta}}}\left(\sqrt{1-\frac{C_U}{\delta}}\right)^{k} \qquad\text{for all}\,\,k\in\Nbb.
    \end{equation}
\end{theorem}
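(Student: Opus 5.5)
The plan is to show that $(x^k)$ is Cauchy with a geometrically summable bound on consecutive step lengths, and then use completeness. First combine Assumption \ref{as: BD assm} (\ref{as: BD assm sl}) with the sufficient decrease condition (\ref{as: BD assm suff descent}). Both are stated in terms of the same quantity $\frac{1}{\delta}|D|f(x^k)^{\frac{2}{p-1}}$, so
$$
d(x^k,x^{k+1})^2\leq\frac{1}{\delta}|D|f(x^k)^{\frac{2}{p-1}}\leq f(x^k)-f(x^{k+1})\leq f(x^k)-f_*,
$$
where $f_*:=f(x^*)$ and the last step uses $f(x^{k+1})\geq f_*$. This requires the iterates to stay in $U$, where $x^*$ minimizes $f$; I would take that as implicit in the standing assumption $x^k\in U$, or else note that $U$ contains the relevant level set.

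Next I would invoke Theorem \ref{th: cvg func value}. Its hypotheses (\ref{as: BD assm argmin}), (\ref{as: BD assm suff descent}) and (\ref{as: BD assm PL}) are all among the ones assumed here, so $f(x^k)-f_*\leq (f(x^0)-f_*)q^{2k}$ with $q:=\sqrt{1-C_U/\delta}\in[0,1)$. Combining with the first step gives $d(x^k,x^{k+1})\leq \sqrt{f(x^0)-f_*}\,q^k$. For $m>k$, the triangle inequality and the geometric series then give
$$
d(x^k,x^m)\leq \sqrt{f(x^0)-f_*}\,\frac{q^k}{1-q}.
$$
Hence the sequence is Cauchy, and by completeness of $(G,d)$ it converges to some $\xbar\in G$. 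Letting $m\to\infty$ and using continuity of the metric yields exactly the bound \eqref{eq: cvg ite}, except that $f(x^0)-f_*$ must still be identified with $f(x^0)-f(\xbar)$.

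The main obstacle is showing that $\xbar\in U$ and that $f(\xbar)=f_*$, i.e. that $\xbar$ is a minimizer on $U$.

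For membership in $U$, I would use closedness of $U$: it is assumed closed in Theorem \ref{th: cvg func value}, and I would adopt that, or else work inside a closed sublevel set contained in $U$.

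For the function value, I would use continuity of $f$ on $U$, which holds because (\ref{as: BD assm L-gradient}) requires $f\in C^1(U)\subset C(U)$. Then $f(\xbar)=\lim_k f(x^k)=f_*$, since $f(x^k)\to f_*$ linearly. Therefore $\xbar\in\mathrm{argmin}_U(f)$, and $f(x^0)-f(\xbar)=f(x^0)-f_*$, which completes the identification in the rate estimate.
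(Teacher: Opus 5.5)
Your proposal is correct and follows the paper's proof essentially step for step: combine (\ref{as: BD assm sl}) with \eqref{eq: BD assm suff descent} to get $d(x^{k+1},x^k)^2\le f(x^k)-f_*$, apply Theorem~\ref{th: cvg func value}, sum the geometric series to obtain a Cauchy sequence, and pass to the limit. The two points you flag are handled no more rigorously in the paper: it simply invokes ``completeness of $(U,d)$'', which tacitly assumes $U$ is closed and $x^k\in U$ just as you do, and it obtains $f(\xbar)=\min_U f$ from continuity of $f$ only later, in the proof of Theorem~\ref{th: main summary 2}.
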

\begin{proof}
    By Assumption Assumption \ref{as: BD assm} (\ref{as: BD assm argmin}) there is an $x^*\in\argmin_U f$ with $f_*=f(x^*)$.
    Combining Assumption \ref{as: BD assm} (\ref{as: BD assm sl}) and \eqref{eq: BD assm suff descent} implies for all $k$
    $$
    d(x^{k+1},x^k)^2\leq\frac{1}{\delta}|D|f(x^k)^{\frac{2}{p-1}}\leq f(x^k)-f(x^{k+1}).
    $$
    Since $f(x^{k+1})\geq f_*$, Theorem \ref{th: cvg func value} yields
    $$
    d(x^{k+1},x^k)^2\leq f(x^k)-f_*\leq\left(1-\frac{C_U}{\delta}\right)^k\Bigl(f(x^0)-f_*\Bigr).
    $$
    Let $c:=\sqrt{f(x^0)-f_*}$ and $w:=1-C_U/\delta$. By the triangle inequality, for any $m\in\bbN$ and $n\in\bbN$,
    $$
    d(x^{m+n},x^m)\leq\sum_{j=m}^{m+n-1}d(x^{i+1},x^i)\leq c\sum_{j=m}^{m+n-1}\sqrt{w}^j.
    $$
    Since $C_U\in(0,\delta)$ we conclude that $w\in(0,1)$ and
    \begin{equation}\label{eq: cvg ite 1}
        d(x^{m+n},x^m)\leq c\sum_{j=m}^{+\infty}\sqrt{w}^j=\frac{c\sqrt{w}^m}{1-\sqrt{w}},
    \end{equation}
    hence the sequence $(x^k)_{k\in\bbN}$ is a Cauchy sequence, and thus convergent by the completeness of $(U,d)$. Let $\xbar\in U$ be the limit of this sequence. Taking the limit as $n\to\infty$ in \eqref{eq: cvg ite 1} gives $d(x^m, \xbar)\leq\frac{c\sqrt{w}^m}{1-\sqrt{w}}$, completing the proof.
\end{proof}
We emphasize that the completeness of the metric space $(G,d)$ (and hence the smaller metric space $(U, d)$) guarantees the existence of the limiting point $\xbar\in U$. However, if the function $f:G\to\mathbb{R}$ is strongly convex on $U$, then even without the completeness of the space $(U,d)$, linear convergence can still be obtained.
\begin{corollary}\label{cl: cvg ite for strongly cvx func}
    For the function $f:G\to\mathbb{R}$, and $U\subset G$ convex,
    let Assumption \ref{as: BD assm} (\ref{as: BD assm argmin}),(\ref{as: BD assm mu-cvx}),(\ref{as: BD assm suff descent}), and (\ref{as: BD assm PL}), hold -- with respective constants $\mu, \delta, C_U>0$ --  for the sequence $(x^k)_{k\in\bbN}$ generated by Algorithm \ref{alg: BD}.  Then  $\mathrm{argmin}_U(f)=\{\bar{x}\}$, and for $f_*:=f(\xbar)$
    \begin{equation}\label{eq: cvg ite for strongly cvx func}
        d(x^k,\bar{x})\leq\sqrt[p]{\frac{2}{\mu}\Bigl(f(x^0)-f_*\Bigr)}\left(\sqrt[p]{1-\frac{C_U}{\delta}}\right)^k \qquad\text{for all}\,\,k\in\Nbb.
    \end{equation}
    If in addition the constant $C_U$ satisfies $C_U\in(0,\delta)$, then the sequence of iterates $(x_k)_{k\in\bbN}$ converges $R$-linearly to $\bar{x}$ with rate given by \eqref{eq: cvg ite for strongly cvx func}.
\end{corollary}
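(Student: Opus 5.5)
We need to prove the corollary: under (a), (b), (d), (e), the minimizer on $U$ is unique and the iterates satisfy \eqref{eq: cvg ite for strongly cvx func}.

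Uniqueness comes first. By Assumption \ref{as: BD assm}(\ref{as: BD assm argmin}) the set $\mathrm{argmin}_U(f)$ is nonempty. Suppose $\bar x\neq\bar x'$ both lie in it. Strong convexity on the convex set $U$ at $t=1/2$ gives
$$
f\bigl(\tfrac12\bar x\oplus\tfrac12\bar x'\bigr)\leq f_*-\tfrac{\mu}{8}d(\bar x,\bar x')^p<f_*.
$$
The midpoint lies in $U$, so this contradicts minimality. Hence $\mathrm{argmin}_U(f)=\{\bar x\}$, and the reference point $x^*$ in the PŁ property must equal $\bar x$.

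Next comes the growth bound. I want $f(y)-f_*\geq \frac{\mu}{2}d(y,\bar x)^p$ for all $y\in U$. I will avoid using Proposition \ref{pr: dir dervt of cvx fct} combined with a first-order condition at $\bar x$, since that would require regularity or that $\bar x$ lie in the geodesic interior. Instead I argue directly along the geodesic from $\bar x$ to $y$, writing $x_t:=(1-t)\bar x\oplus ty$, which stays in $U$. Strong convexity gives
$$
f_*\leq f(x_t)\leq (1-t)f_*+tf(y)-\tfrac{\mu}{2}t(1-t)d(\bar x,y)^p .
$$
Subtracting $f_*$ and dividing by $t>0$ yields $0\leq f(y)-f_*-\frac{\mu}{2}(1-t)d(\bar x,y)^p$. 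Letting $t\searrow0$ gives the growth bound. This is the same computation as in the proof of Proposition \ref{pr: dir dervt of cvx fct}, now restricted to $U$.

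Finally I combine with function-value convergence. The iterates stay in $U$; this is implicit in the setting of Assumption \ref{as: BD assm}, since the PŁ property is used on $U$. Theorem \ref{th: cvg func value} then applies with the hypotheses (a), (d), (e), giving
$$
f(x^k)-f_*\leq \bigl(f(x^0)-f_*\bigr)\Bigl(1-\tfrac{C_U}{\delta}\Bigr)^k .
$$
Combining this with the growth bound at $y=x^k$ gives
$$
d(x^k,\bar x)^p\leq \tfrac{2}{\mu}\bigl(f(x^0)-f_*\bigr)\Bigl(1-\tfrac{C_U}{\delta}\Bigr)^k .
$$
Taking $p$-th roots yields \eqref{eq: cvg ite for strongly cvx func}. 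One point needs care: $1-C_U/\delta$ must be nonnegative for the root to make sense. The sufficient decrease condition and PŁ together give $0\leq f(x^{k+1})-f_*\leq(1-C_U/\delta)(f(x^k)-f_*)$. So either $C_U\leq\delta$, or every $f(x^k)-f_*$ after the first step vanishes, in which case the estimate holds trivially. When $C_U\in(0,\delta)$, the ratio $\sqrt[p]{1-C_U/\delta}$ lies in $(0,1)$, which gives $R$-linear convergence. No completeness is needed, because the limit $\bar x$ is known in advance.

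The main obstacle is the growth bound: deriving it without differentiability and without $\bar x\in\mathrm{gi}(U)$. The direct geodesic argument above handles it.
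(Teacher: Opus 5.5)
Your proof is correct and has the same skeleton as the paper's: a $p$-th order growth bound $f(y)-f_*\geq\frac{\mu}{2}d(y,\bar x)^p$ on $U$, combined with the linear rate for function values from Theorem~\ref{th: cvg func value}, followed by a $p$-th root. The difference is in how you obtain uniqueness and the growth bound.

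The paper invokes Theorem~\ref{th: FNC-P/FSC} for uniqueness and for the first-order condition $df(\bar x)[\gamma_{[\bar x,y]}]\geq 0$ for $y\in U$. It then uses Proposition~\ref{pr: dir dervt of cvx fct} to write $\frac{\mu}{2}d(x^k,\bar x)^p\leq f(x^k)-f(\bar x)-df(\bar x)[\gamma_{[\bar x,x^k]}]$. That route tacitly assumes directional differentiability at $\bar x$, which is not among hypotheses (a), (b), (d), (e). It also applies a result about local minimizers on $G$ to a minimizer over $U$. Since $d^-f(\bar x)[\cdot]$ is a $\liminf$ over nearby geodesics that may leave $U$, where strong convexity gives no control, this is only safe when $\bar x$ is interior to $U$.

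You instead use $f_*\leq f(x_t)$ directly along $[\bar x,y]\subset U$, which in effect needs only the slope condition $S^-f(\bar x)[\gamma_{[\bar x,y]}]\geq 0$. Together with your midpoint argument for uniqueness, this needs nothing beyond the stated assumptions, so your version is the more robust one. The paper's is shorter given its earlier machinery.

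Like the paper, you need $x^k\in U$ for all $k$, and you rightly flag this as implicit. Your handling of $C_U\geq\delta$ is also fine; for $C_U>\delta$ the combined inequality in fact already forces $f(x^0)=f_*$.
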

\begin{proof}
    By Assumption Assumption \ref{as: BD assm} (\ref{as: BD assm argmin}) there is an $\xbar\in\argmin_U f$ with $f_*:=f(\xbar)=\min_U f$. Since $f$ is strongly convex, then by Theorem \ref{th: FNC-P/FSC}, $f$ has a unique minimizer on $U$, $\mathrm{argmin}_U(f)=\{\bar{x}\}$, and $df(\bar{x})[\gamma_{[\bar{x},y]}]\geq 0$ for all $y\in U$. A combination of Proposition \ref{pr: dir dervt of cvx fct} and Theorem \ref{th: cvg func value} thus implies
    \begin{align*}
        \frac{\mu}{2}d(x^k,\bar{x})^p&\leq f(x^k)-f(\bar{x})-df(\bar{x})[\gamma_{[\bar{x},x^k]}]\leq\left(1-\frac{C_U}{\delta}\right)^k\Bigl(f(x^0)-f_*\Bigr),
    \end{align*}
    from which the result follows.
\end{proof}
\subsection{Proofs of main results}\label{sec: proofs main results}
We are now ready to prove the main results. They follow mainly from Lemma \ref{lm: proto descent}, Theorem \ref{th: BD with unif L-gradient}, Theorem \ref{th: cvg func value} and Theorem \ref{th: cvg ite}.

{\em Proof of Theorem \ref{th: main summary 1}.} By Theorem \ref{th: BD with unif L-gradient}(i), the assumption of local uniform Lipschitz gradient along geodesics implies that assumptions (c)-(d) of Lemma \ref{lm: proto descent} hold for all step scalings $\lambda_k$ small enough. Then, as with part (ii) of Theorem \ref{th: BD with unif L-gradient}, assumptions (a) and (c) of Theorem \ref{th: main summary 1} permit application of Lemma \ref{lm: proto descent} to conclude that at least one of (i) and (ii) must happen.

In the event of case (i), Theorem \ref{th: BD with unif L-gradient} shows that $\bar{x}\in G$ satisfies the first-order necessary condition
$$
d^-f(\bar{x})[\gamma_{[\bar{x}, y]}]\geq 0 \qquad\text{for all}\,\,y\in G.
$$
If in addition $f$ is convex, this condition is also sufficient by Theorem \ref{th: FNC-P/FSC} and this point $\bar{x}\in G$ is a global minimizer of $f$. \qed

{\em Proof of Theorem \ref{th: main summary 2}.} With the sufficient decrease condition \eqref{eq: BD assm suff descent} satisfied with constant $\delta>0$, and the PŁ property satisfied with constant $C_U\in(0,\delta)$, Theorem \ref{th: cvg func value} establishes that $f(x^k)\to f_*$ with
$$
f(x^k)-f_*\leq\left(1-\frac{C_U}{\delta}\right)^k\Bigl(f(x^0)-f_*\Bigr) \qquad\text{for all}\,\,k.
$$
Note that this is independent of the convergence of $(x^k)_{k\in\bbN}$.

Next, taking into account the additional assumptions that $(G,d)$ is complete, $f$ has local uniform Lipschitz gradient along geodesics with constant $L>0$ on $U$, and the sequence satisfies \eqref{e:suff iterate descent}, then Theorem \ref{th: cvg ite} establishes that the sequence $(x^k)_{k\in\bbN}$ converges to some point $\bar{x}\in U$ with at least $R$-linear rate given by
$$
d(x^k,\bar{x})\leq\frac{\sqrt{f(x^0)-f(\xbar)}}{1-\sqrt{1-\frac{C_U}{\delta}}}\left(\sqrt{1-\frac{C_U}{\delta}}\right)^{k} \qquad\text{for all}\,\,k\in\Nbb.
$$
The continuity of $f$ implies $f(x^k)\to f(\bar{x})$ and by the uniqueness of the limit, we have $f(\bar{x})=\min_U f$, i.e. $\bar{x}\in\mathrm{argmin}_U(f)$ is a (local) minimizer of $f$ on $U$. \qed

As in Corollary \ref{cl: cvg ite for strongly cvx func}, if $f:G\to\mathbb{R}$ is strongly convex on $U$ with constant $\mu>0$, then the completeness assumption of $(G,d)$ can be dropped, and $\mathrm{argmin}_U(f)=\{\bar{x}\}$ with
$$
d(x^k,\bar{x})\leq\sqrt[p]{\frac{2}{\mu}\Bigl(f(x^0)-f(\xbar)\Bigr)}\left(\sqrt[p]{1-\frac{C_U}{\delta}}\right)^k \qquad\text{for all}\,\,k\in\Nbb.
$$
Proposition \ref{pr: suff cond for PL} establishes the following corollary.
\begin{corollary}
    The conclusion of Theorem \ref{th: main summary 2} holds if the assumption (c) that the P\L  property holds is replaced by the following:
    \begin{enumerate}
        \item[(c')] the initial point $x^0\in U\subset G$ satisfies $\mathrm{lev}_{\leq f(x^0)}(f)\subset\mathrm{gi}\,(U)$, and $f$ is strongly convex on $U$
        with constant $\mu>0$ satisfying
        $$
        C_U:=\frac{2}{L}\left(\frac{\mu}{2}\right)^{\frac{2}{p-1}}\in(0,\delta).
        $$
    \end{enumerate}
\end{corollary}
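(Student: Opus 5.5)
The corollary replaces assumption (c) of Theorem \ref{th: main summary 2} by (c'), so the plan is to show that (c') produces exactly the P\L{} property needed in (c), with the constant $C_U\in(0,\delta)$, and then quote Theorem \ref{th: main summary 2} unchanged. All other hypotheses (a), (b), (d), (e) and completeness are kept as stated. The natural tool is Proposition \ref{pr: suff cond for PL}, whose output constant is precisely $C_U=\frac{2}{L}(\mu/2)^{2/(p-1)}$.

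\textbf{Step 1: hypotheses of Proposition \ref{pr: suff cond for PL}.} I need Assumption \ref{as: BD assm} (\ref{as: BD assm argmin})--(\ref{as: BD assm L-gradient}), plus a point $x^*\in\mathrm{argmin}_U(f)$ with $\mathrm{argmin}_U(f)\subset\mathrm{gi}\,(U)$.
\begin{itemize}
\item Strong convexity on $U$ is given by (c'), which is Assumption \ref{as: BD assm} (\ref{as: BD assm mu-cvx}).
\item The Lipschitz gradient property is given by (d), which is Assumption \ref{as: BD assm} (\ref{as: BD assm L-gradient}).
\item Nonemptiness of $\mathrm{argmin}_U(f)$ comes from (a): the local minimum $x^*$ lies in $U$, and strong convexity on the convex set $U$ makes any local minimizer on $U$ global on $U$. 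Boundedness below on $U$ follows, so Assumption \ref{as: BD assm} (\ref{as: BD assm argmin}) holds.
\item The inclusion $\mathrm{argmin}_U(f)\subset\mathrm{gi}\,(U)$ follows from (c'): any minimizer $\xbar$ on $U$ satisfies $f(\xbar)\le f(x^0)$, so $\xbar\in\mathrm{lev}_{\le f(x^0)}(f)\subset\mathrm{gi}\,(U)$.
\end{itemize}

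\textbf{Step 2: apply the proposition.} Proposition \ref{pr: suff cond for PL} now gives the P\L{} property \eqref{eq: PL} at $x^*$ on $U$ with constant $C_U$. Condition (c') places $C_U$ in $(0,\delta)$, which is exactly assumption (c) of Theorem \ref{th: main summary 2}. Both conclusions then follow verbatim: the linear rate for function values via Theorem \ref{th: cvg func value}, and the R-linear rate for iterates via Theorem \ref{th: cvg ite}.

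\textbf{Main obstacle.} The delicate point is that Proposition \ref{pr: zero dervt}, used inside Proposition \ref{pr: suff cond for PL} to kill the term $df(x^*)[\gamma_{[x^*,x]}]$, is phrased with $\mathrm{argmin}_G(f)$ rather than $\mathrm{argmin}_U(f)$. Its proof is local, though: it only uses the extension of geodesics through $x^*$ inside $U$, which is what $x^*\in\mathrm{gi}\,(U)$ provides, and the fact that $x^*$ minimizes $f$ along that extended geodesic in $U$. I would remark that this argument applies unchanged to minimizers on $U$, which is how Proposition \ref{pr: suff cond for PL} itself already uses it, so no new argument is required.
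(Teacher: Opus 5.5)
Your proposal is correct and follows the paper's route exactly. The paper's proof is simply to invoke Proposition \ref{pr: suff cond for PL}, which yields the P\L{} property at $x^*$ on $U$ with constant $C_U$, and then apply Theorem \ref{th: main summary 2}. Your hypothesis checks make explicit what the paper leaves implicit: using $\mathrm{lev}_{\leq f(x^0)}(f)\subset\mathrm{gi}\,(U)$ to place $\mathrm{argmin}_U(f)$ inside $\mathrm{gi}\,(U)$, and observing that the proof of Proposition \ref{pr: zero dervt} applies equally to minimizers on $U$.
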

\section{Steepest and almost steepest descent}\label{sec: SD method}
We show next that the steepest descent direction, appropriately defined in the right context, exists and provides a direction in which all of the assumptions for convergence can be satisfied. We first state a generic condition for the sufficient descent.
\begin{proposition}\label{pr: suff cond for SD}
    Let $f:G\to\mathbb{R}$ have local uniform Lipschitz gradient along geodesics with constant $L>0$ on the convex subset $U$. Let $x\in U$ and $x^+:=\gamma_{[x,y]}(\lambda)$ for some
    $$
    y\in\mathcal{D}(x):=\bigl\{ y\in U\setminus\{x\} \,\big|\, Df(x)[\gamma_{[x,y]}]<0 \bigr\}.
    $$
    For a given $\delta>0$, let the point $y$ and the step scaling $\lambda\in(0,1]$ satisfy
    \begin{equation}\label{eq: sd condition}
        \alpha(x,y):=-\left(\frac{2+L}{2\delta}\right)\frac{|D|f(x)^{\frac{2}{p-1}}}{df(x)[\gamma_{[x,y]}]}\leq \lambda\leq\beta(x,y):=\frac{|D|f(x)^{\frac{1}{p-1}}}{\sqrt{\delta}d(x,y)}.
    \end{equation}
    Then
    $$
    f(x^+)-f(x)\leq-\frac{1}{\delta}|D|f(x)^{\frac{2}{p-1}}.
    $$
\end{proposition}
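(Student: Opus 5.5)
The plan is to apply the basic growth bound, Lemma \ref{lm: bg bound}, along the geodesic $\gamma_{[x,y]}$ with $t=\lambda$. Both $x,y\in U$ and $U$ is convex, so this gives
$$
f(x^+)-f(x)\leq \lambda\, df(x)[\gamma_{[x,y]}]+\frac{L}{2}\lambda^2 d(x,y)^2 .
$$
Since $y\in\mathcal{D}(x)$, we have $df(x)[\gamma_{[x,y]}]=Df(x)[\gamma_{[x,y]}]\,d(x,y)<0$. Directional differentiability comes from $f\in C^1(U)$ and Proposition \ref{pr: C^1 implies diff}. In particular $\alpha(x,y)$ is well defined and nonnegative. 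The proof then splits into bounding the linear term with the lower bound $\lambda\geq\alpha$ and the quadratic term with the upper bound $\lambda\leq\beta$.

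For the linear term, multiply $\lambda\geq\alpha(x,y)$ by the negative number $df(x)[\gamma_{[x,y]}]$. The inequality reverses and gives
$$
\lambda\, df(x)[\gamma_{[x,y]}]\leq -\frac{2+L}{2\delta}|D|f(x)^{\frac{2}{p-1}}.
$$
For the quadratic term, square $\lambda\leq\beta(x,y)$. This gives $\lambda^2 d(x,y)^2\leq \frac{1}{\delta}|D|f(x)^{\frac{2}{p-1}}$, and therefore
$$
\frac{L}{2}\lambda^2 d(x,y)^2\leq \frac{L}{2\delta}|D|f(x)^{\frac{2}{p-1}}.
$$
Adding the two bounds, the $L$-contributions cancel:
$$
f(x^+)-f(x)\leq -\frac{2+L}{2\delta}|D|f(x)^{\frac{2}{p-1}}+\frac{L}{2\delta}|D|f(x)^{\frac{2}{p-1}}=-\frac{1}{\delta}|D|f(x)^{\frac{2}{p-1}},
$$
which is the claim.

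The computation itself is routine. The step needing care is justifying the ingredients: that $df(x)[\gamma_{[x,y]}]$ exists and is strictly negative, so the division in $\alpha$ and the sign flip are legitimate, and that Lemma \ref{lm: bg bound} applies at $t=\lambda\in(0,1]$. The lemma holds for all $t\in[0,1]$, so no smallness of $\lambda$ is needed. Finiteness of $|D|f(x)$ is only needed to keep $\alpha$ and $\beta$ meaningful, and it is implicit in the hypothesis that some admissible $\lambda$ exists. I would not address whether the interval $[\alpha,\beta]\cap(0,1]$ is nonempty, since that is assumed here.
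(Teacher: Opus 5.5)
Your proof is correct and is essentially the paper's argument. It uses a smoothness upper bound, then $\lambda\geq\alpha(x,y)$ (with the sign flip from $df(x)[\gamma_{[x,y]}]<0$) for the linear term and $\lambda\leq\beta(x,y)$ for the quadratic term, so that the $L$-contributions cancel. The only difference is cosmetic: the paper applies the $t=1$ form \eqref{eq: L-smooth} to the pair $(x,x^+)$, which tacitly needs $d(x,x^+)=\lambda d(x,y)$ and $df(x)[\gamma_{[x,x^+]}]=\lambda\, df(x)[\gamma_{[x,y]}]$ (Remark \ref{rm: truncation of geod}), whereas your use of Lemma \ref{lm: bg bound} at $t=\lambda$ along $\gamma_{[x,y]}$ avoids that truncation step.
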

\begin{proof}
    By the $L$-smoothness and \eqref{eq: sd condition},
    \begin{align*}
        f(x^+)-f(x)&\leq df(x)[\gamma_{[x,x^+]}]+\frac{L}{2}d(x,x^+)^2\\
        &\leq df(x)[\gamma_{[x,x^+]}]+\frac{L}{2\delta}|D|f(x)^{\frac{2}{p-1}}\\
        &\leq-\left(\frac{2+L}{2\delta}\right)|D|f(x)^{\frac{2}{p-1}}+\frac{L}{2\delta}|D|f(x)^{\frac{2}{p-1}}=-\frac{1}{\delta}|D|f(x)^{\frac{2}{p-1}}.
    \end{align*}
\end{proof}
The choice of step scaling $\lambda$ is {\em feasible} when $\alpha(x,y)\leq\beta(x,y)$, which is equivalent to
\begin{equation}\label{eq: sd condition eq}
    Df(x)[\gamma_{[x,y]}]\leq-\left(\frac{2+L}{2\sqrt{\delta}}\right)|D|f(x)^{\frac{1}{p-1}}.
\end{equation}
The step is {\em admissible} as long as $\alpha(x,y)\leq 1$.
We denote by $I_{\mathrm{fea}}:=[\alpha,\beta]$ the interval of feasible step scalings, and by $I_{\mathrm{ad}}:=(0,1]\cap I_{\mathrm{fea}}$ the interval of admissible step scalings. We show next that a {\em steepest } descent step $y\in\mathcal{D}(x)$ always permits admissible step scalings.
\subsection{The steepest descent direction}\label{s:steepest descent}
We now provide guarantees in a metric setting for attainment of the infimum of normalized directional derivatives, and show that such a descent step provides admissible step scalings.
\begin{lemma}\label{lem: core property}
    Let $x\in \mathrm{gi}\,(G)$ and $\gamma_{[x,y]}\in G_x^{\circ}$, and let $f:G\to\mathbb{R}$ be directionally differentiable in the direction $\gamma_{[x,y]}$.
    For each constant $\rho>0$ small enough, there exists a geodesic $\eta\in G_x^{\circ}$ with either $\eta(t)=y$
    or $\eta(1)=\gamma_{[x,y]}(t)$ for some $t\in (0,1]$, satisfying
    $$
    Df(x)[\eta]=Df(x)[\gamma_{[x,y]}] \quad\text{and}\quad d(\eta(0),\eta(1))=\rho.
    $$
\end{lemma}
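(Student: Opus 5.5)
The plan is to split into two cases according to how $\rho$ compares with $d(x,y)$: if $\rho\le d(x,y)$ we truncate $\gamma_{[x,y]}$, and if $\rho>d(x,y)$ we extend it beyond $y$ using $x\in\mathrm{gi}\,(G)$. In both cases the normalized directional derivative is unchanged, by the scaling statements of Remark~\ref{rm: truncation of geod}.

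\emph{Case $\rho\le d(x,y)$.} Set $t:=\rho/d(x,y)\in(0,1]$ and $\eta:=\gamma_{[x,z]}$ with $z:=\gamma_{[x,y]}(t)$. Then $\eta(1)=\gamma_{[x,y]}(t)$ and $d(\eta(0),\eta(1))=t\,d(x,y)=\rho$. Since $\eta(s)=\gamma_{[x,y]}(ts)$, the difference quotients of $f$ along $\eta$ at $x$ are $t$ times those along $\gamma_{[x,y]}$. Hence $\eta$ inherits directional differentiability with $df(x)[\eta]=t\,df(x)[\gamma_{[x,y]}]$, and Remark~\ref{rm: truncation of geod} gives $Df(x)[\eta]=Df(x)[\gamma_{[x,y]}]$.

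\emph{Case $\rho>d(x,y)$.} Here we use the geodesic interior exactly as in the paragraph preceding Proposition~\ref{pr: zero dervt}. Since $x\in\mathrm{gi}\,(G)$, the geodesic $\gamma_{[x,y]}$ extends past $x$, and by uniqueness of geodesics the extension is again geodesic. What is actually needed, however, is a point $w$ with $y$ lying on $\gamma_{[x,w]}$, i.e.\ a geodesic from $x$ passing through $y$ with $d(x,w)=\rho$. I would obtain this as follows.
\begin{enumerate}
\item Choose $\rho$ small enough that $\rho\le d(x,y)$ automatically holds, so this case is vacuous. This is presumably why the statement says ``for each constant $\rho>0$ small enough'', and it is the intended reading.
\item If one wants the extension case anyway, use the reversed geodesic $\gamma_{[y,x]}$ together with the geodesic-interior property at $y$ (or at $x$ applied to $\gamma_{[y,x]}$). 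This produces $w$ with $y=\eta(t)$ for $\eta=\gamma_{[x,w]}$, $t=d(x,y)/d(x,w)$, and $d(x,w)=\rho$ for $\rho$ within the extension range.
\end{enumerate}
In the extension case, $\gamma_{[x,y]}$ is the truncation of $\eta$ at parameter $t$. Remark~\ref{rm: truncation of geod} applied with $\eta$ as the long geodesic gives $S^\pm f(x)[\gamma_{[x,y]}]=t\,S^\pm f(x)[\eta]$. Directional differentiability along $\gamma_{[x,y]}$ therefore forces the slope along $\eta$ to exist, equal to $df(x)[\gamma_{[x,y]}]/t$. Dividing by the lengths $d(x,y)=t\rho$ and $\rho$ respectively gives equality of the normalized derivatives.

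\emph{Main obstacle.} The delicate point is that the paper's $df(x)[\gamma]$ is defined via $d^\pm$, with $\liminf/\limsup$ over nearby geodesics $\eta\to\gamma$, and not merely by the slope $S^\pm$. I would handle it through the map $\eta\mapsto$ truncation at $t$, which is continuous in $d_x$ by uniqueness of geodesics (as in Remark~\ref{rm: truncation of geod}), together with its inverse (extension by factor $1/t$), whose continuity I would argue via uniform convexity~\eqref{eq: (p,c)-space}. This shows that $d^\pm f(x)$ scale by exactly $t$ under truncation, so the normalized quantities agree. If continuity of the extension map proves hard, I would fall back on restricting to $\rho\le d(x,y)$, where only the truncation direction, already in Remark~\ref{rm: truncation of geod}, is needed, combined with the symmetric inequality obtained by viewing $\gamma_{[x,y]}$ as the limit of extensions of truncations.
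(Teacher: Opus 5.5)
Your truncation argument for $\rho\le d(x,y)$, with the extension case made vacuous by taking $\rho$ small and the conclusion drawn from Remark~\ref{rm: truncation of geod}, is essentially the paper's proof: the paper also rescales $\gamma_{[x,y]}$ to length $\rho$ and just cites that remark for $Df(x)[\eta]=Df(x)[\gamma_{[x,y]}]$, without addressing the $d^{\pm}$ subtlety you raise, so you may rely on the remark's last sentence in the same way. Your auxiliary routes are not needed and would not work as written: the geodesic-interior property at $x$ only extends $\gamma_{[y,x]}$ beyond $x$ (extending beyond $y$ would require $y\in\mathrm{gi}\,(G)$, which is not assumed), and uniform convexity does not make geodesics from $x$ near $\gamma_{[x,z]}$ into truncations of geodesics near $\gamma_{[x,y]}$, since in $\mathbb{R}$-trees, which are $2$-uniformly convex, geodesics can branch or fail to extend.
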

\begin{proof}
    By the definition of the geodesic interior, for any $y\in G\setminus\{x\}$ there exists a geodesic $\eta'$ with length $\rho'$ and starting point  $\eta'(0)=y$ that contains the points $\gamma_{[x,y]}(t)$ on its interior for all $t\in [0,1]$. The claim is proved by constructing a geodesic $\eta$ that reverses the direction of $\eta'$, shifts the start to the point $x$, and rescales it to length $\rho\leq \rho'$: $\eta(0)=x$, $d(\eta(0), \eta(1))=\rho\leq \rho'$, and either $y=\eta(t)$ or $\eta(1)=\gamma_{[x,y]}(t)$ for some $t\in (0,1]$. In either case, it follows from Remark \ref{rm: truncation of geod} that $Df(x)[\eta]=Df(x)[\gamma_{[x,y]}]$.
\end{proof}
With this result, we are able to show the existence of the steepest descent direction. As in the Euclidean case, the steepest descent direction can be found by searching over spheres of a fixed radius:
$$
\mathbb{S}^{G_x}_{\rho}:=\bigl\{ \gamma\in G_x \,\big|\, d(\gamma(0),\gamma(1))=\rho \bigr\}.
$$
In comparison to \cite[Theorem 3.2]{DeGMarTos80} in the context of gradient flows, our proof and statement are constructive.
\begin{proposition}[existence of the steepest descent step]\label{pr: ex of SD direction}
    Let $(G,d)$ be boundedly compact,  let $f:G\to\mathbb{R}$ have local uniform Lipschitz gradient along geodesics with constant $L>0$ on the convex subset $U$, let $x\in \mathrm{gi}\,(U)$, and let $\rho>0$ be small enough that $\mathbb{S}^{G_x}_{\rho}\subset \mathrm{gi}\,(U)$. Then the function $Df(x)[\,\cdot\,]$ defined on $G_x^{\circ}$ is continuous, bounded and there exist $\gamma_{\max},\gamma_{\min}\in G_x^{\circ}$ and $y_{\max}, y_{\min}\in\mathbb{S}_{\rho}(x):=\bigl\{ z\in G \,\big|\, d(x,z)=\rho \bigr\}$ such that
    \begin{equation}\label{eq:sa-sd}
    \begin{aligned}
        Df(x)[\gamma_{\max}] &= Df(x)[\gamma_{[x,y_{\max}]}] = \max_{\gamma\in \mathbb{S}^{G_x}_{\rho}}Df(x)[\gamma],\quad\mbox{and}\\
        Df(x)[\gamma_{\min}]&= Df(x)[\gamma_{[x,y_{\min}]}] = \min_{\gamma\in \mathbb{S}^{G_x}_{\rho}}Df(x)[\gamma].
        \end{aligned}
    \end{equation}
    Moreover,
    \begin{equation}\label{eq: steepest descent}
    \mbox{(a)}\, Df(x)[\gamma_{[x,y_{\max}]}]=|D|f(x), \quad\mbox{and}\quad \mbox{(b)}\, Df(x)[\gamma_{[x,y_{\min}]}]=-|D|f(x).
    \end{equation}
\end{proposition}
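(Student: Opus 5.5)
The plan has four steps: continuity of $Df(x)[\cdot]$ on $G_x^\circ$, then boundedness, then attainment of the extrema on the compact sphere $\mathbb{S}_\rho(x)$, and finally the identification of these extrema with $\pm|D|f(x)$ via Lemma \ref{lem: core property}.

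\textbf{Continuity.} Since $f\in C^1(U)$ and $x\in\mathrm{gi}\,(U)$, Proposition \ref{pr: C^1 implies diff} shows that $f$ is directionally differentiable in every direction $\gamma_{[x,y]}$ with $y\in U$. Then $d^-f(x)[\gamma]=d^+f(x)[\gamma]=df(x)[\gamma]$, and I want to show that $Df(x)[\cdot]$ is continuous on $\mathbb{S}^{G_x}_\rho$. Take $y,z\in\mathbb{S}_\rho(x)$ and set $y_s:=\gamma_{[x,y]}(s)$, $z_s:=\gamma_{[x,z]}(s)$ for small $s>0$. Using \eqref{eq: L-gradient} along each geodesic, I get $|Df(y_s)[\gamma_{[y_s,y]}]-Df(x)[\gamma_{[x,y]}]|\le Ls\rho$, and the same bound for $z$.

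It then suffices to compare $Df(y_s)[\gamma_{[y_s,y]}]d(x,y)=(f\circ\gamma_{[x,y]})'(s)$ with the corresponding derivative along the $z$-geodesic. For that I use the integrated form
\[
f(y_s)-f(x)=s\,df(x)[\gamma_{[x,y]}]+O(Ls^2\rho^2)
\]
from Lemma \ref{lm: bg bound}, written in both directions: the upper bound as stated, and the lower bound from the Lipschitz estimate on $h$ in its proof. Doing the same for $z$ gives
\[
|df(x)[\gamma_{[x,y]}]-df(x)[\gamma_{[x,z]}]|\le \frac{|f(y_s)-f(z_s)|}{s}+O(Ls\rho^2).
\]
To control $|f(y_s)-f(z_s)|$, apply Lemma \ref{lm: bg bound} along the geodesic from $y_s$ to $z_s$, whose length is at most $s\,d(y,z)$ by uniform convexity (compare with \eqref{eq: geod btw geods}). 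Together with Lemma \ref{lm: BD dir-dervt bound}, this bounds the difference by $|D|f(y_s)\,s\,d(y,z)+O(s^2)$. Dividing by $s$ and letting $d(y,z)\to0$ then $s\to 0$ gives continuity, provided $|D|f$ is locally bounded.

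\textbf{Boundedness.} Local boundedness of $|D|f$ near $x$ is what I expect to be the main obstacle. My first attempt is to obtain it from \eqref{eq: L-gradient} together with compactness of closed balls: $y\mapsto df(x)[\gamma_{[x,y]}]$ is a limit of difference quotients $(f(\gamma_{[x,y]}(t))-f(x))/t$. By Lemma \ref{lm: bg bound} these quotients converge uniformly in $y\in\mathbb{S}_\rho(x)$, at rate $Lt\rho^2$ in both directions. Each quotient is continuous in $y$, because $f$ is continuous and geodesics depend continuously on their endpoints in a boundedly compact uniquely geodesic space (a limit of geodesics is a geodesic, and geodesics are unique). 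So the uniform limit is continuous, and this route gives continuity directly, bypassing the argument above. Boundedness then follows from continuity on the compact set $\mathbb{S}_\rho(x)$, and this compactness is exactly where bounded compactness is used. By Remark \ref{rm: truncation of geod}, $Df(x)[\cdot]$ depends only on the direction, so every $\gamma\in G_x^\circ$ can be rescaled onto the sphere via Lemma \ref{lem: core property}, and the bound extends to all of $G_x^\circ$.

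\textbf{Attainment.} By Weierstrass, the continuous function $y\mapsto Df(x)[\gamma_{[x,y]}]$ on the compact set $\mathbb{S}_\rho(x)$ attains its maximum at some $y_{\max}$ and its minimum at some $y_{\min}$. Set $\gamma_{\max}:=\gamma_{[x,y_{\max}]}$ and $\gamma_{\min}:=\gamma_{[x,y_{\min}]}$, which gives \eqref{eq:sa-sd}.

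\textbf{Identification with $\pm|D|f(x)$.} Since $D^+f=Df$ on $G_x^\circ$ and every direction can be rescaled to length $\rho$ without changing $Df$ (Lemma \ref{lem: core property}), we get
\[
|D|f(x)=\sup_{\mathbb{S}^{G_x}_\rho}|Df(x)[\cdot]|=\max\{Df(x)[\gamma_{\max}],\,-Df(x)[\gamma_{\min}]\}.
\]
It remains to show symmetry, namely $\min=-\max$. Since $x\in\mathrm{gi}\,(U)$, the geodesic $\gamma_{[x,y]}$ extends backwards beyond $x$ to a point $w$. By \eqref{eq: dir grad inv dir}, $Df(x)[\gamma_{[x,w]}]=-Df(x)[\gamma_{[x,y]}]$, since $x$ is an interior point of $\gamma_{[w,y]}$. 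Rescaling $\gamma_{[x,w]}$ onto the sphere then gives $\min\le -\max$ and $\max\ge-\min$. Hence $\max=-\min=|D|f(x)$, which proves (a) and (b) of \eqref{eq: steepest descent}.
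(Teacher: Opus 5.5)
Your proposal is correct. Its attainment step (Weierstrass on the compact sphere) and its identification step are the paper's. The identification moves every direction onto $\mathbb{S}^{G_x}_\rho$ with Lemma \ref{lem: core property}, then reverses through $x\in\mathrm{gi}\,(U)$ and uses \eqref{eq: dir grad inv dir} to get $\min=-\max$.

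The difference is how you obtain continuity of $Df(x)[\,\cdot\,]$. The paper takes $\eta^k\to\gamma$ on the sphere and squeezes $df(x)[\eta^k]$ between $d^-f(x)[\gamma]=\liminf_{\eta\to\gamma}S^-f(x)[\eta]$ and $d^+f(x)[\gamma]=\limsup_{\eta\to\gamma}S^+f(x)[\eta]$. These coincide because Proposition \ref{pr: C^1 implies diff} declares $f$ directionally differentiable. So in the paper, continuity is built into the definition of directional differentiability, and the real work is delegated to Proposition \ref{pr: C^1 implies diff}, whose proof only examines the single geodesic $\gamma_{[x,y]}$.

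Your uniform-limit argument proves continuity instead of assuming it. The quotients $q_t(y)=(f(\gamma_{[x,y]}(t))-f(x))/t$ are continuous in $y$: $f$ is continuous on $U$, and geodesics depend continuously on their endpoints, which you correctly get from Arzel\`a--Ascoli and uniqueness of geodesics in a boundedly compact space. The two-sided remainder bound $|q_t(y)-df(x)[\gamma_{[x,y]}]|\le\frac{L}{2}t\,d(x,y)^2$ follows from \eqref{eq: L-gradient} exactly as in the proof of Lemma \ref{lm: bg bound}. It is uniform because $L$ does not depend on the geodesic. This costs a few more lines and uses bounded compactness twice. In return it supplies the cross-geodesic regularity the paper takes for granted, and it gives continuity of $y\mapsto Df(x)[\gamma_{[x,y]}]$ on all of $U\setminus\{x\}$, not only on the sphere. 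You still rely on Proposition \ref{pr: C^1 implies diff} for $D^+f=Df$ in the final step, just as the paper does.

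You should delete your first continuity sketch rather than keep it as an alternative. The bound $d(y_s,z_s)\le s\,d(y,z)$ is Busemann convexity, which does not follow from $p$-uniform convexity. It fails in CAT$(\kappa)$ spaces with $\kappa>0$; on a sphere, $d(y_s,z_s)>s\,d(y,z)$. The sketch also needs local boundedness of $|D|f$ near $x$, which is not available at that point. Neither ingredient is needed. For fixed $s$, $|f(y_s)-f(z_s)|/s\to 0$ as $z\to y$ by continuity of $f$ and of geodesics. Letting $s\searrow 0$ afterwards removes the $O(Ls\rho^2)$ term, and this is just your second argument.
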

\begin{proof}
    The proof is constructive. Fix some $\gamma\in\mathbb{S}^{G_x}_{\rho}\subset U$ and let $(\eta^k)_{k\in\bbN}$ be a sequence in $\mathbb{S}^{G_x}_{\rho}$ such that $\eta^k\to\gamma$. Then
    $$
    \lim_{k\to\infty}df(x)[\eta^k]=\lim_{k\to\infty}S^+f(x)[\eta^k]\leq\limsup_{\eta\to\gamma}S^+f(x)[\eta]=d^+f(x)[\gamma]=df(x)[\gamma],
    $$
    and analogously
    $$
    \lim_{k\to\infty}df(x)[\eta^k]=\lim_{k\to\infty}S^-f(x)[\eta^k]\geq\liminf_{\eta\to\gamma}S^-f(x)[\eta]=d^-f(x)[\gamma]=df(x)[\gamma].
    $$
    Therefore $\lim_{k\to\infty}df(x)[\eta^k]=df(x)[\gamma]$. Furthermore,
    $$
    d(\eta^k(0), \eta^k(1))=d(\gamma(0), \gamma(1))=\rho>0\qquad\text{for all}\,\,k\in\bbN,
    $$
    hence
    $$
    \lim_{k\to\infty}Df(x)[\eta^k]=\lim_{k\to\infty}\frac{df(x)[\eta^k]}{d(\eta^k(0), \eta^k(1))}=\frac{df(x)[\gamma]}{d(\gamma(0), \gamma(1))}=Df(x)[\gamma].
    $$
    Thus $Df(x)[\,\cdot\,]$ is continuous on $\mathbb{S}^{G_x}_{\rho}$. Since $(G,d)$ is boundedly compact, so is $(G_x,d_x)$. The set $\mathbb{S}^{G_x}_{\rho}$ is nonempty, bounded and closed in $G_x$, therefore $Df(x)[\,\cdot\,]$ attains its maximum and minimum on $\mathbb{S}^{G_x}_{\rho}$, verifying \eqref{eq:sa-sd}.

    Now, for any $y\in U$, let $z\in \mathrm{gi}\,(U)$ be any point obtained by the positive extension of the geodesic $\gamma_{[y,x]}$ beyond the endpoint $x$ (possible because $x\in \mathrm{gi}(U)$), and construct the corresponding geodesic extending from $x$, $\gamma_{[x,z]}$. For any geodesic constructed in this way \eqref{eq: dir grad inv dir} yields $Df(x)[\gamma_{[x,z]}]=-Df(x)[\gamma_{[x,y]}]$. In particular, let $\gamma_{\max}$ be a maximum of $Df(x)[\cdot]$ on $\mathbb{S}^{G_x}_{\rho}$. Lemma \ref{lem: core property} yields $Df(x)[\mathbb{S}^{G_x}_{\rho}]=Df(x)[G_x^{\circ}]$, and hence
    $$
    Df(x)[\gamma_{\max}]=\max_{\gamma\in \mathbb{S}^{G_x}_{\rho}}Df(x)[\gamma]=\sup_{\gamma\in G_x^\circ}Df(x)[\gamma].
    $$
    The last equality holds by Proposition \ref{pr: C^1 implies diff}. Now define $y=\gamma_{\max}(1)$. Then, for all $\gamma_{\min}\in G_x^\circ$ such that $\gamma_{\min}(1)=z\in\mathrm{gi}\,(G)$ is any point obtained by the positive extension of the geodesic $\gamma_{[y,x]}$ (the reverse of $\gamma_{\max}$) beyond the endpoint $x$, Lemma \ref{lem: core property} and \eqref{eq: dir grad inv dir} together yield
    $$
    \inf_{\gamma\in G_x^\circ}Df(x)[\gamma] = \min_{\gamma\in \mathbb{S}^{G_x}_{\rho}}Df(x)[\gamma] = Df(x)[\gamma_{\min}]=-\sup_{\gamma\in G_x^\circ}Df(x)[\gamma].
    $$
    Again, the first equality holds by Proposition \ref{pr: C^1 implies diff}. The assertion of the equivalence to the steepness follows directly from Definition \ref{d:steepness}.
\end{proof}
The characterization \eqref{eq: steepest descent} of Proposition \ref{pr: ex of SD direction} leads to the natural terminology of the {\em steepest descent} direction at iterate $x^k$ being the direction $\gamma_{[x^k,y^k]}$ where $Df(x^k)[\gamma_{[x^k,y^k]}]=-|D|f(x^k)$. The steepest descent direction $\gamma_{\min}$ characterized by \eqref{eq: steepest descent}(b) is the solution to an optimization problem in Definition \ref{d:steepness}. As with the proximal mapping, this problem has the same form as the original optimization problem \eqref{eq: model prb} and cannot be expected to be any easier to solve, except in special cases.  An {\em approximate} steepest descent direction, however, suffices. This is taken up in the next section. 

The next proposition shows that when the steepest descent direction is taken a suitable constant $\delta>0$ can be chosen in the sufficient decrease condition \eqref{eq: BD assm suff descent} such that the set of admissible step scalings, $I_{\mathrm{ad}}$, is not empty.
\begin{proposition}\label{pr: SD method}
    Let $(G,d)$ be boundedly compact, $p$-uniformly convex with $p\in(1,2]$. Assume that
    \begin{enumerate}[(i)]
        \item\label{pr: SD method i} $f:G\to\mathbb{R}$ is level-bounded and has local uniform Lipschitz gradient along geodesics with constant $L>0$ on the convex subset $U$;
        \item\label{pr: SD method ii} $\mathrm{lev}_{\leq f(x^0)}(f)\subset \mathrm{gi}\,(U)$;
        \item\label{pr: SD method iii} for $x\in\mathrm{lev}_{\leq f(x^0)}(f)$ and $\rho$ any positive constant such that $B_\rho(x)\subset \mathrm{gi}\,(U)$, the point $y\in\mathcal{D}(x)$ satisfies \eqref{eq: steepest descent}(b).
    \end{enumerate}
    Then there exists a uniform constant $\underline{\delta} >0$ such that for all $\delta\geq\underline{\delta} $, the interval $I_{\mathrm{ad}}$ is nonempty, and for any $\lambda\in I_{\mathrm{ad}}$
    $$
    f(x^+)-f(x)\leq-\frac{1}{\delta}|D|f(x)^{\frac{2}{p-1}} \qquad \text{with}\,\,x^+:=\gamma_{[x,y]}(\lambda).
    $$
\end{proposition}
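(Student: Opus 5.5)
The plan is to reduce everything to Proposition \ref{pr: suff cond for SD}: once $I_{\mathrm{ad}}=[\alpha(x,y),\beta(x,y)]\cap(0,1]\neq\emptyset$, the decrease estimate for every $\lambda\in I_{\mathrm{ad}}$ is exactly the conclusion of that proposition. So the task is to find a uniform $\underline\delta$ such that, for all $x\in\mathrm{lev}_{\leq f(x^0)}(f)$ and the steepest descent step $y$ with $d(x,y)=\rho$:
\begin{enumerate}
\item[(1)] the feasibility condition \eqref{eq: sd condition eq} holds;
\item[(2)] $\alpha(x,y)\leq 1$.
\end{enumerate}
We may assume $|D|f(x)>0$, since otherwise $\mathcal{D}(x)=\emptyset$ and there is nothing to prove.

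For (1), Proposition \ref{pr: ex of SD direction} gives $Df(x)[\gamma_{[x,y]}]=-|D|f(x)$. Then \eqref{eq: sd condition eq} reads
$$
|D|f(x)\geq \frac{2+L}{2\sqrt{\delta}}\,|D|f(x)^{\frac{1}{p-1}},
$$
which is equivalent to
$$
|D|f(x)^{\frac{p-2}{p-1}}\leq \frac{2\sqrt{\delta}}{2+L}.
$$
Since $p\in(1,2]$, the exponent $(p-2)/(p-1)\leq 0$. For $p=2$ the condition reduces to $\delta\geq(2+L)^2/4$. For $p<2$ it requires a uniform \emph{lower} bound on $|D|f(x)$, which is not available near critical points. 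Here I would reconsider. The cleaner route is to use a uniform \emph{upper} bound $K:=\sup\{|D|f(x) : x\in\mathrm{lev}_{\leq f(x^0)}(f)\}<\infty$, and I would check whether the inequality direction actually needs an upper or a lower bound. Because of this exponent issue, I expect the paper either intends $p=2$ effectively or uses $K$ together with the restriction $p\leq 2$ to get $|D|f^{1/(p-1)}\leq K^{(2-p)/(p-1)}|D|f$. I would therefore first establish $K<\infty$.

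The argument for $K<\infty$ runs as follows.
\begin{itemize}
\item Level-boundedness and bounded compactness make $\mathrm{lev}_{\leq f(x^0)}(f)$ compact; it is closed because $f$ is continuous on $U$.
\item Lemma \ref{lm: BD dir-dervt bound} and the Lipschitz property \eqref{eq: L-gradient} show that $x\mapsto Df(x)[\gamma_{[x,y]}]$ varies Lipschitz-continuously along geodesics.
\item By Proposition \ref{pr: ex of SD direction}, $|D|f(x)$ is a max over the compact sphere $\mathbb{S}^{G_x}_\rho$ of a continuous function.
\end{itemize}
I would argue that $|D|f$ is bounded on the compact level set by taking $\rho$ fixed and the bounded set $\bigcup_x B_\rho(x)\subset U$. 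Concretely, $|df(x)[\gamma_{[x,y]}]|$ is controlled by $|f(y)-f(x)|+\tfrac L2\rho^2$, by applying \eqref{eq: L-smooth} in both directions together with \eqref{eq: dir grad inv dir}, and $f$ is bounded on the compact closure of that neighborhood. This yields $K\leq (2\sup|f|+L\rho^2/2)/\rho$.

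For (2), I would write
$$
\alpha(x,y)=\frac{2+L}{2\delta}\,\frac{|D|f(x)^{\frac{2}{p-1}}}{|D|f(x)\,\rho}=\frac{2+L}{2\delta\rho}\,|D|f(x)^{\frac{3-p}{p-1}}\leq \frac{2+L}{2\delta\rho}\,K^{\frac{3-p}{p-1}}.
$$
The exponent $(3-p)/(p-1)$ is positive, so the upper bound $K$ controls this term. Then $\underline\delta$ is taken as the maximum of $(2+L)K^{(3-p)/(p-1)}/(2\rho)$ and the bound from (1), which for $p=2$ is $(2+L)^2/4$. The main obstacle is (1) when $p<2$. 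I would first try to verify whether the intended reading combines $\beta$'s normalization with $K$. Failing that, I would restrict to iterates with $|D|f(x)$ bounded below, since otherwise near-criticality already gives termination or convergence.
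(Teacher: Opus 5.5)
Your overall plan is the paper's: reduce to Proposition \ref{pr: suff cond for SD}, get a uniform bound $K$ on $|D|f$ over the level set, and choose $\underline\delta$ so that \eqref{eq: sd condition eq} holds and $\alpha\le 1$. Your step (2) is exactly \eqref{eq: SD method delta 1}(b). The gap is in step (1), and it comes from an algebra slip. Dividing $|D|f(x)\geq\frac{2+L}{2\sqrt{\delta}}|D|f(x)^{\frac{1}{p-1}}$ by $|D|f(x)>0$ gives
$$
\frac{2+L}{2\sqrt{\delta}}\,|D|f(x)^{\frac{2-p}{p-1}}\leq 1 .
$$
The exponent is $\frac{1}{p-1}-1=\frac{2-p}{p-1}\geq 0$, not $\frac{p-2}{p-1}$. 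So for $p<2$ feasibility requires an \emph{upper} bound on $|D|f(x)$, not a lower bound, and your claim that (1) breaks down near critical points is false. As written, your proposal fixes $\underline\delta$ only for $p=2$. Your fallback of restricting to iterates with $|D|f$ bounded away from zero would prove a weaker statement than the one claimed.

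You already wrote down the inequality that closes this step: $|D|f(x)^{\frac{1}{p-1}}\leq K^{\frac{2-p}{p-1}}|D|f(x)$ for $0\leq|D|f(x)\leq K$, which holds precisely because $p\leq 2$. It suffices to take
$$
\underline\delta\geq\max\Bigl\{\tfrac{(2+L)^2}{4}K^{\frac{2(2-p)}{p-1}},\ \tfrac{2+L}{2\rho}K^{\frac{3-p}{p-1}}\Bigr\},
$$
which is the paper's condition \eqref{eq: SD method delta 1}(a)--(b); the paper's (a) reads $s\geq\frac{2+L}{2\sqrt{\underline\delta}}s^{\frac{1}{p-1}}$ for all $0\leq s\leq\overline{D}$. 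Then $0<\alpha\leq\beta$ and $\alpha\leq 1$, so $\alpha\in I_{\mathrm{ad}}$, and Proposition \ref{pr: suff cond for SD} gives the decrease for every $\lambda\in I_{\mathrm{ad}}$.

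Your argument for $K<\infty$ is more careful than the paper's. The paper only observes that $|D|f$ is pointwise finite on the compact level set and then asserts a uniform bound, which does not follow without some semicontinuity. Your estimate via \eqref{eq: L-smooth} gives an explicit bound, and one direction suffices. By Proposition \ref{pr: ex of SD direction}, $|D|f(x)=-\min_{\gamma\in\mathbb{S}^{G_x}_\rho}Df(x)[\gamma]$, and $-df(x)[\gamma_{[x,y]}]\leq f(x)-f(y)+\frac{L}{2}\rho^2$. This yields $K\leq(2\sup|f|+\frac{L}{2}\rho^2)/\rho$, with the supremum taken over the compact set $\bigcup_{x}B_\rho(x)\subset U$.
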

Before moving to the proof, note that assumption \eqref{pr: SD method ii} of Proposition \ref{pr: SD method} is typically replaced with a {\em coercivity} assumption, namely, that the value of the function $f$ diverges to infinity as the argument approaches the boundary of the set $U$.
\begin{proof}
    Since $V:=\mathrm{lev}_{\leq f(x^0)}(f)\subset \mathrm{gi}\,(U)$ and $f$ is level-bounded, by Proposition \ref{pr: ex of SD direction}, for any point $x\in V$, the steepness $|D|f(x)$ is finite, i.e. the function $|D|f(\,\cdot\,):V\to\mathbb{R}$, $x\mapsto|D|f(x)$ is pointwise finite. Since $(G,d)$ is boundedly compact, we conclude that there is a uniform upper bound
    $$
    |D|f(x)\leq\overline{D}<+\infty \qquad\text{for all}\,\,x\in V.
    $$
    Since $p\in(1,2]$ we have $1/(p-1)\geq 1$. Thus, we can fix a $\underline{\delta}>0$ sufficiently large such that
    \begin{equation}\label{eq: SD method delta 1}
        \begin{aligned}
            &\mbox{(a)}\quad  s\geq\left(\frac{2+L}{2\sqrt{\underline{\delta}}}\right)s^{\frac{1}{p-1}} \quad\text{for all}\,\,0\leq s\leq\overline{D}\\
            &\mbox{(b)}\quad \left(\frac{2+L}{2\rho\underline{\delta}}\right)\overline{D}^{\frac{2}{p-1}-1}\leq 1.
        \end{aligned}
    \end{equation}
    It is obvious that \eqref{eq: SD method delta 1} remains valid for all $\delta\geq\underline{\delta}$ once $\overline{D}$ is fixed.

    Let $\delta\geq\underline{\delta}$ and $y\in\mathcal{D}(x)$ be chosen such that \eqref{eq: steepest descent}(b) holds. Inequality \eqref{eq: SD method delta 1}(a) yields
    $$
    -|D|f(x)\leq-\left(\frac{2+L}{2\sqrt{\delta}}\right)|D|f(x)^{\frac{1}{p-1}},
    $$
    which implies $\alpha\leq\beta$ and $I_{\mathrm{fea}}\neq\emptyset$. Moreover, \eqref{eq: SD method delta 1}(b) implies
    $$
    \alpha=\left(\frac{2+L}{2\delta}\right)\frac{|D|f(x)^{\frac{2}{p-1}-1}}{d(x,y)}=\left(\frac{2+L}{2\rho\delta}\right)|D|f(x)^{\frac{2}{p-1}-1}\leq 1.
    $$
    Therefore, the interval of admissible step scalings $I_{\mathrm{ad}}:=(0,1]\cap[\alpha,\beta]\neq\emptyset$. Choosing $\lambda\in I_{\mathrm{ad}}$ yields $f(x^+)-f(x)\leq-(1/\delta)|D|f(x)^{\frac{2}{p-1}}$, and in particular $f(x^+)<f(x)$ as well as $x^+\in V$.
\end{proof}
The proof of Theorem \ref{th:suff descent - suff} now follows directly from  Proposition \ref{pr: SD method}. We emphasize that the constants $\overline{D}$ and $\underline{\delta}$ are fixed, once the initial point $x^0$ is chosen. In particular, $I_{\mathrm{ad}}^k$ is nonempty for each iterate $k$.
\subsection{Almost steepest descent directions}\label{s:inexact}
As already noted, the steepest descent direction $\gamma_{\min}$ characterized by \eqref{eq: steepest descent} is the solution to an optimization problem which has the same form as the original optimization problem. We develop in this section an {\em almost} steepest descent method that still satisfies the sufficient decrease condition \eqref{eq: BD assm suff descent} at least up to the $K$-th iterate, with error estimates for $f(x^K)$ as well as $x^K$. The accuracy can be measured either relatively or absolutely.  For a relative estimate, the descent step $y^k\in\mathcal{D}_k$ is chosen such that
\begin{equation}\label{eq: rel error}
    Df(x^k)[\gamma_{[x^k,y^k]}]=-r_k|D|f(x^k) \qquad\text{for all}\,\,k\in\bbN
\end{equation}
with rate $r_k\in(0,1]$. An absolute estimate takes the form
\begin{equation}\label{eq: abs error}
    \varepsilon_k:=Df(x^k)[\gamma_{[x^k,y^k]}]+|D|f(x^k)=(1-r_k)|D|f(x^k) \qquad\text{for all}\,\,k\in\bbN.
\end{equation}
\begin{lemma}\label{lm: ID method 1}
    Let $(G,d)$ be boundedly compact, $p$-uniformly convex with $p\in(1,2]$. Assume that
    \begin{enumerate}[(i)]
        \item $f:G\to\mathbb{R}$ is level-bounded and has local uniform Lipschitz gradient along geodesics with constant $L>0$ on the convex subset $U$;
        \item $\mathrm{lev}_{\leq f(x^0)}(f)\subset \mathrm{gi}\,(U)$ with distance at least $\rho>0$ to the boundary of $U$: i.e. $\bigcup_{x\in \mathrm{lev}_{\leq f(x^0)}(f)} B_\rho(x)\subset \mathrm{gi}\,(U)$;
        \item for $x\in\mathrm{lev}_{\leq f(x^0)}(f)$ the step $y\in\mathcal{D}(x)$ satisfies
        \begin{equation}\label{eq: ID method y^k}
            Df(x)[\gamma_{[x,y]}]=-r|D|f(x)\quad\text{and}\quad d(x,y)=\rho
        \end{equation}
        with rate $r\in(0,1]$.
    \end{enumerate}
    Then there exists a uniform constant $\underline{\delta}_r>0$ such that for all $\delta\geq\underline{\delta}_r$, the interval $I_{\mathrm{ad}}$ is nonempty, and for any $\lambda\in I_{\mathrm{ad}}$
    $$
    f(x^+)-f(x)\leq-\frac{1}{\delta}|D|f(x)^{\frac{2}{p-1}} \qquad \text{with}\,\,x^+:=\gamma_{[x,y]}(\lambda).
    $$
\end{lemma}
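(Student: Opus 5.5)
The plan is to mirror the proof of Proposition \ref{pr: SD method}, replacing the exact steepest descent identity $Df(x)[\gamma_{[x,y]}]=-|D|f(x)$ by the relative version \eqref{eq: ID method y^k} and absorbing the factor $r$ into the choice of $\underline{\delta}_r$.

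\emph{Step 1: a uniform bound on the steepness.} On $V:=\mathrm{lev}_{\leq f(x^0)}(f)\subset\mathrm{gi}\,(U)$, Proposition \ref{pr: ex of SD direction} shows that $|D|f(x)$ is finite at every $x\in V$; here assumption (ii) guarantees $\mathbb{S}^{G_x}_\rho\subset\mathrm{gi}\,(U)$. Since $f$ is level-bounded and $(G,d)$ is boundedly compact, $V$ is relatively compact. Exactly as in the proof of Proposition \ref{pr: SD method}, this yields a constant $\overline{D}<+\infty$ with $|D|f(x)\leq\overline{D}$ for all $x\in V$. I will take this step verbatim from that proof.

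\emph{Step 2: choice of $\underline{\delta}_r$.} Since $p\in(1,2]$, we have $\frac{1}{p-1}\geq 1$, so $s^{1/(p-1)}\leq \overline{D}^{\frac{1}{p-1}-1}s$ for $0\leq s\leq\overline{D}$. Choose $\underline{\delta}_r$ large enough that
\[
\text{(a)}\;\; r\,s\geq\Bigl(\tfrac{2+L}{2\sqrt{\underline{\delta}_r}}\Bigr)s^{\frac{1}{p-1}}\ \ \forall\,0\leq s\leq\overline{D},
\qquad
\text{(b)}\;\; \Bigl(\tfrac{2+L}{2r\rho\,\underline{\delta}_r}\Bigr)\overline{D}^{\frac{2}{p-1}-1}\leq 1 .
\]
Condition (a) holds as soon as $\sqrt{\underline{\delta}_r}\geq\frac{2+L}{2r}\overline{D}^{\frac{1}{p-1}-1}$. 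Both conditions are monotone in $\delta$, so they persist for every $\delta\geq\underline{\delta}_r$. One could take $\underline{\delta}_r=\underline{\delta}/r^2$ with $\underline{\delta}$ from Proposition \ref{pr: SD method}, which works because $r\leq 1$.

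\emph{Step 3: nonemptiness of $I_{\mathrm{ad}}$ and descent.} First, $f$ is directionally differentiable along $\gamma_{[x,y]}$, so $df(x)[\gamma_{[x,y]}]=\rho\,Df(x)[\gamma_{[x,y]}]=-r\rho|D|f(x)$. Condition (a) with $s=|D|f(x)$ gives $Df(x)[\gamma_{[x,y]}]=-r|D|f(x)\leq-\frac{2+L}{2\sqrt\delta}|D|f(x)^{1/(p-1)}$, which is the feasibility criterion \eqref{eq: sd condition eq}; hence $\alpha\leq\beta$. Next,
\[
\alpha=\tfrac{2+L}{2\delta}\,\tfrac{|D|f(x)^{\frac{2}{p-1}}}{r\rho|D|f(x)}=\tfrac{2+L}{2r\rho\delta}|D|f(x)^{\frac{2}{p-1}-1}\leq 1,
\]
which follows from (b) because $\frac{2}{p-1}-1\geq 0$ and $|D|f(x)\leq\overline{D}$. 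Therefore $I_{\mathrm{ad}}\neq\emptyset$.

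For $\lambda\in I_{\mathrm{ad}}$, the point $y$ lies in $B_\rho(x)\subset U$, so Proposition \ref{pr: suff cond for SD} applies and yields the claimed decrease.

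\emph{Main obstacle.} The delicate point is the degenerate case $|D|f(x)=0$. There no $y\in\mathcal{D}(x)$ exists, so it is excluded by hypothesis (iii). I also need to make sure the division defining $\alpha$ is legitimate; it is, because $df(x)[\gamma_{[x,y]}]<0$.

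A further issue is whether Step 1 really gives uniform boundedness from pointwise finiteness. If the argument of Proposition \ref{pr: SD method} is deemed insufficient, I would instead bound $|D|f$ directly. The Lipschitz-gradient condition \eqref{eq: L-gradient} along geodesics, evaluated over the compact set $\overline{V}$ together with its $\rho$-neighborhood, would provide that bound.
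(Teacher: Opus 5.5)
Your proposal is correct and follows the paper's proof essentially line for line: the same uniform bound $\overline{D}$ on the steepness over $\mathrm{lev}_{\leq f(x^0)}(f)$, the same two conditions on $\underline{\delta}_r$ (your (a) and (b) are exactly \eqref{eq: ID method delta 1} and \eqref{eq: ID method delta 2}), and the same verification that $\alpha\leq\beta$ and $\alpha\leq 1$ before applying Proposition \ref{pr: suff cond for SD}. Your explicit choice $\underline{\delta}_r=\underline{\delta}/r^2$ and your remark on the degenerate case $|D|f(x)=0$ are harmless additions that the paper leaves implicit.
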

\begin{proof}
    There is a uniform upper bound
    $$
    |D|f(x)\leq\overline{D}<+\infty \qquad\text{for all}\,\,x\in V:=\mathrm{lev}_{\leq f(x^0)}(f).
    $$
    Now we fix a $\underline{\delta}_r>0$ sufficiently large such that
    \begin{equation}\label{eq: ID method delta 1}
        rs\geq\left(\frac{2+L}{2\sqrt{\underline{\delta}_r}}\right)s^{\frac{1}{p-1}} \qquad\text{for all}\,\,0\leq s\leq\overline{D}
    \end{equation}
    and
    \begin{equation}\label{eq: ID method delta 2}
        \left(\frac{2+L}{2r\rho\underline{\delta}_r}\right)\overline{D}^{\frac{2}{p-1}-1}\leq 1.
    \end{equation}
    Then for all $\delta\geq\underline{\delta}_r$ we have
    $$
    Df(x)[\gamma_{[x,y]}]=-r|D|f(x)\leq-\left(\frac{2+L}{2\sqrt{\delta}}\right)|D|f(x)^{\frac{1}{p-1}}
    $$
    as well as
    $$
    \alpha=\left(\frac{2+L}{2\delta}\right)\frac{|D|f(x)^{\frac{2}{p-1}-1}}{rd(x,y)}=\left(\frac{2+L}{2r\rho\delta}\right)|D|f(x)^{\frac{2}{p-1}-1}\leq 1.
    $$
    Thus $I_{\mathrm{ad}}\neq\emptyset$ and choosing $\lambda\in I_{\mathrm{ad}}$ yields
    $$
    f(x^+)-f(x)\leq-\frac{1}{\delta}|D|f(x)^{\frac{2}{p-1}}.
    $$
    In particular,  $f(x^+)<f(x)$ as well as $x^+\in V$.
\end{proof}
In order to apply Lemma \ref{lm: ID method 1} in each step $k\in\bbN$, we assume that the sequence $(r_k)_{k\in\bbN}$ is bounded away from zero.
\begin{theorem}[almost steepest descent method: relative errors]\label{th: ID method 1}
    Let $(G,d)$ be boundedly compact, $p$-uniformly convex with $p\in(1,2]$. Assume that
    \begin{enumerate}[(i)]
        \item $f:G\to\mathbb{R}$ is level-bounded and has local uniform Lipschitz gradient along geodesics with constant $L>0$ on the convex subset $U$;
        \item $\mathrm{lev}_{\leq f(x^0)}(f)\subset \mathrm{gi}\,(U)$ with distance at least $\rho>0$ to the boundary of $U$: i.e. $\bigcup_{x\in \mathrm{lev}_{\leq f(x^0)}(f)} B_\rho(x)\subset \mathrm{gi}\,(U)$;
        \item in each step $k$, the direction $y^k\in\mathcal{D}_k$ satisfies
        $$
        Df(x^k)[\gamma_{[x^k,y^k]}]=-r_k|D|f(x^k) \quad\text{and}\quad d(x^k,y^k)=\rho;
        $$
        \item the sequence $(r_k)_{k\in\bbN}$ is bounded away from zero, i.e. there exist $r>0$ such that $0<r<r_k\leq 1$ for all $k\in\bbN$.
    \end{enumerate}
    Then there exists a uniform constant $\underline{\delta}_r>0$ such that, for all $\delta\geq\underline{\delta}_r$ and all $k\in\bbN$, the set $I_{\mathrm{ad}}^k\neq\emptyset$, and when $\lambda_k\in I_{\mathrm{ad}}^k$ for all $k$, the sufficient descent condition holds:
    $$
    f(x^{k+1})-f(x^k)\leq-\frac{1}{\delta}|D|f(x^k)^{\frac{2}{p-1}} \qquad\text{for all}\,\,k.
    $$
\end{theorem}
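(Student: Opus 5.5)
The plan is to reduce Theorem \ref{th: ID method 1} to Lemma \ref{lm: ID method 1}, applied iteratively along the sequence with a single constant that does not depend on $k$. The only $k$-dependence in the lemma enters through the rate $r$ in the two defining inequalities \eqref{eq: ID method delta 1} and \eqref{eq: ID method delta 2}. Both become harder to satisfy as $r$ decreases, so I would fix the lower bound $r$ from assumption (iv) and define $\underline{\delta}_r$ exactly as in the proof of Lemma \ref{lm: ID method 1} using this $r$.

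\textbf{Step 1.} First fix $\overline{D}$. Set $V:=\mathrm{lev}_{\leq f(x^0)}(f)$. By level-boundedness, bounded compactness and Proposition \ref{pr: ex of SD direction}, the steepness is finite at every point of $V$. As in the proof of Proposition \ref{pr: SD method}, this yields a uniform bound $|D|f(x)\leq\overline{D}$ on $V$. The bound depends only on $x^0$.

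\textbf{Step 2.} Choose $\underline{\delta}_r$ so that \eqref{eq: ID method delta 1} and \eqref{eq: ID method delta 2} hold with the fixed $r$. Since $r<r_k$ for every $k$, the same inequalities hold with $r$ replaced by $r_k$:
\begin{itemize}
\item $r_ks\geq rs$ for $s\geq0$;
\item $(2+L)/(2r_k\rho\delta)\leq(2+L)/(2r\rho\delta)$.
\end{itemize}
Monotonicity in $\delta$ then gives the same conclusion for all $\delta\geq\underline{\delta}_r$.

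\textbf{Step 3.} Run an induction on $k$ with the hypothesis $x^k\in V$. The base case $x^0\in V$ is trivial. If $x^k\in V$, assumptions (ii)--(iii) place us exactly in the setting of Lemma \ref{lm: ID method 1} with rate $r_k$. Its proof, run with the constants from Step 2, shows $I^k_{\mathrm{ad}}\neq\emptyset$. For any $\lambda_k\in I_{\mathrm{ad}}^k$, Proposition \ref{pr: suff cond for SD} then gives
\[
f(x^{k+1})-f(x^k)\leq-\tfrac{1}{\delta}|D|f(x^k)^{\frac{2}{p-1}}.
\]
In particular $f(x^{k+1})\leq f(x^k)\leq f(x^0)$, so $x^{k+1}\in V$ and the induction closes.

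\textbf{Main obstacle.} The main point is uniformity. One must check that neither $\overline{D}$ nor $\rho$ varies with $k$. Both are fixed by $x^0$ and assumption (ii), which is exactly what keeps every iterate inside $V$.

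A minor wrinkle concerns iterations where $|D|f(x^k)=0$. There $\mathcal{D}_k=\emptyset$ and the algorithm stops, so no step scaling needs to be chosen. The assertion is only needed for steps actually taken, so this case causes no trouble.
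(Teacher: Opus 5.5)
Your proposal is correct and follows essentially the same route as the paper. You fix $\overline{D}$ on $\mathrm{lev}_{\leq f(x^0)}(f)$, choose $\underline{\delta}_r$ from the two inequalities of Lemma \ref{lm: ID method 1} using the lower bound $r$, and use $r_k>r$ to get $\alpha_k\le\beta_k$ and $\alpha_k\le 1$ for every $k$, with your explicit induction keeping $x^k$ in the level set making precise what the paper leaves implicit in ``the rest follows from Lemma \ref{lm: ID method 1}''.
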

\begin{proof}
    For the upper bound
    $$
    |D|f(x)\leq\overline{D}<+\infty \qquad\text{for all}\,\,x\in V:=\mathrm{lev}_{\leq f(x^0)}(f)
    $$
    we fix a $\underline{\delta}_r>0$ sufficiently large such that
    $$
    rs\geq\left(\frac{2+L}{2\sqrt{\underline{\delta}_r}}\right)s^{\frac{1}{p-1}} \qquad\text{for all}\,\,0\leq s\leq\overline{D}
    $$
    and
    $$
    \left(\frac{2+L}{2r\rho\underline{\delta}_r}\right)\overline{D}^{\frac{2}{p-1}-1}\leq 1.
    $$
    Then for all $k$ we have
    $$
    Df(x^k)[\gamma_{[x^k,y^k]}]=-r_k|D|f(x^k)<-r|D|f(x^k)\leq-\left(\frac{2+L}{2\sqrt{\delta}}\right)|D|f(x^k)^{\frac{1}{p-1}}
    $$
    as well as
    $$
    \alpha_k=\left(\frac{2+L}{2\delta}\right)\frac{|D|f(x^k)^{\frac{2}{p-1}-1}}{r_kd(x^k,y^k)}<\left(\frac{2+L}{2r\rho\delta}\right)|D|f(x^k)^{\frac{2}{p-1}-1}\leq 1.
    $$
    Thus $I^k_{\mathrm{ad}}\neq\emptyset$ and the rest follows from Lemma \ref{lm: ID method 1}.
\end{proof}
Next we state a more general accuracy assumption. We first point out that, under the assumptions of Theorem \ref{th: ID method 1},
$$
\varepsilon_k=(1-r_k)|D|f(x^k)\leq (1-r)\overline{D} \qquad\text{for all}\,\,k\in\bbN.
$$
Without the assumption on the sequence $(r_k)_{k\in\bbN}$ we directly assume that the sequence of absolute errors $(\varepsilon_k)_{k\in\bbN}$ is bounded above, i.e. $\varepsilon_k\leq\varepsilon$ for all $k\in\bbN$ with some small $\varepsilon>0$.
\begin{theorem}[almost steepest descent method: absolute error]\label{th: ID method 2}
    Let $(G,d)$ be boundedly compact, $p$-uniformly convex with $p\in(1,2]$. Assume that
    \begin{enumerate}[(i)]
        \item $f:G\to\mathbb{R}$ is level-bounded and has local uniform Lipschitz gradient along geodesics with constant $L>0$ on the convex subset $U$;
        \item $\mathrm{lev}_{\leq f(x^0)}(f)\subset \mathrm{gi}\,(U)$ with distance at least $\rho>0$ to the boundary of $U$: i.e. $\bigcup_{x\in \mathrm{lev}_{\leq f(x^0)}(f)} B_\rho(x)\subset \mathrm{gi}\,(U)$;
        \item in each step $k$ the step $y^k\in\mathcal{D}_k$ satisfies
        \begin{equation}\label{eq: ID method y^k 2}
            Df(x^k)[\gamma_{[x^k,y^k]}]=-r_k|D|f(x^k)\quad\text{and}\quad d(x^k,y^k)=\rho
        \end{equation}
        with rate $r_k\in(0,1]$;
        \item the sequence $(\varepsilon_k)_{k\in\bbN}$ given by \eqref{eq: abs error} is bounded from above, i.e. there exist $\varepsilon>0$ such that $0\leq\varepsilon_k\leq\varepsilon$ for all $k\in\bbN$.
    \end{enumerate}
    Then there exists a uniform constant $\underline{\delta}_\varepsilon>0$ such that for all $\delta\geq\underline{\delta}_\varepsilon$, the interval $I_{\mathrm{ad}}^k$ is nonempty for all $0\leq k\leq K-1$ for some $K\in\bbN$, and for any $\lambda_k\in I_{\mathrm{ad}}^k$ the sufficient decrease condition \eqref{eq: BD assm suff descent} holds with constant $\delta$ for all $0\leq k\leq K-1$. Furthermore, if $\mathrm{argmin}_U(f)\neq\emptyset$, the PŁ property with constant $C_U>0$ yields the error estimate
    $$
    f(x^K)-f_{\min}\leq\Bigl(f(x^0)-f_{\min}\Bigr)\left(1-\frac{C_U}{\delta}\right)^K.
    $$
\end{theorem}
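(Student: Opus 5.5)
The strategy is to reduce to the feasibility condition \eqref{eq: sd condition eq} and the admissibility condition $\alpha_k\le 1$, exactly as in Lemma \ref{lm: ID method 1}. The relative factor $r_k$ is replaced by the absolute error bound $\varepsilon_k\le\varepsilon$, and the argument is run only as long as the steepness $|D|f(x^k)$ stays above a threshold determined by $\varepsilon$. The iteration index $K$ in the statement will be the first index at which this threshold is violated, or $K=\infty$ if it never is.

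First, as in the proof of Proposition \ref{pr: SD method}, I use level-boundedness, bounded compactness and Proposition \ref{pr: ex of SD direction} to get a uniform bound $|D|f(x)\le\overline{D}$ on $V:=\mathrm{lev}_{\le f(x^0)}(f)$. From \eqref{eq: ID method y^k 2} and \eqref{eq: abs error} we have
$$
Df(x^k)[\gamma_{[x^k,y^k]}]=-|D|f(x^k)+\varepsilon_k\le -|D|f(x^k)+\varepsilon .
$$
Fix a threshold, say $2\varepsilon$, and let $K$ be the first index with $|D|f(x^K)<2\varepsilon$, with $K=\infty$ if there is none. For $k<K$ we then have $|D|f(x^k)\ge 2\varepsilon$, hence $\varepsilon_k\le\tfrac12|D|f(x^k)$ and $r_k\ge\tfrac12$. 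With $r=\tfrac12$, choose $\underline{\delta}_\varepsilon:=\underline{\delta}_{1/2}$ as in \eqref{eq: ID method delta 1}--\eqref{eq: ID method delta 2}. These bounds depend only on $\overline{D}$, $L$, $\rho$, $p$, and not on $\varepsilon$ beyond the threshold.

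Next I argue by induction on $k<K$. If $x^k\in V$, Lemma \ref{lm: ID method 1} with $r=\tfrac12$ (note $r_k\ge r$ suffices, as in the proof of Theorem \ref{th: ID method 1}) gives $I^k_{\mathrm{ad}}\neq\emptyset$ and the sufficient decrease \eqref{eq: BD assm suff descent} for every $\lambda_k\in I^k_{\mathrm{ad}}$. In particular $f(x^{k+1})<f(x^k)$, so $x^{k+1}\in V$ and the induction continues.

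For the error estimate, assume $\mathrm{argmin}_U(f)\neq\emptyset$ and the PŁ property on $U\supset V$. I repeat the one-step argument of Theorem \ref{th: cvg func value} for $0\le k\le K-1$:
$$
f(x^{k+1})-f_{\min}\le\bigl(1-\tfrac{C_U}{\delta}\bigr)\bigl(f(x^k)-f_{\min}\bigr).
$$
Iterating $K$ times gives the stated bound. For this I need $C_U\le\delta$; this can be arranged by enlarging $\underline{\delta}_\varepsilon$, or by noting that PŁ combined with the decrease already forces $C_U/\delta\le 1$, since $f(x^{k+1})\ge f_{\min}$.

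The main obstacle is interpreting the statement's ``for some $K$'' so that it is neither vacuous nor false. Beyond index $K$ the error $\varepsilon_k$ may dominate $|D|f(x^k)$, and then $y^k$ need not even be a descent direction. So $K$ must be defined as the stopping index above, and the statement should be read as: sufficient decrease holds for all $k<K$, and at $K$ one has $|D|f(x^K)<2\varepsilon$. If desired, the PŁ property then also yields $f(x^K)-f_{\min}\le C_U^{-1}(2\varepsilon)^{2/(p-1)}$, which complements the estimate above.
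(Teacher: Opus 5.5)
Your proposal is correct and follows essentially the paper's argument. Both define $K$ as the first index at which the steepness drops below a threshold strictly above $\varepsilon$, verify feasibility \eqref{eq: sd condition eq} and $\alpha_k\le 1$ for the earlier steps while staying in the level set, and then iterate the one-step PŁ contraction from Theorem \ref{th: cvg func value}. The only difference is the choice of threshold: the paper keeps a general $s^*>\varepsilon$ and builds $\underline{\delta}_\varepsilon$ directly from inequalities in $s-\varepsilon$, so $s^*$ may approach $\varepsilon$ at the cost of a larger, $\varepsilon$-dependent $\underline{\delta}_\varepsilon$. Your choice $s^*=2\varepsilon$ instead reduces everything to Lemma \ref{lm: ID method 1} with $r=\tfrac12$ and yields a $\underline{\delta}$ independent of $\varepsilon$.
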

\begin{proof}
    Again, there is a uniform upper bound
    $$
    |D|f(x)\leq\overline{D}<+\infty \qquad\text{for all}\,\,x\in V:=\mathrm{lev}_{\leq f(x^0)}(f).
    $$
    Now we fix a $\underline{\delta}_\varepsilon>0$ sufficiently large such that
    \begin{equation}\label{eq: ID method delta 3}
        s-\varepsilon\geq\left(\frac{2+L}{2\sqrt{\underline{\delta}_\varepsilon}}\right)s^{\frac{1}{p-1}} \qquad\text{for all}\,\,\varepsilon<s^*\leq s\leq\overline{D}
    \end{equation}
    as well as
    \begin{equation}\label{eq: ID method delta 4}
        \left(\frac{2+L}{2\rho\underline{\delta}_\varepsilon}\right)\frac{s^{\frac{2}{p-1}}}{s-\varepsilon}\leq 1 \qquad\text{for all}\,\,\varepsilon<s^*\leq s\leq\overline{D}
    \end{equation}
    for some $s^*>0$.
    Then for $\underline{\delta}_\varepsilon$ satisfying \eqref{eq: ID method delta 3} and \eqref{eq: ID method delta 4} we have
    $$
    Df(x^0)[\gamma_{[x^0,y^0]}]=-|D|f(x^0)+\varepsilon_k\leq-|D|f(x^0)+\varepsilon\leq-\left(\frac{2+L}{2\sqrt{\underline{\delta}_\varepsilon}}\right)|D|f(x^0)^{\frac{1}{p-1}}
    $$
    as well as
    $$
    \alpha_0=\left(\frac{2+L}{2\rho\underline{\delta}_\varepsilon}\right)\frac{|D|f(x^0)^{\frac{2}{p-1}}}{|D|f(x^0)-\varepsilon_k}\leq\left(\frac{2+L}{2\rho\underline{\delta}_\varepsilon}\right)\frac{|D|f(x^0)^{\frac{2}{p-1}}}{|D|f(x^0)-\varepsilon}\leq 1
    $$
    provided that $\varepsilon<s^*\leq|D|f(x^0)\leq\overline{D}$. Thus $I_{\mathrm{ad}}^0\neq\emptyset$ and choosing $\lambda_0\in I_{\mathrm{ad}}^0$ yields
    $$
    f(x^1)-f(x^0)\leq-\frac{1}{\underline{\delta}_\varepsilon}|D|f(x^0)^{\frac{2}{p-1}}.
    $$
    In particular,  $f(x^1)<f(x^0)$ as well as $x^1\in V$. Assume that we can provide these steps until step $K-1$, i.e.
    $$
    \varepsilon<s^*\leq|D|f(x^k)\leq\overline{D} \qquad\text{for all}\,\,0\leq k\leq K-1.
    $$
    After choosing $\lambda_{K-1}\in I_{\mathrm{ad}}^{K-1}$, we have
    $$
    f(x^{k+1})-f(x^k)\leq-\frac{1}{\delta}|D|f(x^k)^{\frac{2}{p-1}} \qquad \text{for all}\,\,0\leq k\leq K-1.
    $$
    Furthermore, the proof of Theorem \ref{th: cvg func value} yields the error estimate
    $$
    f(x^K)-f_{\min}\leq\Bigl(f(x^0)-f_{\min}\Bigr)\left(1-\frac{C_U}{\delta}\right)^K.
    $$
\end{proof}
\begin{remark}
    The constant $s^*$ and the step number $K$ depend on the upper bound $\varepsilon$, yet there is no known explicit expression. But $|s^*-\varepsilon|$ can be arbitrarily small, e.g., $s^*$ solves the equation
    $$
    s^*-\varepsilon=\left(\frac{2+L}{2\sqrt{\underline{\delta}_\varepsilon}}\right)(s^*)^{\frac{1}{p-1}}
    $$
    It follows that $|D|f(x^K)\sim\varepsilon$. Additionally, if the function $f$ is strongly convex with constant $\mu>0$,
    by Proposition \ref{pr: dir dervt of cvx fct} and the proof of Corollary \ref{cl: cvg ite for strongly cvx func}
    $$
    d(x^K,\bar{x})\leq\sqrt[p]{\frac{2}{\mu}\Bigl(f(x^0)-f_{\min}\Bigr)}\left(\sqrt[p]{1-\frac{C_U}{\delta}}\right)^{K}.
    $$
\end{remark}
\section{Examples}\label{s:examples}
Before moving to concrete examples, it is worth noting that computing a steepest descent direction could be
just as hard as solving the optimization problem to which this is to be applied.
Consider the problem of finding the minimum of a sum of functions,
\begin{equation}\label{eq:min sum}
 \min_{x\in G}\sum_{j=1}^m f_j(x),
\end{equation}
For $f(x)\equiv \sum_{j=1}^m f_j(x)$, and $f_j$ directionally
differentiable on a neighborhood of the point $x$,
by the sum rule, Proposition \ref{pr: sum rules}, adapted to the normalized directional derivative, 
\begin{equation}\begin{aligned}
                    Df(x)[\gamma] &= D\left(\sum_{j=1}^m f_j(x)\right)[\gamma]=\sum_{j=1}^m Df_j(x)[\gamma],\quad \gamma\in G_x^\circ.
                \end{aligned}
\end{equation}
Then the problem of computing the steepest descent direction, \eqref{eq: steepest descent}, is
\begin{equation}\label{eq:sd sum}
 \min_{\gamma\in G_x^\circ}\sum_{j=1}^m Df_j(x)[\gamma].
\end{equation}
Because the space $(G_x, d_x)$ defined by \eqref{eq:(G_x,d_x)} is of the same type as $(G, d)$,
the problem of computing the steepest descent is the same as the original problem \eqref{eq:min sum},
with the exception that the domain over which the optimization is performed is {\em not closed} because trivial geodesics are excluded.
Proposition \ref{pr: ex of SD direction}, nevertheless, provides guarantees for existence of solutions to \eqref{eq:sd sum}.

\subsection{Fr\'echet means}
All of the assumptions of the theory can be verified explicitly for the concrete case of Fr\'echet means.  
This example also demonstrates the point above:  to compute the steepest descent direction of the Fr\'echet 
function at a point $x$, one has to compute the Fr\'echet mean on a sphere centered at $x$.  

Indeed, in
a $p$ uniformly convex space with constants $p\in (1,\infty)$ and $c>0$, each
function in \eqref{eq:min sum} is given by $f_j(x)=d(x, x_j)^p$, and
\begin{equation}\label{eq:sd Frechet}
\begin{aligned}
        &Df_j(x)[\gamma_{[x,y]}]\\
        &\quad =\frac{df_j(x)[\gamma_{[x,y]}]}{d(x, y)}\\
        &\quad  = \lim_{t\searrow 0}\frac{d(\gamma_{[x,y]}(t), x_j)^p- d(x, x_j)^p}{d(x, y) t}\\
        &\quad  = \lim_{t\searrow 0}\frac{d((1-t)x\oplus t y, x_j)^p - d(x, x_j)^p}{d(x, y) t}\\
        &\quad  \leq \lim_{t\searrow 0}\frac{(1-t)d(x, x_j)^p + td(y, x_j)^p - \frac{c}{2}(1-t)td(x,y)^p - d(x, x_j)^p}{d(x, y) t}\\
        &\quad   = \frac{1}{d(x, y) }\lim_{t\searrow 0} \left(d(y, x_j)^p - d(x, x_j)^p - \frac{c}{2}(1-t)d(x,y)^p\right)\\
        &\quad  = \frac{1}{d(x, y)  } \left(d(y, x_j)^p - d(x, x_j)^p - \frac{c}{2}d(x,y)^p\right),
\end{aligned}
\end{equation}
where the inequality follows from the definition of a $p$-uniformly convex space \ref{eq: (p,c)-space}.  The directional derivative
is therefore given directly in terms of the endpoint $y$ of directions $\gamma_{[x,y]}$.   Restricting the search for the steepest
descent direction over a sphere of radius $\rho$ around the point $x$, $\mathbb{S}_{x}(\rho)$, for $\rho$ small enough as in
Proposition \ref{pr: ex of SD direction} we have
$Df_j(x)[\gamma_{[x,y]}]\leq \frac{1}{\rho } \left(d(y, x_j)^p - d(x, x_j)^p - \frac{c}{2}\rho^p\right)$ for
all $y\in \mathbb{S}_{x}(\rho)$.  The minimum of the right-hand side on 
$\mathbb{S}_{x}(\rho)$ is simply the intersection of the geodesic $\gamma_{[x,x_j]}$ 
with the sphere.  The steepest descent direction for the function $f_j(x)=d(x, x_j)^p$
is also the geodesic $\gamma_{[x,x_j]}$.  So the steepest descent direction on the 
sphere coincides. 

For a sum of functions, consider the 
solution $\overline{y}$ to the constrained optimization problem
\begin{equation}\label{eq:sd sum 2}
 \min_{y\in \mathbb{S}_x(\rho)}\sum_{j=1}^m  d(y, x_j)^p.
\end{equation}
This is a Fr\'echet mean problem restricted to the sphere of radius $\rho$ centered at $x$.
This provides and upper bound on the value of the normalized directional derivative 
in the direction $\gamma_{[x,\overline{y}]}$, i.e.
\begin{equation*}
\begin{aligned}
\sum_{j=1}^m Df_j[\gamma_{[x,\overline{y}]}]\leq& 
\tfrac{1}{\rho}\sum_{j=1}^m \left(d(\overline{y}, x_j)- d(x, x_j)^p - \frac{c}{2}\rho^p\right)\\
\leq &
\tfrac{1}{\rho}\sum_{j=1}^m \left(d(y, x_j)- d(x, x_j)^p - \frac{c}{2}\rho^p\right) \quad\forall y\in \mathbb{S}_x(\rho).
\end{aligned}
\end{equation*} 
This demonstrates that estimating the steepest descent direction for the Fr\'echet 
function involves solving a 
problem in exactly the same form as the original problem of 
minimizing the Fr\'echet function.  

\subsection{Smooth functions in nonsmooth spaces}\label{sec: limitation of regularity}
While in classic linear spaces the class of functions with Lipschitz gradient
is rich enough that a huge number of practical applications can be modeled with such
functions, in a metric space the local topology may impose further restrictions.
We present a concrete construction of a space where the condition
of $C^1$ along geodesics uniquely determines the value of the directional
derivatives at a point;  in other words, only very special functions in the given
space are $C^1$ along geodesics.

Consider the space $\{a, b, c\} \times [0. 1]$ and define on it the semimetric
\begin{equation}\label{eq:definition-metric-mercedes}
  d((i, x), (j, y)) =
  \left\{\begin{array}{ll}
    |x - y|   & \text{ if } i = j                     \\
    1 - x + y & \text{ if } i = a \text{ and } j = b  \\
    1 - x + y & \text{ if } i = a \text{ and } j = c  \\
    x + 1 - y & \text{ if } i = b \text{ and } j = a  \\
    x + y     & \text{ if } i = b \text{ and } j = c  \\
    x + 1 - y & \text{ if } i = c \text{ and } j = a  \\
    x + y     & \text{ if } i = c \text{ and } j = b. \\
  \end{array}\right.
\end{equation}
The metric space $G$ is then defined as $\{a, b, c\} \times [0, 1] / d$. This has the
effect of identifying $(a, 1) = (b, 0) = (c, 0)$. Geometrically, this space is
represented in Figure~\ref{fig:mercedes}

\begin{figure}
    \centering
    \begin{tikzpicture}
        \node {$(a, 0)$}
        child { node {$(a, 1) = (b, 0) = (c, 0)$}
        child { node {$(b, 1)$}}
        child { node {$(c, 1)$}}
        };
    \end{tikzpicture}
  \caption{Representation of the space $\{a, b, c\} \times [0, 1] / d$.}\label{fig:mercedes}
\end{figure}

\begin{proposition}
  Let $(G, d)$ be the space defined by~\eqref{eq:definition-metric-mercedes}.
  The only functions $f:G \to \mathbb{R}$ that are in $C^1(G)$  are 
  functions with a critical point at $(a, 1) = (b, 0) = (c, 0)$. 
\end{proposition}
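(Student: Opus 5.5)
Write $o:=(a,1)=(b,0)=(c,0)$ for the branch point and use the arclength coordinates on the three legs. The plan is to show that if $f\in C^1(G)$ (in the sense $f\in C^1(\gamma_{[x,y]})$ for all distinct $x,y$ in the convex set $G$), then the one-sided derivatives of $f$ at $o$ along all three legs must vanish, so that $df(o)[\gamma_{[o,y]}]=0$ for every $y\in G$.

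\emph{Step 1: identify the geodesics through $o$.} There are three of them, each traversing two legs:
\begin{itemize}
\item $\gamma_{[(a,0),(b,1)]}$, running along leg $a$ and then leg $b$;
\item $\gamma_{[(a,0),(c,1)]}$, running along leg $a$ and then leg $c$;
\item $\gamma_{[(b,1),(c,1)]}$, running along leg $b$ and then leg $c$.
\end{itemize}
By the metric \eqref{eq:definition-metric-mercedes}, each has length $2$ and passes through $o$ at $t=1/2$, so these are the unique geodesics.

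Define the outgoing one-sided derivatives at $o$ along each leg, parametrized by distance from $o$:
\[
\alpha=\lim_{s\searrow0}\frac{f(a,1-s)-f(o)}{s},\qquad
\beta=\lim_{s\searrow0}\frac{f(b,s)-f(o)}{s},\qquad
\kappa=\lim_{s\searrow0}\frac{f(c,s)-f(o)}{s}.
\]
These exist because $f$ is $C^1$ along each of the geodesics above.

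\emph{Step 2: derivative continuity at the interior point $t=1/2$.} Continuity of $(f\circ\gamma)'$ at $t=1/2$ forces the left and right derivatives to agree. Along a geodesic that enters $o$ along one leg and leaves along another, this means the incoming derivative equals minus the outgoing derivative on the entering leg (the reversal identity \eqref{eq: dir grad inv dir}). Applying this to the three geodesics gives
\[
-\alpha=\beta,\qquad -\alpha=\kappa,\qquad -\beta=\kappa.
\]
The first two give $\beta=\kappa=-\alpha$. Substituting into the third gives $\alpha=-\alpha$, so $\alpha=0$, and hence $\beta=\kappa=0$.

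\emph{Step 3: conclude criticality.} By Proposition \ref{pr: C^1 implies diff} and Remark \ref{rm: truncation of geod}, every nontrivial $\gamma\in G_o^\circ$ is a subgeodesic of one of the three legs. Its derivative is therefore a nonnegative multiple of $\alpha$, $\beta$ or $\kappa$, so $df(o)[\gamma]=0$ for all $\gamma\in G_o$. This shows that $o$ is a critical point, in the sense of \eqref{eq: zero dervt} and hence of \eqref{eq: Fermat with dervt}.

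\emph{Main obstacle.} The delicate point is Step 2. The factor $1/(1-t_0)$ in \eqref{eq: df(z)} and the rescaling by the geodesic length $2$ must be tracked consistently so that all three relations carry the same constant. This is best handled by working throughout with the normalized derivatives $Df$.

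A subsidiary check is that the problem is not vacuous: functions such as $f(i,x)=(\text{distance to }o)^2$ satisfy the hypothesis. This is consistent with the statement, which asserts a necessary condition and not existence.
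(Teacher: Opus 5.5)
Your argument is correct and is essentially the paper's. Both restrict $f$ to the three legs and use $C^1$-regularity along the geodesics that cross the branch point $o$ to get a homogeneous linear system that forces the three one-sided derivatives at $o$ to vanish. The only difference is that you match the left and right derivatives at $t=1/2$ directly, while the paper takes limits $\varepsilon\downarrow 0$ of directional derivatives at nearby points.

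Your three relations $\alpha+\beta=\alpha+\kappa=\beta+\kappa=0$ are the correct ones. The paper's evaluation $df(a,1)[\gamma_{[(a,1),(a,0)]}]=f_\alpha'(1)$ drops a sign (by its own table it is $-f_\alpha'(1)$), which makes two of its six equations wrong. This does not affect the conclusion, because the remaining correct equations already force all three derivatives to vanish.
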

\begin{proof}
The claim is proved by showing that the directional derivative in all possible 
directions is zero at the point $(a, 1)$.   

Consider first the geodesics
  $\gamma_{[(a, 0), (a, 1)]}$, $\gamma_{[(b, 1), (b, 0)]}$, $\gamma_{[(c, 1), (c, 0)]}$ and use
  the property of being $C^1$ along geodesics to conclude that there are $C^1$ functions
  $f_\alpha, f_\beta, f_\gamma:[0, 1] \to \bbR$ such that
  \begin{equation*}
    f(i, x) =
    \left\{
      \begin{array}{ll}
        f_\alpha(x) & \text{ if } i = a  \\
        f_\beta(x) & \text{ if } i = b  \\
        f_\gamma(x) & \text{ if } i = c. \\
      \end{array}
    \right.
  \end{equation*}

  The next step is to compute the directional derivatives, expressing
  them using the derivatives of $f_\alpha,f_\beta$ and $f_\gamma$.
This computation yields
  \begin{equation*}
    df(i, x)[\gamma_{[(i, x), (j, y)]}] =
    \left\{
      \begin{array}{ll}
        f_\alpha'(x) (y - x)
        & \!\!\text{if } i = a \text{ and } j = a \text{ and } x \in [0, 1] \\
        f_\alpha'(x) (1 - x + y)
        & \!\!\text{if } i = a \text{ and } j = b \text{ and } x \in [0, 1) \\
        f_\alpha'(x) (1 - x + y)
        & \!\!\text{if } i = a \text{ and } j = c \text{ and } x \in [0, 1) \\
        - f_\beta'(x) (x + 1 - y)
        & \!\!\text{if } i = b \text{ and } j = a \text{ and } x \in (0, 1]  \\
        f_\beta'(x) (y - x)
        & \!\!\text{if } i = b \text{ and } j = b \text{ and } x \in [0, 1] \\
        - f_\beta'(x) (x + y)
        & \!\!\text{if } i = b \text{ and } j = c \text{ and } x \in (0, 1] \\
        - f_\gamma'(x) (x + 1 - y)
        & \!\!\text{if } i = c \text{ and } j = a \text{ and } x \in (0, 1] \\
        - f_\gamma'(x) (x + y)
        & \!\!\text{if } i = c \text{ and } j = b \text{ and } x \in (0, 1] \\
        f_\gamma'(x) (y - x)
        & \!\!\text{if } i = c \text{ and } j = c \text{ and } x \in [0, 1] \\
      \end{array}
    \right. .
  \end{equation*}

  The point $(a, 1) = (b, 0) = (c, 0)$ is of particular interest for the
  topology of its neighborhoods. There are three independent geodesics starting
  from this point, and evaluating the corresponding three directional derivatives yields
  \begin{align*}
    df(a, 1)[\gamma_{[(a, 1), (a, 0)]}] &= f_\alpha'(1) \\
    df(b, 0)[\gamma_{[(b, 0), (b, 1)]}] &= f_\beta'(0) \\
    df(c, 0)[\gamma_{[(c, 0), (c, 1)]}] &= f_\gamma'(0).
  \end{align*}

  In order to obtain the constraints on $f$, we evaluate the directional
  derivatives at points in a neighborhood of the critical point, along geodesics
  passing through this point
  \begin{align*}
    df(a, 1-\varepsilon)[\gamma_{[(a, 1-\varepsilon), (b, 1)]}] &= f_\alpha'(1-\varepsilon) (1 + \varepsilon) \\
    df(a, 1-\varepsilon)[\gamma_{[(a, 1-\varepsilon), (c, 1)]}] &= f_\alpha'(1-\varepsilon) (1 + \varepsilon) \\
    df(b, \varepsilon)[\gamma_{[(b, \varepsilon), (a, 0)]}]     &= - f_\beta'(\varepsilon) (1 + \varepsilon) \\
    df(b, \varepsilon)[\gamma_{[(b, \varepsilon), (c, 1)]}]     &= - f_\beta'(\varepsilon) (1 + \varepsilon) \\
    df(c, \varepsilon)[\gamma_{[(c, \varepsilon), (a, 0)]}]     &= - f_\gamma'(\varepsilon) (1 + \varepsilon) \\
    df(c, \varepsilon)[\gamma_{[(c, \varepsilon), (b, 1)]}]     &= - f_\gamma'(\varepsilon) (1 + \varepsilon).
  \end{align*}

  Since $f$ is $C^1$ along geodesics, we can take the limits as $\varepsilon \downarrow 0$ and
  we obtain the system of equations
  \begin{align*}
    f_\beta'(0) &=   f_\alpha'(1) \\
    f_\gamma'(0) &=   f_\alpha'(1) \\
    f_\alpha'(1) &= - f_\beta'(0) \\
    f_\gamma'(0) &= - f_\beta'(0) \\
    f_\alpha'(1) &= - f_\gamma'(0) \\
    f_\beta'(0) &= - f_\gamma'(0).
  \end{align*}

  This shows that $f_\alpha'(1) = f_\beta'(0) = f_\gamma'(0) = 0$ and that
  \begin{equation*}
    \forall j \in \{a, b, c\}, y \in [0, 1]\quad df(a, 1)[\gamma_{[(a, 1), (j, y)]}] = 0.
  \end{equation*}
  In other words, $f$ has a critical point at $(a,1)$ as claimed. 
\end{proof}

\begin{example}[Fr\'echet functions]
  In particular, consider solving a barycenter problem
  by minimizing $f:G \to \bbR$ defined by
\begin{equation*}
  f(i, x) = \alpha d((i, x), (\alpha, 0))^2 + \beta d((i, x), (b, 1))^2 + \gamma d((i, x), (b, 1))^2.
\end{equation*}
It is an interesting exercise to show that the only function that is $C^1$ along geodesics
has $\alpha = \beta = \gamma = 0$, that is, the zero function.
\end{example}
The counterexamples above should come as no surprise:  nonsmoothness of the
{\em space} prohibits nontrivial smooth variation of functions.  As long
the functions can be pieced together by local restrictions to finite dimensional
complete, locally compact inner metric spaces with bounded curvature (i.e. manifolds \cite[Theorem A]{Plaut92}),
then there should be sufficient modelling power for functions with local uniform Lipschitz gradient along geodesics. 
This indicates that {\em pointwise} smoothness at points where curvature is $-\infty$ could be a way forward 
for the analysis of algorithms that traverse such points.  
A systematic study
of such classes of functions is open.


\end{document}